\newtheorem{theorem}{Theorem}[section]
\newtheorem{proposition}[theorem]{Proposition}
\newtheorem{lemma}[theorem]{Lemma}
\theoremstyle{definition}    
\newtheorem{definition}[theorem]{Definition}
\theoremstyle{remark}
\newtheorem{remark}[theorem]{Remark}
\newtheorem{remarks}[theorem]{Remarks}
\newtheorem{example}[theorem]{Example}
\newcommand\A{\mathcal{A}}
\newcommand\M{\mathcal{M}}
\renewcommand{\L}{\mathcal{L}}
\renewcommand{\O}{\mathcal{O}}
\newcommand{\ca}{\mathcal}
\newcommand{\R}{\mathbb{R}}
\newcommand{\Z}{\mathbb{Z}}
\renewcommand{\P}{\ca{P}}
\newcommand\lie[1]{\mathfrak{#1}}
\newcommand{\h}{\lie{h}}
\newcommand{\g}{\lie{g}}
\newcommand{\on}{\operatorname}
\newcommand{\Aut}{ \on{Aut} } 
\newcommand{\Gau}{ \on{Gau} }
\newcommand{\gau}{ \mf{gau} }
\newcommand{\Ad}{ \on{Ad} }
\newcommand{\Hom}{ \on{Hom}}
\renewcommand{\ker}{ \on{ker}}
\newcommand{\SO}{ \on{SO}}
\newcommand{\da}{\dasharrow}
\newcommand\qu{/\kern-.7ex/} 
\newcommand{\hra}{\hookrightarrow}
\renewcommand{\d}{{\mathsf{d}}}
\newcommand{\ol}{\overline}
\newcommand\eps{\epsilon}
\newcommand{\f}{\frac}
\newcommand{\p}{\partial}
\renewcommand{\l}{\langle}
\renewcommand{\r}{\rangle}
\newcommand\hh{{\f{1}{2}}}
\newcommand{\ti}{\tilde}
\newcommand{\eeq}{\end{eqnarray*}}
\newcommand{\beq}{\begin{eqnarray*}}
\newcommand{\D}{\ca{D}}
\newcommand{\pr}{\on{pr}}
\newcommand{\wh}{\widehat}
\newcommand{\wt}{\widetilde}
\newcommand{\mf}{\mathfrak}
\renewcommand{\sl}{\mf{sl}}
\newcommand{\rra}{\rightrightarrows}
\newcommand{\Diff}{\on{Diff}}
\newcommand{\PSL}{\on{PSL}}
\newcommand{\PGL}{\on{PGL}}
\newcommand{\SL}{\on{SL}}
\newcommand{\RP}{\R\!\on{P}}
\renewcommand{\S}{\ca{S}}
\newcommand{\vir}{\mf{vir}}
\newcommand{\CC}{\mathsf{C}}
\newcommand{\ignore}[1]{}
\newcommand{\oz}{\mathsf{o}}
\newcommand{\gz}{\mathsf{g}}
\renewcommand{\subset}{\subseteq}
\newcommand{\DD}{\mathbb{D}}
\renewcommand{\supset}{\supseteq}
\newcommand{\vz}{\mathsf{v}}
\begin{document}
	\sloppy
	\title[Symplectic geometry of projective structures]{Symplectic geometry of projective structures\\ on surfaces with boundary}
	\author{Ahmadreza Khazaeipoul}
	\author{Eckhard Meinrenken}

\begin{abstract}	
For oriented surfaces $\Sigma$ with boundary, we consider the infinite-dimensional deformation space  of projective structures on $\Sigma$ with nondegenerate boundary, up to isotopies fixing the boundary.  We show that this space carries a natural symplectic structure, and is a Hamiltonian space for the symplectic groupoid integrating the 
Adler-Gelfand-Dikii-space of the boundary. 		
	\end{abstract}
	
	\maketitle
	\tableofcontents

\section{Introduction}
According to Adler \cite{adl:tra} and Gelfand-Dikii \cite{gel:fam}, the space $\ca{R}_n(S^1)$ of scalar differential operators on $S^1$
\begin{equation}\label{eq:agd}
 L=\sum_{i=0}^n a_i(t) \f{d^i}{d t^i}
\end{equation}
 with principal symbol $a_n=1$ and subprincipal symbol $a_{n-1}=0$ 
is naturally an infinite-dimensional Poisson manifold. Drinfeld-Sokolov \cite{dri:kdv} gave a construction of this Poisson structure  by Hamiltonian reduction from the  space of $G=\SL(n,\R)$-connections on $S^1$; this correspondence later evolved  into the Beilinson-Drinfeld theory of opers \cite{bei:op}. 

For $n=2$,  the space  $\ca{R}_2(S^1)$ is the space of  \emph{Hill operators}, and the Poisson structure is affine-linear.  It 
is identified with the Kirillov-Kostant-Souriau Poisson structure on the 
dual of the Virasoro Lie  algebra $\mf{vir}(S^1)$, restricted to the  affine subspace at 
level $1$ (see e.g. \cite{kir:orb}). For $n>2$, the 
Adler-Gelfand-Dikii structure is not affine-linear, but one still has an associated Lie algebroid.  An explicit integration to a symplectic groupoid 
\begin{equation} \label{eq:thegroupoid}
\S_n(S^1)\rra \ca{R}_n(S^1)
\end{equation}
was obtained in \cite{al:coad} for $n=2$ and \cite{kha:1} for $n>2$. To describe the underlying groupoid, note that a fundamental system of solutions of $L$ gives a 
nondegenerate quasi-periodic curve 
\begin{equation}
\gamma\colon \R\to S^{n-1},\ \ \ \gamma(t+1)=c\cdot \gamma(t)
\end{equation}
where $c\in G$ is the monodromy of $\gamma$. Conversely, any such curve is realized by some differential operator \eqref{eq:agd}.  
Let $\D_n(S^1)$ denote the space of all nondegenerate quasi-periodic curves
and let $\D_n^{[2]}(S^1)$ denote pairs of such curves that are related by isomonodromic 
deformation. The latter has a pair groupoid structure, 
and the groupoid \eqref{eq:thegroupoid} is realized as its quotient by $G$, 
\begin{equation}
\S_n(S^1)=\D_n^{[2]}(S^1)/G\rra \ca{R}_n(S^1)=\D_n(S^1)/G.
\end{equation}
In \cite{al:symteich}, it was shown that for a  compact oriented surface $\Sigma$ with boundary, 
the infinite-dimensional Teichm\"uller space 
\begin{equation}\label{eq:teich0}
\on{Teich}(\Sigma)=\on{Hyp}(\Sigma)/\on{Diff}_\oz(\Sigma,\p\Sigma)
\end{equation}
is a Hamiltonian Virasoro space at level $1$, with one copy of $\vir(S^1)$ for each boundary circle. Here the quotient is 
by diffeomorphisms fixing the boundary and isotopic to the identity, and the hyperbolic structures are required to have 
\emph{ideal boundary}: they exhibit the same boundary behaviour as for the boundary of  the closed Poincar\'e disk. The moment map 
\[ \Phi\colon \on{Teich}(\Sigma)\to \ca{R}_2(\p\Sigma)\]
takes the class of such a hyperbolic structure to the induced Hill operator on the boundary.  
 
The present article develops a similar picture for the case $n=3$.  Instead of  \eqref{eq:teich0}, we consider 
the deformation space
\begin{equation}
\mf{P}(\Sigma)=\on{Proj}(\Sigma)/\on{Diff}_\oz(\Sigma,\p\Sigma).
\end{equation}
Here $\on{Proj}(\Sigma)$ is the space 
of projective structures on $\Sigma$ with \emph{nondegenerate boundary}, meaning that the restriction of 
any projective chart to the boundary is a nondegenerate curve. By construction, the deformation space comes with a map to the Adler-Gelfand-Dikii space
\begin{equation}\label{eq:psimap}
\Psi\colon \mf{P}(\Sigma)\to \ca{R}_3(\p\Sigma).
\end{equation} 

For the case without boundary $\p\Sigma=\emptyset$, there is a rich and  well-developed theory of the deformation space, and in particular of its subspace $c\mathfrak{P}(\Sigma)$ of \emph{convex} projective structures.  Goldman and Choi \cite{choi:cla} proved that for genus 
$\mathsf{g}\ge 2$, the space 
$c\mathfrak{P}(\Sigma)$ is the Hitchin component of the representation variety of 
$G=\SL(3,\R)$, realizing it as a \emph{higher Teichm\"uller space}  \cite{hit:lie,foc:pro}. 
 In particular,  $\mathfrak{P}(\Sigma)$ is a symplectic manifold of dimension $(2\mathsf{g}-2)\dim G$.  Earlier work of Goldman \cite{gol:con} gave a  Fenchel-Nielsen type parametrization of these projective structures,  leading to explicit Darboux coordinates. The 
full deformation space $\mathfrak{P}(\Sigma)$  is well-understood as well  \cite{choi:con1,choi:con2}; it too has a natural symplectic structure \cite{gol:con,gol:symaff}. 

Our first result extends the construction of a symplectic structure to the case with boundary:  \bigskip

\noindent{\bf Theorem A:} 
For a compact, connected surface with non-empty boundary,  the deformation space $\mathfrak{P}(\Sigma)$ of projective structures with nondegenerate boundary  is naturally an infinite-dimensional symplectic manifold. 
\bigskip

Our second result interprets the boundary restriction \eqref{eq:psimap} as a moment map. 
The statement is particularly satisfying for 
 the deformation space  $c\mf{P}(\Sigma)$ of convex projective structures with positive hyperbolic boundary holonomies. 
 \bigskip

\noindent{\bf Theorem B:}
There is a well-defined action 
of the groupoid $\S_3(\p\Sigma)\rra \ca{R}_3(\p\Sigma)$ on $c\mf{P}(\Sigma)$, 
by variation of the boundary curves. This action is Hamiltonian, with moment 
map given by \eqref{eq:psimap}. 
\bigskip

On the full space $\mf{P}(\Sigma)$, one can consider \emph{small} variations of the  boundary curves. 
The resulting local groupoid action is still compatible with the symplectic structures, but need not extend to a global action. 
\smallskip

The constructions from \cite{al:symteich} where motivated by recent work on JT gravity \cite{mal:con,saa:jt,sta:jt}, and notably by the concept of hyperbolic structures with wiggly boundary. As we shall see, the picture of `wiggly boundaries' is equally prominent in the setting of projective structures. \smallskip

The organization of the paper is as follows. In Section \ref{sec:curves}, we review the theory of nondegenerate quasi-periodic curves,  their relation to the Adler-Gelfand-Dikii space, and Drinfeld-Sokolov reduction. The discussion will be coordinate-free, using abstract circles  rather than $S^1$. The case of $n=3$, with positive hyperbolic monodromy, is described in some detail. Section \ref{sec:projsurface} begins with an overview of the theory of projective structures on compact surfaces without boundary, and extends aspects of this theory to projective structures with nondegenerate boundary. As we shall explain, every convex projective 
structure with nondegenerate boundary and positive hyperbolic monodromy is obtained by attaching half-annuli
to a convex projective structures with \emph{geodesic} boundary; we may hence use Goldman's theory to obtain an explicit Fenchel-Nielsen type description of $c\mf{P}(\Sigma)$. Section \ref{sec:symplectic} contains our construction of a symplectic structure on the space $\mf{P}(\Sigma)$. Similar to \cite{al:symteich}, this symplectic 2-form is obtained by reduction  of an Atiyah-Bott symplectic structure on a suitable space of connections. Section \ref{sec:momentmap} establishes the moment map property. In Section \ref{sec:goldmantwists} we calculate Hamiltonian flows corresponding to  
`Goldman twist'  within our framework.

\bigskip
\noindent{\bf Acknowledgements.} It is a pleasure to thank Anton Alekseev, Boris Khesin, Nathanial Sagman, and Nicolau Cor\c{c}\~ao Saldanha for conversations 
related to this work.

\section{Nondegenerate curves and the Adler-Gelfand-Dikii space} \label{sec:curves}
Throughout, we denote by $\CC$ a compact, connected, oriented 1-manifold -- in other words, a circle but without a preferred choice of coordinate. 
Let $\wt{\CC}$ be a choice of universal cover, so that $\CC=\wt{\CC}/\Z$. The deck transformation corresponding to $1\in \Z$ will be denoted $\kappa\colon\wt{\CC}\to \wt{\CC}$ (moving in the positive direction).
\subsection{Quasi-periodic curves}\label{subsec:quasiperiodic}
Let $G$ be a Lie group and $X$ a homogeneous space for $G$. A curve 
$\gamma\colon 	\wt\CC\to X$
is called \emph{quasi-periodic} if there exists an element $\mu(\gamma)\in G$, called its 
\emph{monodromy}, 
 such that 
\[ \gamma(\kappa(t))=\mu(\gamma)\cdot \gamma(t)\] 
for all $t$. The group $G$ acts on the space of quasi-periodic curves by $(a\cdot\gamma)(t)=a\cdot \gamma(t)$, 
with the monodromy transforming according to  $\mu(a\cdot \gamma)=a\,\mu(\gamma)\, a^{-1}$. 

We shall find it convenient to think of quasi-periodic curves in terms of connections and parallel transport. 
Associated with any quasi-periodic curve is a triple 
\[ (Q,\theta,\tau)\] 
of a principal $G$-bundle $Q\to \CC$ with a connection $\theta$ and a $G$-equivariant map $\tau\colon Q\to X$. 
Here
	\[ Q=(\wt{\CC}\times G)/\sim,\ \ \ \  (t,h)\sim (\kappa(t),\mu(\gamma)h),\]
with connection $\theta$ induced from the trivial connection on $\wt\CC\times G$, and 
\[ \tau\colon Q\to X,\ \  \tau([(t,h)])= h^{-1}\cdot \gamma(t).\] 
Conversely, given such a triple, the choice of a $\theta$-horizontal lift $s\colon\wt\CC\to Q$ gives quasi-periodic paths by composition, $\gamma=\tau\circ s$. As a simple application, we have: 
%

\begin{lemma}[Local sections of monodromy map]\label{lem:deform}
Given a quasi-periodic path $\gamma\colon \wt{\CC}\to X$, there is an open neighborhood $U\subset G$ of $\mu(\gamma)$
and a smooth map $f\colon U\times \wt{\CC}\to X$  such that each $f(h,\cdot)$ is quasi-periodic of monodromy $h$, and 
$f(\mu(\gamma),\cdot)=\gamma$. 
\end{lemma}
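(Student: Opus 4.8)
The plan is to use the connection-theoretic description of quasi-periodic curves that was just set up, and to make everything move smoothly with $h$ by trivializing the bundle over the (contractible) universal cover. Fix the given quasi-periodic curve $\gamma$ with monodromy $c=\mu(\gamma)$, and recall from the discussion above that $\gamma=\tau\circ s$ for the bundle $Q=(\wt\CC\times G)/\!\sim$ with $(t,h)\sim(\kappa(t),ch)$, equipped with the flat connection induced from the trivial one, where $s$ is a $\theta$-horizontal lift. Concretely, pulling back to $\wt\CC$ one may write $\gamma(t)=g(t)^{-1}\cdot x_0$ for a suitable smooth map $g\colon\wt\CC\to G$ and basepoint $x_0\in X$, with the quasi-periodicity $\gamma(\kappa(t))=c\cdot\gamma(t)$ encoded by the relation $g(\kappa(t))^{-1}c=g(t)^{-1}$ up to the stabilizer of $x_0$; more simply, since $\wt\CC$ is simply connected we can just directly build the deformation without solving an ODE.

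First I would choose a fundamental domain $[t_0,\kappa(t_0)]$ for the $\Z$-action on $\wt\CC$ and a smooth bump/interpolation function $\chi\colon\wt\CC\to[0,1]$ supported near one end, with $\chi\equiv 0$ near $t_0$ and $\chi\equiv 1$ near $\kappa(t_0)$, extended appropriately; this lets me interpolate between "monodromy $c$" and "monodromy $h$". Second, for $h$ in a small neighborhood $U$ of $c$, write $h=b_h\,c$ with $b_h$ close to the identity depending smoothly on $h$ (e.g.\ $b_h=hc^{-1}$), and set
\[
f(h,t)=a_h(t)\cdot\gamma(t),\qquad a_h(t)=\exp\!\big(\chi(t)\log b_h\big)
\]
defined on the fundamental domain and then propagated to all of $\wt\CC$ by the rule forcing monodromy $h$. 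More precisely I would define $f(h,\cdot)$ first on $[t_0,\kappa(t_0)]$ by the displayed formula (so $f(h,t_0)=\gamma(t_0)$ and $f(h,\kappa(t_0))=b_h\cdot\gamma(\kappa(t_0))=b_h c\cdot\gamma(t_0)=h\cdot\gamma(t_0)=h\cdot f(h,t_0)$), and then extend to $\wt\CC$ by the required equivariance $f(h,\kappa(t))=h\cdot f(h,t)$. The endpoint matching just computed shows this extension is well defined and smooth across the translates of $t_0$; smoothness in $h$ is clear since $b_h$, $\log b_h$ and $\exp$ are smooth and $\chi$ is fixed. At $h=c$ we have $b_c=e$, hence $a_c(t)=e$ and $f(c,\cdot)=\gamma$, as required.

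The main technical point — really the only thing to check with care — is the gluing/smoothness across the lattice translates of the fundamental-domain endpoint, i.e.\ that the two local formulas for $f(h,\cdot)$ near $\kappa^k(t_0)$ agree to all orders. This is handled by the choice of $\chi$ being locally constant ($0$ near $t_0$, $1$ near $\kappa(t_0)$) near the gluing points, so that near $\kappa^k(t_0)$ the map $a_h$ is the constant $b_h^{\,k}$ (or $e$, depending on which side), and the equivariant extension is then literally given by translation of the previous piece; the two descriptions coincide on an overlap and are smooth. One should also shrink $U$ so that $\log b_h$ is defined (e.g.\ $U$ contained in the image under right translation by $c$ of a normal neighborhood of $e$ on which $\exp$ is a diffeomorphism), which is harmless. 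Alternatively, and perhaps more in the spirit of the preceding paragraph, one can phrase the same construction bundle-theoretically: the bundles $Q_h$ for varying $h$ are all isomorphic after pulling back to $\wt\CC$, the flat connections vary smoothly, horizontal lifts depend smoothly on the connection, and $\tau$ is fixed; composing gives the smooth family $f$. Either way there is no serious obstacle — the lemma is essentially a packaging statement — and I would present the explicit $\exp(\chi\log b_h)$ construction since it makes the smooth dependence on $h$ manifest.
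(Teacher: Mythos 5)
Your proof is correct, and it takes a more explicit route than the paper's. The paper argues connection-theoretically: it fixes the triple $(Q,\theta,\tau)$ attached to $\gamma$, chooses a smooth family of connections $\theta_h$ with $\theta_{\mu(\gamma)}=\theta$ whose holonomy at the base point $[(t_0,e)]$ is $h$, and sets $f(h,\cdot)=\tau\circ s_h$ for the corresponding horizontal lifts; the existence of such a family is asserted rather than constructed. Your primary construction deforms the curve itself, $f(h,t)=\exp\bigl(\chi(t)\log(h\,\mu(\gamma)^{-1})\bigr)\cdot\gamma(t)$ on a fundamental domain, extended by the forced equivariance $f(h,\kappa(t))=h\cdot f(h,t)$. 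The two proofs implement the same idea --- an interpolation of the monodromy supported in a fundamental domain --- but yours is carried out explicitly downstairs on $X$ rather than upstairs on the bundle, which makes the smooth dependence on $h$ manifest and in effect supplies the family of connections the paper's proof takes for granted; the price is the gluing check at the translates of $t_0$. Your endpoint computation $f(h,\kappa(t_0))=b_h c\cdot\gamma(t_0)=h\cdot f(h,t_0)$ and the observation that $\chi$ is locally constant near the gluing points (so the two local descriptions agree on an overlap, not merely at a point) are precisely the steps that need care, and they are handled correctly, as is the shrinking of $U$ so that $\log(h\,\mu(\gamma)^{-1})$ is defined. The bundle-theoretic alternative you sketch at the end is essentially the paper's proof verbatim.
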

\begin{proof}
Let $(Q,\theta,\tau)$ be the triple associated with $\gamma$. Fixing a  base point $t_0\in \wt{\CC}$, we have 
$\gamma=\tau\circ s$ with the unique  horizontal lift  $s\colon \wt{\CC}\to Q$ taking $t_0$ to $[(t_0,e)]$. 
	Choose a smooth family  $\theta_h,\ h\in U$ of connections, with $\theta_g=\theta$, such that the holonomy of $\theta_h$ with respect to $[(t_0,e)]$ is given by $h$. There is a unique $\theta_h$-horizontal lift $s_h\colon \wt{\CC}\to Q$ taking $t_0$ to $[(t_0,e)]$. Then 
	$f(h,\cdot)=\tau\circ s_h$ is the desired family of quasi-periodic paths. 
\end{proof}

\subsection{Nondegenerate curves}
%

We specialize to  $G=\SL(n,\R)$  acting on $X=S^{n-1}$. We shall think of the sphere as a quotient of $\R^n-\{0\}$ by positive scalars, and use 
projective notation $(x_1\colon \cdots \colon x_n)\in S^{n-1}$ for the equivalence class of $(x_1,\ldots,x_n)$.  
A quasi-periodic curve 
\begin{equation}\label{eq:gammacurve}
\gamma\colon 	\wt\CC\to S^{n-1}\end{equation} 
is (positive) \emph{nondegenerate} if it lifts to a curve 
$\ti\gamma\colon \wt\CC\to \R^n-\{0\}$ with the property that 
for any choice of oriented local coordinate $t$ 
the derivatives 
\begin{equation}\label{eq:derivatives}
\ti\gamma(t),\ti\gamma'(t),\ldots,\ti\gamma^{(n-1)}(t)\end{equation}
form an oriented basis of $\R^n$. This condition does not depend on the choice of coordinates or the choice of lift. 
The space of nondegenerate quasi-periodic curves will be denoted $\D_n(\CC)$; the monodromy map 
gives a $G$-equivariant map 
\begin{equation}\label{eq:monodromy}
\mu\colon \D_n(\CC)\to G.
\end{equation}

\begin{example}\label{ex:standardcurves}
	Let $\CC=S^1$, so $\wt{\CC}=\R$. For $c_1<\ldots<c_n$ with $\sum_i c_i=0$, the curve 
	\begin{equation}\label{eq:standardcurve}
	\gamma(t)=(e^{c_1 t}:\cdots:e^{c_n t})
	\end{equation}  	
	is positive nondegenerate, with monodromy the diagonal matrix $\on{diag}(e^{c_1},\cdots,e^{c_n})$.  Indeed, 
	taking $\ti{\gamma}(t)=(e^{c_1 t},\cdots,e^{c_n t})$,  
	the Wronskian determinant is given by $\det(\wt{\gamma},\wt{\gamma}',\ldots)=\prod_{i>j}(c_i-c_j)>0$. 
\end{example}

\begin{remarks}\label{rem:plusminus}
	\begin{enumerate}
		\item 
		We require \eqref{eq:derivatives} to be \emph{oriented} to ensure that the space $\D_n(\CC)$ is connected. 
		\item Let $\on{Fl}^+(\R^n)$ be the manifold of oriented flags in $\R^n$: sequences of oriented subspaces 
$E_i\subset \R^n$ with $\dim E_i=i$ (using the standard orientation for $i=0,n$). By Gram-Schmidt orthogonalization, $\on{Fl}^+(\R^n)\cong \SO(n)$. 
		A nondegenerate curve has a canonical lift 
		\begin{equation}\label{eq:gammalift}
		\begin{tikzcd}
		[column sep={5.5em,between origins},
		row sep={4em,between origins},]
		& \on{Fl}^+(\R^n) \arrow[d]     \\
		\wt{\CC} \arrow[ur, dashed, "\wh{\gamma}"] \arrow[r, "\gamma" '] & S^{n-1}
		\end{tikzcd}
		\end{equation}
		where $\wh{\gamma}(t)$ is the oriented flag whose  $i$-th subspace is spanned 
		by the derivatives \eqref{eq:derivatives} up to order $i-1$. 
	\item By the general discussion from Section \ref{subsec:quasiperiodic}, every $\gamma\in \D_n(\CC)$ determines a 
	triple $(Q,\theta,\tau)$ of a principal $G$-bundle with a connection and equivariant map $\tau\colon Q\to S^{n-1}$. The lift $\wh{\gamma}$ determines a lift of $\tau$ to a $G$-equivariant map 
	\begin{equation}\label{eq:taulift}
	\begin{tikzcd}
	[column sep={5.5em,between origins},
	row sep={4em,between origins},]
	& \on{Fl}^+(\R^n) \arrow[d]     \\
	Q \arrow[ur, dashed, "\wh{\tau}"] \arrow[r, "\tau" '] & S^{n-1}
	\end{tikzcd}
	\end{equation}
	\end{enumerate}
\end{remarks}

Let $Y_n(\CC)=\D_n(\CC)/\sim$
 be the set of components of the fibers of \eqref{eq:monodromy}: that is, paths up to isomonodromic deformations.  
 The quotient map will be denoted 
 \begin{equation}\label{eq:yspace}
 	q\colon  \D_n(\CC)\to Y_n(\CC).
 \end{equation}

 Using Lemma \ref{lem:deform}  to define local charts, we see that 
 $Y_n(\CC)$ is a finite-dimensional manifold, possibly non-Hausdorff, in such a way that 
 the monodromy descends to a local diffeomorphism $Y_n(\CC)\to G$. 

\begin{remarks}
	\begin{enumerate}
		\item For $n=2$, the space $Y_2(\CC)$ is a certain open subset of the universal cover of $\SL(2,\R)$ (see 
		Goldman \cite{gol:the} and Segal \cite{seg:uni}). 
		\item Nondegenerate quasi-periodic paths on spheres are much-studied 
		in the literature.  Little \cite{lit:nondeg} proved that there are exactly three homotopy classes of (positive) 
		nondegenerate \emph{loops} in $S^2$ 
		(see \cite[Section 2.3]{ovs:pro}). 
		The homotopy types of the space of nondegenerate loops on $S^2$ 
		 were computed by Saldanha \cite{sal:hom}. Khesin and B.~Shapiro \cite{khe:non} classified the components of the space of positive, nondegenerate paths $\gamma\colon [0,1]\to S^2$ with prescribed initial and final flags $\wh{\gamma}(0),\wh{\gamma}(1)$.  For spheres of higher dimensions, we mention in particular M.~Shapiro's   generalization of Little's theorem in \cite{sha:top}, and the work of 	  
	  B.~Shapiro and Saldanha \cite{sal:spa} on the homotopy types of spaces of nondegenerate curves.
	\end{enumerate}
\end{remarks}

\subsection{The Adler-Gelfand-Dikii space $\ca{R}_n(\CC)$}
The coordinate-free definition of the Adler-Gelfand-Dikii operators 
\eqref{eq:agd}
involves the space 
 $|\Omega|_\CC^r$ of $r$-densities on $\CC$. We take $\ca{R}_n(\CC)$ to be the space of linear differential operator of order $n$
\begin{equation}\label{eq:L}
	 L\colon |\Omega|_\CC^{\f{1-n}{2}}\to  |\Omega|_\CC^{\f{1+n}{2}},
\end{equation}
with principal symbol 
equal to $1$  and with sub-principal symbol 
equal to $0$. The degrees of the densities are such that these conditions make sense: the principal symbol is a scalar, 
and $L,L^*$ act between the same spaces. The space $\ca{R}_n(\CC)$  is an affine space, with linear differential operators of order $n-2$ as the underling vector space. 

To describe the  relationship with nondegenerate quasi-periodic curves \cite{khe:inf,ovs:sym}, 
we use the n-th order \emph{Wronskian},
\[ W_n\colon |\Omega|_{\wt\CC}^r\times\cdots \times |\Omega|_{\wt\CC}^r\to  |\Omega|_{\wt\CC}^{n(r-\f{1-n}{2})}\]
given in local coordinates  by 
\[  W_n(u_1,\ldots,u_n)=\det\left(\begin{array}{cccc}u_1&\cdots  &u_n\\
	\vdots  &\vdots &\vdots\\
	u_1^{(n-1)}&\cdots &u_n^{(n-1)}
\end{array}
\right).
\]
(The definition does not depend on the choice of coordinates.) Given $L\in \ca{R}_n(\CC)$, 
suppose $u_1,\ldots,u_n\in \Omega^{\f{1-n}{2}}(\wt\CC)$ is a fundamental system of solutions of the lifted operator $\wt{L}$ on $\wt{\CC}$, with Wronskian $W_n(u_1,\ldots,u_n)>0$. (Observe that for $r=\f{1-n}{2}$, the Wronskian is a scalar.)  By Abel's formula from ODE theory, it is in fact constant.  The equivalence class of this $n$-tuple of solutions,  
%
modulo multiplication by positive densities in $\Omega^{\f{1-n}{2}}(\wt\CC)$, 
is a  nondegenerate quasi-periodic curve 
\[ \gamma=(u_1:\ldots:u_n)\colon  \wt\CC\to S^{n-1}.\] 
The curve $\gamma$ is determined by $L$ up to the action of $G$. Conversely, 
every nondegenerate quasi-periodic curve, written as 
$\gamma=(u_1:\cdots:u_n)$, determines $L\in \ca{R}_n(\CC)$ by the formula
\[ L(u)=(-1)^n \f{W_{n+1}(u,u_1,\ldots,u_n)}{W_n(u_1,\ldots,u_n)}.\] 
The resulting map
\begin{equation}\label{eq:p}
p\colon \D_n(\CC)\to \ca{R}_n(\CC)
\end{equation}
is the quotient map for the $G$-action on $\D_n(\CC)$. 

\begin{example} Let $\CC=S^1,\ \wt{\CC}=\R$. For distinct $c_1,\ldots,c_n\in \R$ with $\sum_i c_i=0$, 
	one has the operator 
	\[ L=\prod_{i=1}^n(\f{d}{d t}-c_i).\]
	The functions $u_i(t)=\exp(c_i t)$ are a fundamental system of solutions, defining the quasi-periodic 
	curve $\gamma(t)=(e^{c_1 t}:\ldots:e^{c_n t})$ from Example \ref{ex:standardcurves}.
\end{example}

The infinite-dimensional affine space $\ca{R}_n(\CC)$ carries the \emph{Adler-Gelfand-Dikii Poisson structure} \cite{khe:inf}. According to Khesin and Ovsienko \cite{ovs:sym}, the space of symplectic leaves of $\ca{R}_n(\CC)$ is identified with the space of $G$-orbits in $Y_n(\CC)$. 
This correspondence 
  was enhanced in \cite{al:coad} (for $n=2$) and 
\cite{kha:1} (for $n>2$) to a Morita equivalence of quasi-symplectic groupoids 
\begin{equation}\label{eq:moritaequivalence}
\begin{tikzcd}
[column sep={5.5em,between origins},
row sep={4em,between origins},]
  \ca{S}_n(\CC)   \arrow[d,shift left]\arrow[d,shift right]   & \D_n(\CC) \arrow [dr, "{q}"'] \arrow[dl, "p"]  & G\ltimes Y_n(\CC)   \arrow[d,shift left]\arrow[d,shift right] \\
\ca{R}_n(\CC) & & Y_n(\CC)
\end{tikzcd}
\end{equation}
Here $G\ltimes Y_n(\CC)\rra Y_n(\CC)$ is the action groupoid, with 2-form obtained by pullback of the 2-form on the quasi-symplectic groupoid $G\ltimes G$ integrating the Cartan-Dirac structure on $G$ (see \cite{al:coad}).  
On the other hand, 
$\S_n(\CC)\rra \ca{R}_n(\CC)$ is the quotient
\[ \S_n(\CC)=\D_n^{[2]}(\CC)/G\]
of the  groupoid
\begin{equation} \D^{[2]}_n(\CC)=\D_n(\CC)\times_{Y_n(\CC)} \D_n(\CC)\end{equation}
consisting of pairs $(\gamma_1,\gamma_0)$ of  paths that are homotopic through  
paths in $\D_n(\CC)$ with fixed monodromy. 
The space 
$\D_n(\CC)$ carries a distinguished $G$-invariant  2-form 
$\varpi_\D$, constructed in \cite{al:coad,kha:1}. The difference 
\[ \pr_1^*\varpi_\D-\pr_2^*\varpi_\D\] 
descends to a closed multiplicative 2-form $\omega_\S$ on
$\ca{S}_n(\CC)$, which is shown to be nondegenerate. In this paper, we will regard the symplectic groupoid $(\S_n(\CC),\omega_\S)$ as an alternative description of the Adler-Gelfand-Dikii Poisson structure, avoiding any discussion of Poisson brackets.  	

\begin{remark}\label{rem:inclusion}
There is a canonical affine-linear inclusion, equivariant for the action of $\on{Diff}_+(\CC)$, 
\begin{equation}
\ca{R}_2(\CC)\hra \ca{R}_3(\CC)
\end{equation}	
with image the subspace of operators $L\in \ca{R}_3(\CC)$ with the property $L^*=-L$. It lifts to
an  inclusion $\D_2(\CC)\hra \D_3(\CC)$, equivariant for a Lie group morphism 
$\SL(2,\R)\hra \SL(3,\R)$. In coordinates, the embedding is given by 
\[ \f{d^2}{d t^2}+q(t)\ \ \mapsto\ \ L=\f{d^3}{d t^3}+q(t)\f{d}{d t}+\hh q'(t),\]
and its lift by composition of quasi-periodic curves $\gamma\colon \wt{\CC}\to S^1$  
with the Veronese map $S^1\to S^2,\ (u_1:u_2)\mapsto (u_1^2:u_1u_2:u_2^2)$. For details, see \cite[page 14]{ovs:pro}. 
\end{remark}

\subsection{Drinfeld-Sokolov}
We shall use the  Drinfeld-Sokolov construction to interpret $\ca{R}_n(\CC)$ as a quotient of a certain space of 
principal connections. Our starting point is the choice of a principal $G$-bundle $Q\to \CC$, a $G$-equivariant map 
$\tau\colon Q\to S^{n-1}$,  and a $G$-equivariant lift $\wh{\tau}\colon Q\to \on{Fl}^+(\R^n)$ (cf. \eqref{eq:taulift}). 
Principal connections $\theta\in\A(Q)$ give quasi-periodic curves
by composition $\gamma=\tau\circ s$ where $s\colon \wt{\CC}\to Q$ is a $\theta$-horizontal lift of the quotient map $\wt{\CC}\to \CC$. If this curve $\gamma$ is non-degenerate, it  determines a $G$-equivariant lift (depending on $\theta$)
\begin{equation}\label{eq:whtautheta}
\wh{\tau}_\theta\colon Q\to \on{Fl}^+(\R^n).\end{equation}
Under gauge transformations by $u\in \Gau(Q)$, $\wh{\tau}_{u\cdot \theta}= \wh{\tau}_\theta\circ u^{-1}$.

\begin{definition}\label{def:taupositive}
A connection $\theta\in\A(Q)$ is \emph{$\wh\tau$-positive} if the map $\tau$ restricts to  nondegenerate curves on the horizontal leaves of $\theta$, and the lifted map $ \wh{\tau}_\theta$ is homotopic to $\wh{\tau}$. 
Let $\A^{\wh\tau}(Q)$ be the space of $\wh\tau$-positive connections. 
\end{definition}

Note that this is an open condition: Small perturbations of $\wh\tau$-positive connections are again $\wh\tau$-positive. 
The lift $s\colon \wt{\CC}\to Q$, and hence the curve $\gamma\in \D_n(\CC)$, are defined only up the action of $G=\SL(n,\R)$, but the  resulting operator $L$ does not depend on the choice.  This defines a quotient map 
\begin{equation}\label{eq:gotohill}
\A^{\wh{\tau}}(Q)\to \ca{R}_n(\CC)\end{equation}
Let $\Gau(Q,\tau)$ be the group 
of  gauge transformations $u\colon Q\to Q$ fixing $\tau$, i.e., 
$\tau\circ u^{-1}=\tau$, and let 
$\Gau_\oz(Q,\tau)$ be its identity component. The following is a version of the Drinfeld-Sokolov theorem: 

\begin{proposition}[Drinfeld-Sokolov]\label{prop:ds}
	The action of $\Gau_\oz(Q,\tau)$ on $\A^{\wh{\tau}}(Q)$ is free, with \eqref{eq:gotohill} as its quotient map. 
	This action admits a global section $\ca{R}_n(\CC)\to \A^{\wh{\tau}}(Q)$.
\end{proposition}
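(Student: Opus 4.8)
The plan is to reduce the statement to the classical Drinfeld-Sokolov gauge-fixing argument by trivializing everything over the universal cover. First I would fix a base point and a horizontal lift to identify a $\wh\tau$-positive connection $\theta$ with the data of its holonomy together with a nondegenerate quasi-periodic curve $\gamma = \tau\circ s\colon\wt\CC\to S^{n-1}$; by the homotopy condition in Definition \ref{def:taupositive}, the induced flag lift $\wh\tau_\theta$ agrees with $\wh\gamma$ up to homotopy, so $\gamma$ genuinely lies in $\D_n(\CC)$ (in the prescribed component), and the associated operator $L\in\ca{R}_n(\CC)$ is the image of $\theta$ under \eqref{eq:gotohill}. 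Next I would observe that $\Gau_\oz(Q,\tau)$ acts on $\A^{\wh\tau}(Q)$ and, via the above identification, corresponds exactly to reparametrizing $\gamma$ by the pointwise $G$-action in a way that fixes $\tau$ — i.e.\ to the gauge transformations that, in the $\wt\CC\times G$ trivialization, take values in the isotropy at each point. Because the isotropy group of a point in $S^{n-1}=\SL(n,\R)/P_1$ is a parabolic $P$, and the flag $\wh\gamma(t)$ is a full oriented flag at every $t$, the relevant gauge transformations are those preserving the full flag pointwise, i.e.\ valued in a Borel; this is the precise source of the freeness.

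The heart of the argument is the existence of the global section. Here I would run the standard Drinfeld-Sokolov normal-form computation in the Frenet-type frame: given $L\in\ca{R}_n(\CC)$, pick the (up to positive density, canonical) fundamental system $u_1,\dots,u_n$ with $W_n=1$, form the curve $\gamma=(u_1:\cdots:u_n)$, lift it to $\ti\gamma=(u_1,\dots,u_n)\in\R^n-\{0\}$, and use the oriented moving frame built from $\ti\gamma,\ti\gamma',\dots,\ti\gamma^{(n-1)}$ — which is exactly the data of $\wh\gamma(t)\in\on{Fl}^+(\R^n)$ — to manufacture a connection on $Q$ whose horizontal lift recovers $\gamma$ and whose holonomy is the monodromy $\mu(\gamma)$. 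Concretely, after normalizing the frame by Gram-Schmidt one lands in $\SO(n)$, and the companion-type matrix encoding $L$ in this frame gives a globally well-defined $\lie{g}$-valued connection 1-form on $Q$ (the ambiguity of $\ti\gamma$ by a positive density, and of the frame by the Gram-Schmidt normalization, both cancel because $L$ determines $\gamma$ canonically up to the $G$-action, which is absorbed by the principal bundle structure). This connection is $\wh\tau$-positive by construction since its horizontal leaves map to $\gamma$ and $\wh\tau_\theta=\wh\gamma\simeq\wh\tau$, and \eqref{eq:gotohill} sends it back to $L$; so it is a section.

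To complete freeness and the quotient-map claim, I would argue: if $u\in\Gau_\oz(Q,\tau)$ fixes a $\wh\tau$-positive $\theta$, then in the trivialization $u$ is a quasi-periodic $G$-valued map fixing both $\gamma$ and its parallel transport, hence fixing the full flag $\wh\gamma(t)$ at every $t$; a connected group of flag-preserving transformations is unipotent upper-triangular, and one sees from the horizontality that such a $u$ must be constant and hence the identity (or: the stabilizer of a nondegenerate curve in the relevant gauge group is trivial, cf.\ the transitivity of $\SL(n,\R)$ on oriented flags). For surjectivity of \eqref{eq:gotohill} onto fibers being single orbits: two $\wh\tau$-positive connections with the same image $L$ give curves $\gamma_0,\gamma_1$ that are $G$-translates of one another (by the last part of the Wronskian discussion in the excerpt), and translating the horizontal lift back produces the required gauge transformation, which lies in the identity component because $\A^{\wh\tau}(Q)$ is connected (being an open subset cut out by the fixed homotopy class) and the section provides a path.

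The main obstacle I anticipate is the well-definedness and global regularity of the section: one must check that the Gram-Schmidt / Frenet normalization of the canonical frame of $(u_1,\dots,u_n)$ produces a connection 1-form that is smooth and independent of all the auxiliary choices (the positive-density ambiguity in $u_i$, the choice of oriented coordinate $t$, and the residual $G$-ambiguity), and that it descends correctly to $Q\to\CC$ rather than merely to $\wt\CC$ — this is precisely where the constraints $a_n=1$, $a_{n-1}=0$, and $W_n>0$ get used, and it is the step that deserves a careful coordinate computation rather than a one-line appeal.
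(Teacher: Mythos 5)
Your overall strategy -- pass from connections to nondegenerate quasi-periodic curves, invoke the fundamental system of $L$ and the companion normal form to build the slice, and get freeness from triviality of the stabilizer of a nondegenerate curve -- is essentially the same route the paper takes, so most of your sketch can be made to work. But there is one genuine gap: the identity-component bookkeeping. Your argument for the ``quotient map'' claim produces, from two connections $\theta_0,\theta_1$ with the same image $L$, a gauge transformation $u\in\Gau(Q,\tau)$ relating them (via the $G$-translation $\gamma_1=a\cdot\gamma_0$ of the curves). The proposition, however, asserts that the fibers of \eqref{eq:gotohill} are orbits of $\Gau_\oz(Q,\tau)$, and $\pi_0(\Gau(Q,\tau))\cong\pi_0(LH)\cong\pi_1(\SO(n-1))$ is nontrivial. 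Your justification -- ``$u$ lies in the identity component because $\A^{\wh\tau}(Q)$ is connected and the section provides a path'' -- is circular: connectedness of $\A^{\wh\tau}(Q)$ does not tell you which component of $\Gau(Q,\tau)$ acts, unless you already know that every $\Gau_\oz(Q,\tau)$-orbit meets the slice, which is precisely what is at stake. This is exactly where the homotopy condition ``$\wh\tau_\theta$ homotopic to $\wh\tau$'' in Definition \ref{def:taupositive} must be used, and your proposal never engages with it beyond a passing (and slightly misstated) remark at the start. The paper resolves this by an explicit two-stage gauge fixing along the Iwasawa decomposition $H=(K\cap H)AN$: first a \emph{unique} transformation in $L_\oz(K\cap H)$ normalizing $\wh\tau_\theta$ to $\wh\tau$ exactly -- it lies in the identity component precisely because of the homotopy hypothesis -- and then a unique transformation in $LB$ (automatically in $\Gau_\oz$ since $B$ is contractible) reaching the companion form \eqref{eq:ds}. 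Both stages being manifestly in $\Gau_\oz(Q,\tau)$ is what delivers the statement.

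Two secondary points. First, your freeness argument asserts that ``a connected group of flag-preserving transformations is unipotent upper-triangular''; flag-preserving elements form a Borel, not a unipotent group, and in any case the acting group $\Gau(Q,\tau)\cong LH$ is valued in $H\supsetneq B$, not in a Borel. What is true, and what you should say, is that a gauge transformation fixing a $\wh\tau$-positive connection is covariantly constant and fixes the nondegenerate curve $\gamma$ pointwise; since the derivatives of a lift $\ti\gamma$ span $\R^n$ at each point and $\det=1$ kills the scalar ambiguity, such an element is the identity. Second, your route skips the intermediate characterization (the paper's Claim~2) of the connections with $\wh\tau_\theta=\wh\tau$ as those of Hessenberg type \eqref{eq:ds-step1} with positive subdiagonal entries $r_i>0$; this is what licenses the appeal to ``the standard Drinfeld--Sokolov normal-form computation'' and guarantees that the $LB$-gauge fixing stays inside $\A^{\wh\tau}(Q)$. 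You correctly flag the well-definedness of the Frenet-frame section as the delicate step, but note that if one uses the frame of derivatives $\ti\gamma,\ti\gamma',\dots,\ti\gamma^{(n-1)}$ (rather than its Gram--Schmidt orthonormalization, which does not produce the companion matrix), the slice is literally the affine family \eqref{eq:ds} and smoothness is immediate.
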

The proof involves the Iwasawa decomposition $G=KAN$. Here
$K$ is the rotation group $\SO(n)$, $A$ are the diagonal matrices with positive entries with product equal to $1$, and $N$ are the upper triangular matrices with $1$'s on the diagonal. The group $B=AN$ is the stabilizer of the 
standard flag $\wh{v}_0\in \on{Fl}^+(\R^n)$, with $i$-th subspace spanned by the standard basis vectors $e_1,\ldots,e_i\in \R^n$. The stabilizer $H\subset G$ of $v_0=e_1\in S^{n-1}$  is represented by 
matrices in $\SL(n,\R)$ 
whose first column is a positive multiple of $e_1$. Equivalently, 
\begin{equation}\label{eq:H}
H=(K\cap H) A N,\ \ K\cap H\cong \SO(n-1).
\end{equation}
Note that $\tau\colon Q\to S^{n-1}=G/H$ and its lift $
\wh\tau\colon Q\to \on{Fl}^+(\R^n)=G/B$ are equivalent to 
reductions of structure group of $Q$ to $H$ and $B$, respectively, with $Q_H=\tau^{-1}(eH)$ and $Q_B=\wh{\tau}^{-1}(eB)$.

\begin{proof}[Proof of Proposition \ref{prop:ds}]	
We may assume $\CC=S^1$. The choice of 
a trivialization of $Q_B\subset Q$ identifies $Q\cong S^1\times G$ in such a way that $\wh\tau$ is the map 
\[ \wh\tau\colon S^1\times G\to \on{Fl}^+(\R^n),\ \  (t,h)\mapsto h^{-1}\cdot \wh{v}_0.\]
The groups $\Gau(Q,\tau)\subset \Gau(Q)$ are the loop groups $LH\subset LG$, respectively. 
We will show how to put any given $\theta\in \A^{\wh{\tau}}(Q)$ into a normal form, using a suitable 
transformation in $\Gau_\oz(Q,\tau)$. 

{\bf Claim 1:} There exists a \emph{unique} gauge transformation in $L_\oz(K\cap H)\subset L_\oz H$ taking 
$\wh{\tau}_\theta$ to the `standard lift' $\wh{\tau}$. 
%
Indeed, under the identification $\on{Fl}^+(\R^n)\cong \on{\SO}(n)$, the map  $\wh{\tau}_\theta(\cdot,h)$ is a loop in 
$L_\oz \SO(n)=L_\oz K$, which actually is contained in $L_\oz (K\cap H)=L_\oz (\SO(n-1))$ (since it is a lift of $\tau$). The desired first gauge transformation is just the inverse of this loop.

Having thus arranged that 
$\theta$ is a $\wh\tau$-positive connection with the additional property $\wh{\tau}_\theta=\wh{\tau}$, we shall prove:

{\bf Claim 2:}  The  $\wh\tau$-positive connections with  $\wh{\tau}_\theta=\wh{\tau}$
are given by connection 1-forms $\mathsf{A}=\xi \d t$ with 
\begin{equation}\label{eq:ds-step1} \xi=\left(
\begin{array}{ccccc}
*&\cdots&\cdots&*&*\\
r_1&\cdots&\cdots&*&*\\
0&r_2&\cdots&*&*\\
\cdots&\cdots&\cdots&\cdots&\cdots\\
0&0&\cdots&*&*\\
0&0&\cdots&r_{n-1}&*
\end{array}
\right)\end{equation}
where $r_1,\ldots,r_{n-1}>0$. 
To prove this, let $\mathsf{A}=\xi \d t$ be the connection 1-form corresponding to $\theta$. A horizontal lift is of the form $s(t)=(t,h(t))$ where $h\colon \R\to G$ is a solution of 
\[ h'(t)h(t)^{-1}=-\xi(t).\] 
The corresponding curve in $\R^n-\{0\}$ is $\ti\gamma(t)=h(t)^{-1}e_1$. (Here we are writing the curve as \emph{column vectors}.) 
Our assumptions (nondegeneracy, and $\wh{\tau}_\theta=\wh{\tau}$) 
mean that the vectors
\[ v_{i}(t)=h(t)\ti{\gamma}^{(i-1)},\ \  i=1,\ldots,n\]
are linearly independent, and 
generate the standard oriented flag. That is, $v_i(t)$ lies in the span of $e_1,\ldots,e_i$, and the coefficient of $e_i$ is strictly 
positive. These vectors are recursively given by $v_1(t)=e_1$, and 
\[ v_{i+1}(t)=v_i'(t)+\xi(t)v_i(t).\]
It follows that $\xi(t)v_i(t)$ lies in the span of $e_1,\ldots,e_{i+1}$, with a strictly negative coefficient in front of 
$e_{i+1}$. By another induction, we hence see that $\xi(t)e_i$ (which is the $i$-th column of the matrix $\xi(t)$) has the same property. 
This proves Claim 2.

After this preparation, the 
standard Drinfeld-Sokolov theory  \cite{bei:op,khe:inf} gives is a \emph{unique} gauge transformation in $LB$ putting $\xi$ into the normal form 
\begin{equation}\label{eq:ds} \left(
\begin{array}{ccccc}
0&\cdots&\cdots&0&-a_0\\
1&\cdots&\cdots&0&-a_1\\
0&1&\cdots&0&-a_2\\
\cdots&\cdots&\cdots&\cdots&\cdots\\
0&0&\cdots&0&-a_{n-2}\\
0&0&\cdots&1&0
\end{array}
\right).\end{equation}
Here $a_0,\ldots,a_{n-2}$ are the coefficients of the operator $L$ corresponding to $\theta$. 
The set of matrices of this form defines an affine slice for the action. 
\end{proof}

\begin{remark}
	Claim 2 shows that the regular connections with 
	$\wh{\tau}_\theta=\wh{\tau}$ are exactly the Beilinson-Drinfeld oper connections \cite{bei:op} for the reduction of structure group $Q_B\subset Q$. 	
\end{remark}

\begin{remark}\label{rem:segal}
The construction of the slice in this proposition made use of coordinates. There is a \emph{distinguished} choice of the bundle $Q$, with reduction of structure group $Q_B$, for which the  \emph{Drinfeld-Sokolov slice} 
\begin{equation} \ca{R}_n(\CC)\hra \A^{\wh{\tau}}(Q)\end{equation} 
is completely canonical.  See the discussion in Segal's paper \cite{seg:geo} (cf.~ \cite{al:coad}). In a nutshell,  every  $L\in \ca{R}_n(\CC)$ defines a linear connection on the $(n-1)$-jet bundle $E=J^{n-1}(|\Lambda|_\CC^{\f{1-n}{2}})$
of the line bundle  of $\f{1-n}{2}$-densities. 
This bundle $E$ has a natural volume form (defined by the Wronskian), and a 
filtration given by the kernels of the maps from $(n-1)$-jets to $(n-k)$-jets. One takes 
$Q$ to be the associated principal $G$-bundle, with reduction of structure group defined by the filtration. 
\end{remark}

Recall that the space $\A(Q)$ of principal connections  is an affine space, equipped with an affine action of the gauge group $\Gau(Q)$. The choice of an invariant metric on $\g=\mf{\sl}(n,\R)$, which we shall take to be the trace form, $\xi_1\cdot \xi_2=\on{tr}(\xi_1\xi_2)$,
makes $\A(Q)$ into an infinite-dimensional Poisson manifold, and the action of $\Gau(Q)$ is Hamiltonian, with affine moment map the identity map of $\A(Q)$. The (affine) moment map for the action of the subgroup 
$\Gau_\oz(Q,\tau)$ is the  projection 
\begin{equation}\label{eq:aqtau}
 \A(Q,\tau)=\A(Q)/\on{ann}(\gau(Q,\tau)).
\end{equation}
\begin{proposition}\label{prop:singleorbit}
The image of $\A^{\wh{\tau}}(Q)$ under the moment map \eqref{eq:aqtau}
for the $\Gau_\oz(Q,\tau)$-action 
is a single $\Gau_\oz(Q,\tau)$-orbit 
\[ \O\subset \A(Q,\tau).\] 
The $\Gau_\oz(Q,\tau)$-action on this orbit is free. 
\end{proposition}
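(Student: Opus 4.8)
The plan is to reduce everything to the Drinfeld–Sokolov picture established in Proposition~\ref{prop:ds}. First I would observe that the moment map \eqref{eq:aqtau} is by construction $\Gau_\oz(Q,\tau)$-equivariant (it is an affine moment map for the affine $\Gau(Q)$-action, restricted to the subgroup), so its image is automatically a union of $\Gau_\oz(Q,\tau)$-orbits; the content is that the image of the \emph{open} set $\A^{\wh\tau}(Q)$ is a \emph{single} orbit, and that the action on it is free. For the single-orbit claim, the key point is that two $\wh\tau$-positive connections $\theta_0,\theta_1$ have the same image under \eqref{eq:aqtau} if and only if they differ by an element of $\Gau_\oz(Q,\tau)$ \emph{together with} an element of the annihilator — but the annihilator direction is already absorbed into the quotient, so in fact $\theta_0,\theta_1$ map to the same point of $\A(Q,\tau)$ iff $\theta_1 = u\cdot\theta_0$ for some $u\in\Gau_\oz(Q,\tau)$. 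Hence the image is a single orbit precisely because $\A^{\wh\tau}(Q)$ is a single $\Gau_\oz(Q,\tau)$-orbit over each fiber of \eqref{eq:gotohill} — and by the global section in Proposition~\ref{prop:ds}, $\A^{\wh\tau}(Q)\cong \Gau_\oz(Q,\tau)\times\ca{R}_n(\CC)$ as a $\Gau_\oz(Q,\tau)$-space, so its image in $\A(Q,\tau)$ is the image of $\{e\}\times\ca{R}_n(\CC)$ under a single orbit map — wait, that is not a single orbit unless $\ca{R}_n(\CC)$ maps to a point.

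Let me restate the argument correctly. The fibers of \eqref{eq:gotohill} are exactly the $\Gau_\oz(Q,\tau)$-orbits (Proposition~\ref{prop:ds}), and \eqref{eq:aqtau} is constant along $\Gau_\oz(Q,\tau)$-orbits only up to the annihilator piece — so the right statement is that \eqref{eq:aqtau} separates different fibers of \eqref{eq:gotohill} \emph{modulo} the $\gau(Q,\tau)$-annihilator directions, and the Drinfeld–Sokolov slice realizes $\ca{R}_n(\CC)$ as a transversal. Concretely: restrict the affine moment map $\A(Q)\to\A(Q)$, i.e.\ the identity, to the Drinfeld–Sokolov slice $\ca{R}_n(\CC)\hra\A^{\wh\tau}(Q)$; I claim the composite $\ca{R}_n(\CC)\to\A(Q)\to\A(Q,\tau)=\A(Q)/\on{ann}(\gau(Q,\tau))$ is \emph{constant}. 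This is the heart of the matter: the difference of two slice connections, in the normal form \eqref{eq:ds}, is an off-diagonal matrix-valued 1-form supported in the last column below the top entry, and I must check that such a difference lies in $\on{ann}(\gau(Q,\tau))^{\perp}$'s complement, i.e.\ pairs to zero against all of $\gau(Q,\tau)=L\h$ under the trace form pairing $\int_\CC\on{tr}(\cdot)$. Using $H=(K\cap H)AN$ and the explicit shape of the slice difference versus the explicit shape of $\h=\mf{so}(n-1)\oplus\a\oplus\n$, this is a direct computation in $\sl(n,\R)$: the slice difference has its only nonzero entries in positions $(i,n)$ for $i=1,\dots,n-1$, while $\h$ consists of block-upper-triangular matrices preserving the line $\R e_1$, and one checks $\on{tr}$ of the product vanishes identically. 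Granting this, the image of $\A^{\wh\tau}(Q)$ equals the $\Gau_\oz(Q,\tau)$-saturation of the single point $[\,\text{slice}\,]\in\A(Q,\tau)$, i.e.\ one orbit $\O$.

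For freeness of the $\Gau_\oz(Q,\tau)$-action on $\O$: a stabilizer of a point of $\O\subset\A(Q,\tau)$ pulls back, via the $\Gau_\oz(Q,\tau)$-equivariant map $\A^{\wh\tau}(Q)\to\A(Q,\tau)$, to a subgroup of $\Gau_\oz(Q,\tau)$ stabilizing the $\Gau_\oz(Q,\tau)$-orbit structure on the fiber; but $\A^{\wh\tau}(Q)\to\ca{R}_n(\CC)$ has $\Gau_\oz(Q,\tau)$-orbits as fibers and the action on each fiber is free by Proposition~\ref{prop:ds}. If $u\in\Gau_\oz(Q,\tau)$ fixes a point $[\theta]\in\O$, then $u\cdot\theta$ and $\theta$ have the same image in $\A(Q,\tau)$, hence (by the separation argument just above, applied to the two connections $\theta$ and $u\cdot\theta$, which both lie over the \emph{same} point of $\ca{R}_n(\CC)$ after pushing the slice picture forward — since the moment map value determines the slice point) $u\cdot\theta$ and $\theta$ lie in the same fiber of \eqref{eq:gotohill}; now freeness of the action on that fiber forces $u=e$.

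\textbf{Main obstacle.} The crux — and the step I expect to require genuine care rather than bookkeeping — is verifying that the Drinfeld–Sokolov slice maps to a \emph{single} point under \eqref{eq:aqtau}, equivalently that the difference of any two slice connections annihilates $\gau(Q,\tau)$ under the trace pairing. In the coordinate normal form this reduces to a finite linear-algebra check in $\sl(n,\R)$ about which matrix entries of the slice live in $\on{ann}(\h)$, but the coordinate-dependence of the slice (Remark after Proposition~\ref{prop:ds}) means one should either argue the computation is coordinate-independent or phrase it using the canonical slice of Remark~\ref{rem:segal}. Everything else — equivariance of the moment map, the saturation argument, and the freeness — follows formally from Proposition~\ref{prop:ds} once this is in place.
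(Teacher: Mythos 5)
Your overall strategy --- reduce to showing that a Drinfeld--Sokolov slice collapses to a single point under the projection \eqref{eq:aqtau}, then saturate by the group action --- is the same as the paper's, and you correctly identify the crux. But the computation you flag as the heart of the matter is false as stated, and this is a genuine gap. In the trivialization from the proof of Proposition \ref{prop:ds}, $\gau(Q,\tau)=L\h$ where $\h\subset\sl(n,\R)$ consists of the matrices whose first column is a multiple of $e_1$; hence $\on{ann}(\gau(Q,\tau))$ consists of 1-forms valued in $\h^\perp$, and $\h^\perp$ is the space of matrices whose only nonzero entries lie in the \emph{first row}. The difference of two connections in the standard normal form \eqref{eq:ds} is supported in the last column, in positions $(i,n)$ for $i=1,\dots,n-1$, and for $n\ge 3$ this is \emph{not} contained in $\h^\perp$: for $n=3$, pairing such a difference $\delta$ against $E_{32}\in\h$ gives $\on{tr}(\delta E_{32})=\delta_{23}=a_1-a_1'$, which is generically nonzero. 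So two operators with different $a_1$ have standard-slice representatives with \emph{distinct} images in $\A(Q,\tau)$, and your argument does not yield a single orbit. (Your vanishing claim happens to hold only for $n=2$, where the off-diagonal part of the last column coincides with the first row.)

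The paper repairs exactly this point: rather than the standard normal form \eqref{eq:ds}, one applies a further gauge transformation in $LB$ to bring a connection of the form \eqref{eq:ds-step1} into the alternative normal form \eqref{eq:ds1}, in which all non-constant entries sit in the first row. The difference of two such connections then lies in $\on{ann}(\gau(Q,\tau))$ by inspection, so \emph{this} slice maps to a single point of $\A(Q,\tau)$, and the single-orbit statement follows. Your freeness argument also needs repair: for $u\in\Gau_\oz(Q,\tau)$, the statement that $u\cdot\theta$ and $\theta$ lie in the same fiber of \eqref{eq:gotohill} is automatic (that map is the quotient by $\Gau_\oz(Q,\tau)$) and gives no information; freeness on the fiber only applies once you know $u\cdot\theta=\theta$, not merely that the two connections differ by an element of $\on{ann}(\gau(Q,\tau))$. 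What is actually needed is that a $\Gau_\oz(Q,\tau)$-orbit meets each coset $\theta+\on{ann}(\gau(Q,\tau))$ in at most one point, which again comes down to the uniqueness of the gauge transformation producing the normal form \eqref{eq:ds1}.
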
	
\begin{proof}
Using the same trivialization as in the proof of Proposition \ref{prop:ds}, we have that 
$\on{ann}(\gau(Q,\tau))$ is the space of 1-forms on $S^1$ with values in $\on{ann}(\h)=\h^\perp$. These are the matrices whose nonzero entries are in the first row. 
In the proof of \ref{prop:ds}, we used that every matrix of a $\wh\tau$-regular connection can be put into the form 
\eqref{eq:ds-step1} by a gauge transformation in $L(K\cap \SO(2))$, 
which, in turn,  can be put into 
Drinfeld-Sokolov normal form \eqref{eq:ds} by a gauge transformation in $LB$, with all non-constant  entries in the last column. But we may also use a gauge transformation in $LB$ to put \eqref{eq:ds-step1} into a normal form $\xi_1\ d t$ where all such entries appear in the first row, 
\begin{equation}\label{eq:ds1} \xi_1=\left(
	\begin{array}{ccccc}
		0&*&\cdots&*&*\\
		1&\cdots&\cdots&0&0\\
		\cdots&\cdots&\cdots&\cdots&\cdots\\
		0&0&\cdots&0&0\\
		0&0&\cdots&1&0
	\end{array}
	\right)\end{equation} 	
The quotient by $\on{ann}(\gau(Q,\tau))$ amounts to considering connection 1-forms $\xi$ 
modulo their first row. The slice \eqref{eq:ds1}, under this quotient map, becomes a  single point. 
 \end{proof}
In summary, this presents the Poisson space $\ca{R}_n(\CC)$ as a Hamiltonian reduction at level $\O$, (Drinfeld-Sokolov reduction) of the space of connections by the action of $\Gau_\oz(Q,\tau)$:
\begin{equation}
\ca{R}_n(\CC)=\A(Q)_\O.
\end{equation}


\subsection{Positive hyperbolic elements}\label{subsec:positivehyper}
We shall now focus on the case $n=3$. An element  $g\in \SL(3,\R)$ is called \emph{positive hyperbolic} if it has three distinct, positive eigenvalues. Order these eigenvalues as  
\begin{equation}\label{eq:eigenvalues}
e^{c_1}<e^{c_2}<e^{c_3},\ \  \sum_i c_i=0.
\end{equation}
 Let $t\mapsto g^t$ denote the unique 1-parameter subgroup through $g^1=g$; it defines a flow $\vz\mapsto g^t\cdot\vz$ on $S^2$. There is a triangulation of $S^2$, invariant under the flow, 
with six vertices, twelve edges, and eight triangles. The open triangles are  the 
images of open cones in $\R^3$ spanned by triples of eigenvectors  for the eigenvalues. A triangle will be called \emph{positive} if this basis of eigenvectors is oriented, and negative otherwise. 

The three vertices of a triangle, given by images of eigenvectors corresponding to \eqref{eq:eigenvalues}, are respectively a repeller, saddle, and attractor for the flow. 

\begin{proposition}
	Suppose $\gamma\in \D_3(\CC)$ has positive hyperbolic monodromy $g$, and that $\gamma$ is (globally) convex: it intersects any  projective line in at most two points.  Then 
	\begin{enumerate}
		\item The curve $\gamma$ is contained a unique positive triangle $\Delta^\gamma\subset S^2$ (for the triangulation defined by $g$).
		\item The limits $\gamma(\pm\infty)=\lim_{t\to \pm \infty}\gamma(t)$ exist, and are 
		vertices of $\Delta^\gamma$. 
		\item The curve $\gamma$ admits an isomonodromic deformation  to the curve $t\mapsto g^t \cdot v$, for any choice of $\vz\in S^2$ and any choice of parametrization $\CC\cong S^1$. (That is, the two curves are 
		in the same fiber of $q\colon \D_3(\CC)\to Y_3(\CC)$.) 
	\end{enumerate}
\end{proposition}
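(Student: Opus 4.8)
The plan is to read all three claims off the dynamics of the flow $\vz\mapsto g^t\cdot \vz$ on $S^2$. Fix eigenvectors $w_1,w_2,w_3\in\R^3$ of $g$ for the eigenvalues in \eqref{eq:eigenvalues}, and write $C_{ij}\subset S^2$ for the ``coordinate'' great circle $\mathbb{P}(\operatorname{span}(w_i,w_j))$; these three circles cut $S^2$ into the eight open triangles of the statement, and each $C_{ij}$ is $g$-invariant with north--south dynamics between its four fixed points $\pm[w_i],\pm[w_j]$.

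\textbf{Part (a).} I would first show $\gamma$ is disjoint from $C_{12}\cup C_{13}\cup C_{23}$: if $\gamma(t_1)$ lay on some $C_{ij}$ and were not one of the vertices, then $g^n\gamma(t_1)=\gamma(\kappa^n(t_1))$ would give infinitely many points of $\gamma\cap C_{ij}$, contradicting convexity; a short local argument at the vertices (where $\gamma$ is a nondegenerate arc and hence crosses transversally the two coordinate circles through that vertex, again producing too many intersections) disposes of the remaining case. Since $\gamma(\wt\CC)$ is connected it then lies in a single open triangle $\Delta^\gamma$, unique by disjointness. Positivity of $\Delta^\gamma$ I would deduce from nondegeneracy together with the asymptotics of part (b): writing $\ti\gamma=\sum_i x_i w_i$ one has $\det(\ti\gamma,\ti\gamma',\ti\gamma'')=\det(w_1,w_2,w_3)\cdot\big(\text{Wronskian of }x_1,x_2,x_3\big)$, and the asymptotics (b) make the sign of that Wronskian transparent, forcing the eigenframe spanning the cone over $\Delta^\gamma$ to be positively oriented.

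\textbf{Part (b).} Fix a fundamental domain $I=[t_0,\kappa(t_0)]$; then $K:=\gamma(I)$ is a compact subset of the open triangle $\Delta^\gamma$, so in eigen-coordinates the barycentric coordinates of points of $K$ are bounded away from $0$. Consequently $g^n$ contracts $K$ uniformly onto the attracting vertex $v_+$ of $\Delta^\gamma$ (the one coming from the largest-eigenvalue eigenvector): the displacement of $g^n p$ from $v_+$ is $O(e^{(c_2-c_3)n})$ uniformly in $p\in K$. Since $\wt\CC=\bigcup_n\kappa^n(I)$ and $\gamma\circ\kappa^n=g^n\circ\gamma$, this gives $\lim_{t\to+\infty}\gamma(t)=v_+$, and symmetrically $\lim_{t\to-\infty}\gamma(t)=v_-$, the repelling vertex; both are vertices of $\Delta^\gamma$ (the saddle vertex simply does not occur as a limit, which is consistent with the statement).

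\textbf{Part (c), the substantive point.} Both $\gamma$ and $\delta_v:=(t\mapsto g^t\cdot v)$ for $v$ in a positive triangle are nondegenerate quasi-periodic curves of monodromy $g$, so I would join them by a path in $\mu^{-1}(g)\subset\D_3(\CC)$, in two stages. \emph{Stage one}: connect $\gamma$ to $\delta_{v'}$ for some $v'\in\Delta^\gamma$. Choosing the parametrization $\CC\cong S^1$ and coordinates on $\Delta^\gamma\cong\R^2$ in which the flow is translation by the vector $d=(c_2-c_1,c_3-c_2)$ (positive entries), $\gamma$ becomes a curve $\Gamma$ with $\Gamma(t+1)=\Gamma(t)+d$ and $\delta_{v'}$ an affine line $t\mapsto td+c'$; writing $\Gamma(t)=td+P(t)$ with $P$ periodic, the homotopy $\Gamma_s(t)=td+(1-s)P(t)+s\bar P$ ($\bar P$ the mean of $P$) preserves the quasi-periodicity, hence the monodromy $g$, and straightens $\Gamma$ to a flow line. \emph{Stage two}: the path $s\mapsto\delta_{v_s}$ connects models within a single positive triangle, and an explicit family pushing the attracting limit of the curve from one vertex to an adjacent one through the relevant saddle connects models across different positive triangles. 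The hard part I expect is nondegeneracy-preservation in part (c): neither homotopy is \emph{a priori} through nondegenerate curves, and this is exactly where the \emph{global} convexity hypothesis — rather than mere nondegeneracy — must be used, since a high-frequency or flat-point feature would destroy nondegeneracy under straightening yet is incompatible with meeting every projective line at most twice. An alternative to a hands-on homotopy would be to invoke the classification of connected components of spaces of positive nondegenerate curves on $S^2$ (Little, M.~Shapiro, Khesin--B.~Shapiro, Saldanha--B.~Shapiro, cited above) and reduce to a finite combinatorial check; pinning down the precise statement to cite and matching conventions would then be the main work.
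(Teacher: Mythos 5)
Your parts (a) and (b) follow essentially the paper's route (exclude the three $g$-invariant great circles using quasi-periodicity plus global convexity, then read off the limits from the linearized dynamics); your (b) is if anything more careful than the paper's one-line reduction to $\lim_{n}g^n\gamma(0)$, since you make the uniformity over a fundamental domain explicit. The genuine gap is in part (c), and you have located it yourself without closing it: your stage-one homotopy $\Gamma_s(t)=td+(1-s)P(t)+s\bar P$ is performed in logarithmic coordinates in which the flow is a translation, and in those coordinates there is no argument (and no obvious one) that $\Gamma_s$ stays nondegenerate for $0<s<1$; since the fiber of $q$ consists of curves joined by paths \emph{inside} $\D_3(\CC)$ with fixed monodromy, this is exactly the content of (c), so the proposal does not prove it. The paper closes this by a different choice of coordinates: in the affine chart $u=x_2/x_1$, $v=x_3/x_1$ the monodromy still acts \emph{linearly} (diagonally), so interpolation preserves quasi-periodicity with monodromy $g$, while convexity is preserved as well --- in the normalized parametrization \eqref{eq:varphicurve} the convexity constraint becomes the linear differential inequality $\delta''+3c_2\delta'-(c_2-c_1)(c_3-c_2)\delta<0$, whose solution set is a convex cone containing the constant $\delta\equiv 1$, and $\delta\equiv\mathrm{const}$ is precisely the flow line $t\mapsto g^t\cdot v$. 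This convex-cone structure is the missing idea; the fallback you mention (citing Little/Shapiro/Saldanha classification results) would still require matching their homotopy classes to the finer \emph{isomonodromic} equivalence used here, which is nontrivial.

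Two further points. First, your ``stage two'' is not only unnecessary but inconsistent with the Remark immediately following the proposition: convex curves of monodromy $g$ lying in distinct positive triangles lie in \emph{distinct} components of $\mu^{-1}(g)$, so no isomonodromic deformation ``pushes the attracting limit through a saddle'' into an adjacent triangle. The paper's proof only establishes (c) for $v\in\Delta^\gamma$, and the statement should be read that way. Second, in (a) your derivation of the positivity of $\Delta^\gamma$ from ``the asymptotics make the sign of the Wronskian transparent'' is not carried out and would require $C^2$ rather than $C^0$ asymptotics; the paper instead normalizes by a diagonal element and shows directly that the curve is a graph $v=f(u)$ with $u>0$, $f'>0$, $f''>0$, which pins down the standard positive octant.
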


\begin{center}
	\includegraphics[scale=0.3]{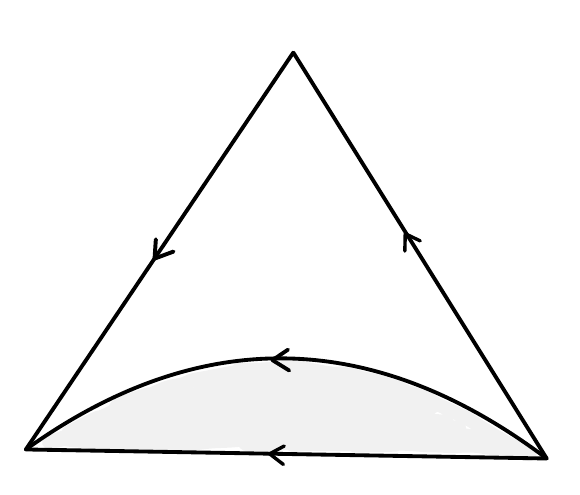}
\end{center}
\begin{proof}
	We may assume $\CC=S^1,\ \wt\CC=\R$. 	Using equivariance  with respect to the action of $\SL(3,\R)$, we may 
	arrange that 
	\begin{equation}\label{eq:positivehyperbolic}
	g=\on{diag}(e^{c_1},e^{c_2},e^{c_3}),
	\end{equation}
	 so the triangulation of $S^2$ is the standard one (given by images of standard orthants in $\R^3$). 
	If $\gamma$ were to meet the great circle line defined by  $x_i=0$,  it would have to cross that great circle an infinite number of times, by quasi-periodicity. For a convex curve, this is impossible. It follows that  every convex curve is fully contained in a fixed component of the set where $x_1x_2x_3\neq 0$. Using the action of a diagonal element of $\SL(3,\R)$, we may arrange that the curve  is contained in the region where $x_1>0,x_3>0$, and $x_2\neq 0$. 
	
	Introduce coordinates $u=x_2/x_1,\ v=x_3/x_1$ with $v>0$. The flow $g^t$ is given by 
	\[ (u,v)\mapsto (e^{t(c_2-c_1)}u,e^{t(c_3-c_1)}v).\]
	Its trajectories  are the curves $v=c\,u^b$ with $b=\f{c_3-c_1}{c_2-c_1}>0$ and $c\in \R$. 
	Looking at the sequence of points $\gamma(n)=g^n\gamma(0),\ n\in \Z$, we see that 
	the initial point $\gamma(0)$ (and hence the entire curve $\gamma$) must lie 
	in the region where $u>0$ (i.e., $x_2>0$), and its graph is of the form $v=f(u)$ where $f'>0$ and $f''>0$. 
	Given two convex 
	nondegenerate curves $\gamma_0,\gamma_1$ with monodromy $g$, both contained in this region, 
	their linear interpolation $\gamma_s(t)=(1-s)\gamma_0(t)+s\gamma_1$ (with sums taken in $\R^2$, for the choice of coordinates $u,v$)  again has this property. In particular, $\gamma(t)$ is homotopic to $t\mapsto g^t\cdot v$, for any choice of $v\in \Delta^\gamma$. The claim about limits follows since $\lim_{t\to \pm\infty}\gamma(t)=\lim_{n\to \pm \infty}\gamma(n)
	=\lim_{n\to \pm \infty} g^n\gamma(0)$. 
\end{proof}

The unique convex geodesic segment $s^\gamma\subset S^2$ connecting the limit points $\gamma(\pm\infty)$ is one of the open edges of  the closed triangle $\ol{\Delta}^\gamma$. It is called the \emph{principal segment} in Goldman's terminology \cite{gol:con}. Later, we will also be interested in the convex region 
\[ \Omega^\gamma\subset S^2\]
(sketched as the shaded region in the picture above), given by all points in the open region between $\gamma$ and the principal segment $s^\gamma$, together with those two curves
(but excluding $\gamma(\pm \infty)$).  

\begin{remark} 
For a given positive hyperbolic element $g$, the convex curves of monodromy $g$ can lie in any of the four positive 
triangles. This accounts for \emph{four} components of $\mu^{-1}(g)$. 
	In addition, $\mu^{-1}(g)$ also contains non-convex curves. In fact, \emph{any} quasi-periodic curve in $S^2$ with monodromy $g$ may be $C^0$-approximated by  nondegenerate curves, using the `telephone wire' construction  \cite{sal:hom}.  
	By the results of Khesin-Shapiro \cite{khe:non}, these non-convex curves give rise to two additional components  of $\mu^{-1}(g)$. 
\end{remark}

We shall reserve the notation $\Delta$ for the standard open triangle 
\[ \Delta=\{(x_1\colon x_2\colon x_2)\in S^2|\ x_1>0,x_2>0,x_3>0\}.\]
It is invariant under the action of the subgroup $A\subset \SL(3,\R)$ of diagonal matrices with positive entries. 
Let $A_+\subset A$ be the subset 
of matrices of the form $g=\on{diag}(e^{c_1},e^{c_2},e^{c_3})$ with $c_1<c_2<c_3$ and $\sum c_i=0$, and denote by 
\begin{equation}\label{eq:subdelta}
\D_3(\CC)_\Delta\subset \D_3(\CC)
\end{equation}
the space of all \emph{convex} curves in $\D_3(\CC)$, with  monodromy in $A_+$ 
and with range in $\Delta\subset S^2$. Given $\gamma\in \D_3(\CC)_\Delta$, written as $\gamma(t)=(1\colon u(t)\colon v(t))$, 
we may use $\f{1}{c_3-c_1}\log(v)$ as a coordinate on $\wt\CC$,  identifying $\CC=S^1$. In these coordinates, we have that 
\begin{equation}\label{eq:varphicurve} \gamma(t)=(e^{tc_1}\colon \delta(t) e^{tc_2}\colon e^{tc_3})\end{equation}
where $\delta\in C^\infty(\R)$ is positive and \emph{periodic}. By calculating the Wronskian, we see that convexity 
of $\gamma$ is equivalent to the inequality
\[  \delta''+3c_2 \delta'-(c_2-c_1)(c_3-c_2)\delta<0.\]
We note that the solutions of this inequality form a convex cone, containing  $\delta=1$. 

\begin{remark}
For a globally convex quasi-periodic curve $\wt{\CC}\to S^2$, the monodromy is either positive hyperbolic (the case considered above), or is conjugate to a $3\times 3$ Jordan block with eigenvalues $1$, or to the direct sum of a $2\times 2$ Jordan block with eigenvalue $\lambda>0$ and a $1\times 1$ block with entry $\lambda^{-2}$. See \cite[Section 2.2.8]{cas:mod} for a related discussion. 
\end{remark}

\section{Projective structures on surfaces}\label{sec:projsurface}
\subsection{$(G,X)$-structures}\label{subsec:gxstructures}
Projective structures may be considered within the framework of \emph{geometric structures} \cite{gol:geo}. Let  $X$ be a homogeneous space for a Lie group $G$. A $(G,X)$-structure on a connected manifold $M$ is described by an atlas with $X$-valued charts, with transition functions given by elements of $G$. 
These data determine a triple 
\begin{equation}\label{eq:triple}
(P,\theta,\sigma)\end{equation} 
of a principal $G$-bundle $P\to M$ with a flat connection $\theta$ and a
$G$-equivariant map $\sigma\colon P\to X$
satisfying 
\begin{equation}\label{eq:transversality}
\ker(T\sigma)\cap \ker\theta=0.
\end{equation}
Here $(P,\theta)$ is the flat bundle defined 
by the (constant) transition functions. Equivalently, the fibers $P|_m$ are the germs $[\phi]_m$ of charts 
$\phi\colon U\to X$ around $m$; the flat connection is such that for every chart $(U,\phi)$ the resulting map 
$U\to P|_U,\ m\mapsto [\phi]_m$ is horizontal.  The map $\sigma$ is given by 
\begin{equation}\label{eq:sigma}
 \sigma\colon P\to X,\ [\phi]_m\mapsto \phi(m).
 \end{equation}
 It may be regarded as a section of the associated bundle $P\times_GX$, and is called the 
 \emph{developing section}. Given a simply connected covering space $\wt{M}$ with group $\Gamma$ of deck transformations, 
 one obtains a \emph{developing map} $\varphi=\sigma\circ s\colon \wt{M}\to X$ for the $(G,X)$-structure, 
 by composing 
  $\sigma$ with a $\theta$-horizontal lift of the quotient map, 
 \[ 
 \begin{tikzcd}
 [column sep={5.5em,between origins},
 row sep={4em,between origins},]
 & P \arrow [r, "\sigma"'] \arrow[d] & X\\
 \wt{M} \arrow[ur, dashed, "s"] \arrow[r, "/\Gamma" '] & M & 
 \end{tikzcd}
 \]
 The developing map $\varphi$ is equivariant for a unique homomorphism $\varrho\in \Hom(\Gamma,G)$. The choice of $s$, and hence the map $\varphi$,  is unique up to the action of $G$; a different choice changes $\varrho$ by conjugation. 
 
 In the opposite direction, any triple \eqref{eq:triple} of a principal $G$-bundle with a flat connection and a
$G$-equivariant map $\sigma\colon P\to X$ satisfying \eqref{eq:transversality} defines a $(G,X)$-structure on $M$, 
with developing map $\varphi=\sigma\circ s$. 
The triple defined by this $(G,X)$-structure is canonically isomorphic to $(P,\theta,\sigma)$. 
\begin{remark}
	In our discussion below, we will usually \emph{fix} the pair $(P,\sigma)$. Writing $X=G/H$, the choice of $\sigma$ is equivalent to a reduction of structure group $P_H\subset P$ to $H\subset G$. (Hence, the choice of $(P,\sigma)$ is equivalent to a choice of principal $H$-bundle.) A connection on $P$ satisfying the transversality condition \eqref{eq:transversality} is equivalent to a 
	Cartan connection \cite{kob:tr}  on  $P_H$. 
\end{remark}

\begin{remark}
Condition \eqref{eq:transversality} means that the bundle map 
\begin{equation}
T\sigma|_{\ker\theta}\colon \ker\theta\to TX
\end{equation}
is a fiberwise isomorphism. If $M$ and the model space $X$ are oriented, then $\ker\theta$ and $TX$ inherit orientations, and we may require that this map is fiberwise 
\emph{orientation preserving}.  
\end{remark}

\subsection{Projective structures}
A \emph{projective structure}  on a surface $\Sigma$ is a $(\on{PGL}(3,\R),\RP^2)$-structure. If $\Sigma$ is oriented, then any 
chart map admits a unique lift to an orientation preserving map to the sphere $S^2$, with transition maps in $\SL(3,\R)$. 
Since orientations will be relevant for our discussion, we will therefore treat projective structures  on oriented surfaces 
as  $ (G,X)=(\SL(3,\R),S^2)$-structures. 
 
Projective structures may be described by their (orientation preserving) developing maps $\varphi\colon \wt\Sigma\to S^2$, 
equivariant with respect to a morphism 
$\varrho\in \Hom(\Gamma,G)$; the pair $(\varphi,\varrho)$ is unique up to the action of $G$. 
A \emph{geodesic} on $\Sigma$ is a curve that, locally in a chart, is given by a geodesic in $S^2$. 
The projective structure  is called \emph{convex} if every path $[0,1]\to \Sigma$ is homotopic relative fixed end points to a geodesic, 
uniquely up to reparametrization.  Equivalently \cite{gol:con}, the developing map $\varphi\colon \wt{\Sigma}\to S^2$ is injective, with convex image contained in an open hemisphere.

If $\Sigma$ has no boundary, we denote by $\on{Proj}(\Sigma)$ the space of all projective structures, and by  $\on{cProj}(\Sigma)$ the subspace of convex projective structures. 
The group $\on{Diff}_\oz(\Sigma)$ of diffeomorphisms isotopic to the identity acts on these spaces, and the quotient spaces are the \emph{deformation spaces}, 
\[ c\mf{P}(\Sigma)=\frac{\on{cProj}(\Sigma)}{\on{Diff}_\oz(\Sigma)}\subset \mathfrak{P}(\Sigma)=\frac{\on{Proj}(\Sigma)}{\on{Diff}_\oz(\Sigma)}.\]
%

We list some basic examples of projective structures, following \cite{choi:cla,gol:con}. 

\begin{example}[Hyperbolic structures]  Let $\DD$ denote the Poincar\'{e} disk, with its usual action of $\on{PSU}(1,1)$. 
A hyperbolic structure on $\Sigma$ is a $(\on{PSU}(1,1),\DD)$-structure; let $\on{Hyp}(\Sigma)$ be the space of hyperbolic structures. 
The Klein-Beltrami model of hyperbolic geometry defines an inclusion 
\begin{equation}\label{eq:klein2} 
\DD\hra S^2
\end{equation}
taking the center $0\in\DD$ to $(1\colon 0\colon 0)\in S^2$, and equivariant for a group morphism
\begin{equation}\label{eq:klein1} \on{PSU}(1,1)\hra \SL(3,\R).\end{equation}
(We may define \eqref{eq:klein1} via the adjoint representation of $\on{PSU}(1,1)$ on its Lie algebra
$\mf{psu}(1,1)\cong \R^3$.) 
Hence, any atlas with hyperbolic charts may be regarded as an atlas with projective charts.

If $\Sigma$ is compact, of negative Euler characteristic, then the projective structures arising in this way are convex, since $\wt\Sigma\cong \DD$ by uniformization. 
This defines an inclusion $\on{Hyp}(\Sigma)\to c\on{Proj}(\Sigma)$. 
Passing to isotopy classes, we obtain an inclusion 
\begin{equation}\label{eq:teich}
 \on{Teich}(\Sigma)\to c\mf{P}(\Sigma)\end{equation}
of Teichm\"uller space.  
\end{example}

\begin{example}[Principal annulus \cite{gol:con}]  \label{ex:principalannulus}
Let $g=\on{diag}(e^{c_1},e^{c_2},e^{c_3})\in A_+$, i.e., $c_1<c_2<c_3$ with $\sum c_i=0$. The map 
%
\begin{equation}\label{eq:principalannus}
\varphi\colon \R^2\to S^2,\ (x,y)\mapsto (e^{xc_1}\colon -y e^{xc_2}\colon e^{xc_3})
\end{equation}
is an embedding onto the convex subset $\Omega$ of all $(x_1\colon x_2\colon x_3)$ such that $x_1,x_3>0$. It intertwines the transformation 
$(x,y)\mapsto (x+1,y)$ with the action of $g$. Hence, $\varphi$ serves as a developing map for 
a convex projective structure on the open annulus $\R/\Z\times\R$. This is the  
 \emph{principal annulus of monodromy $g$}. The curve $y=0$ is the unique closed geodesic  for this projective structure. 
It separates the principal annulus into two \emph{half-annuli}, defined by $y\ge 0$ and $y\le 0$.  
\end{example}

\begin{example}[Surfaces of negative Euler characteristic]\label{ex:goldman}
The work of Choi and Goldman, building on earlier work of Kuiper,  Benz\'ecri, Koszul, Kac-Vinberg, and others, gave a complete classification of projective structures on compact oriented surfaces $\Sigma$ without boundary and with negative Euler characteristic, $\chi(\Sigma)<0$.
The main result of 
\cite{choi:con} states that the deformation space of \emph{convex} projective structures is the Hitchin component 
(higher Teichm\"uller space) \cite{hit:lie}
of the character 
variety of $\SL(3,\R)$.  Furthermore, by Goldman's work \cite{gol:con} (see also Kim \cite{kim:sym}) this space admits \emph{Fenchel-Nielsen parameters}: 
\begin{itemize}
	\item  The holonomies along simple loops in $\Sigma$ are 
	positive hyperbolic elements; the conjugacy class of the holonomy assigns two \emph{length parameters}	to the loop. 
	\item Every simple loop is freely homotopic to a unique closed geodesic. 
	The local model around this geodesic is given by the principal annulus (Example \ref{ex:principalannulus}). 
	\item Cutting along non-intersecting closed geodesics, one obtains a pants decomposition of the surface.    
	The projective structure on each 
	pair of pants with geodesic boundary is determined by eight parameters:
	two length parameters for each boundary component, and two \emph{internal parameters}. 
	\item For each gluing circle there are also two \emph{twist parameters}, since pants may be glued with a twist, similar to the case of hyperbolic structures. (See Section \ref{subsec:twists} below.) 
	\item For a surface of genus $\gz\ge 2$, there are $2\gz-2$ pants with $3\gz-3$ gluing circles. The convex projective structures are thus described by $(2\gz-2)\cdot 2+(3\gz-3)\cdot (2+2)=(2\gz-2)\cdot \dim G$ parameters. 
\end{itemize} 
More general projective structures, not necessarily convex, are obtained from the convex ones by 
gluing-in of $\pi$-annuli
\cite{choi:con1,choi:con2,choi:cla}.
\end{example}

\subsection{Goldman twists}\label{subsec:twists}
Consider again the principal annulus, Example \ref{ex:principalannulus}. The action of $A\subset G$ on $S^2$ preserves $\Omega$ and commutes with the action of $g$, and so descends to an action  on the principal annulus by projective transformations.  
 Given $h\in A$, we may choose an orientation preserving diffeomorphism $\mathsf{F}$ of the annulus that is trivial for $y<-\epsilon$ and agrees with the action of $h$  for $y>\epsilon$, for 
some $\epsilon>0$. Push-forward under $\mathsf{F}$ produces a new projective structure on the annulus, which agrees with the original one for $|y|>\epsilon$, but is deformed inside the region $|y|\le \epsilon$. The diffeomorphism $\mathsf{F}$, and hence the resulting projective structure, is unique up to a diffeomorphism that is supported inside the $\epsilon$-neighborhood, 
and (in particular) is isotopic to the identity. 

Given a projective structure on a general surface $\Sigma$, as in Example \ref{ex:goldman}, and a simple closed oriented geodesic $\lambda\subset \Sigma$ with positive hyperbolic holonomy, 
we may use the principal annulus as a model
to `twist' the projective structure by elements $h\in A$. Up to isotopy, the new projective structure depends only on $\lambda$ and 
$h$.  We hence obtain, for any $\lambda$, an action of  $A$ on the space $c\mathfrak{P}(\Sigma)$, called the \emph{Goldman 
twist}. The Goldman twists for distinct simple closed oriented geodesics commute.
 
Taking $\lambda$ to be one of the curves from a pants decomposition, these twists change the twist parameters corresponding to $\lambda$ but do not affect any of the other parameters. 
In \cite{wie:def}, Wienhard and Zhang also introduced \emph{eruption flows},  changing the internal parameters of a pair of pants
(but not affecting the twist or length parameters).

\subsection{Projective structures with nondegenerate boundary}
The Choi-Goldman theory also applies to projective structures on surfaces with boundary, provided that the boundary components are \emph{geodesics} with positive hyperbolic holonomy.  In this article, we will consider the opposite extreme: 

\begin{definition}
Let $\Sigma$ be a connected oriented surface with boundary. A projective structure on $\Sigma$, with developing map $\varphi\colon \wt{\Sigma}\to S^2$,  has \emph{ nondegenerate boundary}, if the restriction of $\varphi$ to boundary components
$\wt\CC\subset \p\wt{\Sigma}$ are nondegenerate curves.  
We denote by $\on{Proj}(\Sigma)$ the space of projective structures with  nondegenerate boundary, and 
by $\on{cProj}(\Sigma)$ the subspace for which (i) the projective structure is convex, and (ii) all boundary holonomies are 
positive hyperbolic. The corresponding deformation spaces are denoted  
\[ c\mathfrak{P}(\Sigma)=\f{c\on{Proj}(\Sigma)}{\Diff_\oz(\Sigma,\p\Sigma)},\ \ \ 
\mathfrak{P}(\Sigma)=\f{\on{Proj}(\Sigma)}{\Diff_\oz(\Sigma,\p\Sigma)},\]
where $\Diff(\Sigma,\p\Sigma)$ is the group of diffeomorphisms fixing the boundary (pointwise), and $\Diff_\oz(\Sigma,\p\Sigma)$ 
its identity component.
\end{definition}
Convexity means, as before, that every path $[0,1]\to \Sigma$ is homotopic (rel fixed end points) to a unique geodesic, or equivalently that the image of a developing map $\varphi$ is convex.  If $\p\Sigma\neq\emptyset$, the spaces $c\mathfrak{P}(\Sigma)\subset \mathfrak{P}(\Sigma)$ are infinite-dimensional. 

Consider a  boundary component $\wt{\CC}\subset\p \wt{\Sigma}$ mapping to $\CC\subset \p\Sigma$. The nondegenerate curve 
$\gamma\colon \wt{\CC}\to S^2$ determines an Adler-Gelfand-Dikii operator  $L\in \ca{R}_3(\CC)$. Changing the pre-image of $\CC$ in $\p \wt{\Sigma}$, or changing $\varphi$ by the $G$-action, does not affect this operator. Additionally,  $L$ is 
unchanged under the diffeomorphisms fixing the boundary. Using this construction for all components of $\p\Sigma$, 
we obtain a map, 
\begin{equation}\label{eq:boundaryrestriction} 
\Psi\colon \mathfrak{P}(\Sigma)\to \ca{R}_3(\p\Sigma),
\end{equation}
equivariant for the residual action of $\Diff_+(\Sigma)/ \Diff_\oz(\Sigma,\p\Sigma)$.

\subsection{Examples}
\begin{example}[Hyperbolic 0-metrics]\label{ex:hyperbolic}
	In \cite{al:symteich}, a hyperbolic structure with ideal boundary 
	is defined in terms of an atlas with charts taking values in the \emph{closed} Poincar\'{e} disk $\ol{\DD}$, with transition functions given by elements of $\on{PSU}(1,1)$.  (This is not literally a $(G,X)$-structure, since $\ol{\DD}$ is not a homogeneous space.) The embedding \eqref{eq:klein2} given by the Klein-Beltrami model extends to an embedding $\ol{\DD}\to S^2$ of the closed Poincar\'{e} disk, 
	valued in the hemisphere where $x_1>0$.  We hence obtain a map $\on{Hyp}(\Sigma)\to \on{cProj}(\Sigma)$, 
	intertwining the map $\ca{R}_2(\p\Sigma)\to \ca{R}_3(\p\Sigma)$ (cf.~ Remark \ref{rem:inclusion}) of the boundary restrictions. Passing to deformation spaces, 
	we arrive at the diagram 
	\begin{equation}\label{embedding}
	\begin{tikzcd}
	[column sep={6.5em,between origins},
	row sep={4em,between origins},]
	 \on{Teich}(\Sigma)  \arrow[d,hook']\arrow[r,"\Phi"]   &   \ca{R}_2(\p\Sigma)  \arrow[d,hook'] \\
	\mf{P}(\Sigma)\arrow [r,"\Psi"]  & \ca{R}_3(\p\Sigma)
	\end{tikzcd}
\end{equation}
\end{example}

\begin{example}[Disk]
As a special case of Little's theorem \cite{lit:nondeg}, any two (globally) \emph{convex}  nondegenerate loops $\lambda\colon S^1\to S^2$ are homotopic. 
Extending $\lambda$ to an embedding of the disk $D^2$ (such an extension is unique up to isotopy fixing the boundary), we obtain a convex projective structure on the disk, with  nondegenerate boundary. In this way, the space of convex projective structures with  nondegenerate boundary is identified with the space of convex  nondegenerate loops $\lambda\colon S^1\to S^2$, up to the action of $G$. By comparison, the Teichm\"uller space of hyperbolic $0$-metrics on the disk is a quotient 
$\on{Diff}_+(S^1)/G$ of orientation-preserving diffeomorphisms modulo the projective ones \cite{al:symteich}. 

As a simple example of a \emph{non-convex} projective structure on the disk, take an embedding $D^2\to S^2$ whose image contains a hemisphere, and with boundary a convex curve. Its composition with the quotient map gives a developing map $D^2\to S^2$ that is not injective. 
\end{example}

\begin{example}[Annulus] \label{ex:annulus}
Recall from Example \ref{ex:principalannulus} the construction of a projective structure on the open annulus $\R/\Z\times \R$, with monodromy $g=\on{diag}(e^{c_1},e^{c_2},e^{c_3})\in A_+$. Its restriction to 
$\R/\Z\times [-1,1]$ defines a projective structure on the \emph{closed} annulus, 
with nondegenerate boundary. More generally, given two curves $\gamma_\pm\in\D_3(S^1)_\Delta$, written in the form 
$\gamma(t)=(e^{t c_1}\colon \delta_\pm (t) e^{t c_2}\colon e^{tc_3})$ with periodic functions $\delta_\pm$ 
as in \eqref{eq:varphicurve}, the subset where  
$-\delta_+(x)\le y\le \delta_-(x)$  is a closed annulus, which may be identified with $\R/\Z\times [-1,1]$ after  reparametrization.

For a coordinate-free  description of this example, let $\CC$ be an `abstract' circle, 
and suppose  $\gamma\in\D_3(\CC)$ has positive hyperbolic monodromy $g$. Let $\Omega^\gamma\subset S^2$ be the convex area enclosed by the curve $\gamma$ and the geodesic arc $s^\gamma$ between $p_-=\gamma(-\infty)$ and  $p_+=\gamma(\infty)$, 
including both curves but excluding $p_\pm$: 
\begin{center}
	\includegraphics[scale=0.2]{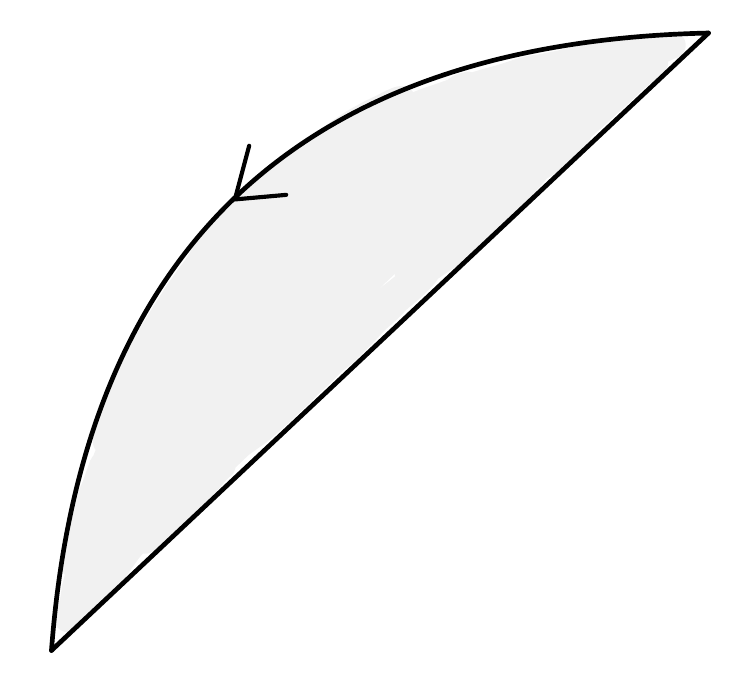}
\end{center}
The quotient $\Omega^\gamma/\l g\r$ is a closed annulus, with a projective structure having one geodesic boundary 
(the image of the geodesic arc) 
and one nondegenerate boundary. 
This is the \emph{half-annulus} associated with $\gamma$. Let $R\in \PGL(3,\R)$ be reflection across the great circle through $\gamma(\pm\infty)$.  Given two curves $\gamma_0,\gamma_1\in\D_3(\CC)$, both with monodromy $g$ and end points $p_\pm$
let $\Omega=\Omega^{\gamma_0}\cup R(\Omega^{\gamma_1})
\subset S^2$. The region $\Omega$  is depicted as 
\begin{center}
	\includegraphics[scale=0.2]{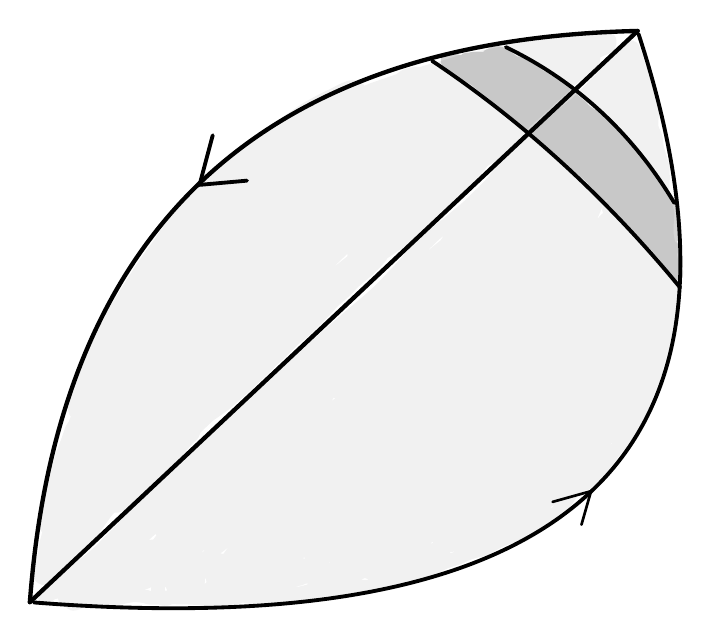}
\end{center}
where the darkly shaded region depicts a fundamental domain for the action of $g$.
The quotient $\Sigma=\Omega/\l g\r$ 
is a projective annulus with nondegenerate boundary. 


 In this way, any pair $(\gamma_1,\gamma_0)\in \D_3^{[2]}(\CC)$ of isomonodromic curves
 with positive hyperbolic monodromy defines a convex projective structure on 
 $\CC\times [-1,1]$. Conjugating such a pair by the action of $G$ does not change the projective structure. 
 We hence obtain an inclusion  
 \[ c\mathfrak{P}(\CC\times [-1,1])\subset \D_3^{[2]}(\CC)/G=\S_3(\CC).\]
 Its image is the restriction of the groupoid $\S_3(\CC)\rra \ca{R}_3(\CC)$ to the subset of operators $L$ with positive hyperbolic monodromy, and corresponding to convex curves $\gamma\in p^{-1}(L)$.  
 Using the $G$-action, we may arrange that the monodromy is in $A_+$, 
 leaving a  residual action of $A$. We hence also have 
 \begin{equation} \label{eq:annulusstructure} c\mathfrak{P}(\CC\times [-1,1])\cong \D^{[2]}_3(\CC)_\Delta/A,\end{equation}
 where $\D^{[2]}_3(\CC)_\Delta\subset \D_3^{[2]}(\CC)$ are pairs of curves in 
 $\D_3(\CC)_\Delta$.   (Cf. Section \ref{subsec:positivehyper}.)  
\end{example}

\begin{example}[Surfaces of negative Euler characteristic]
Let $\Sigma$ be a compact connected oriented surface of genus $\mathsf{g}$ with $\mathsf{r}>0$ boundary components, 
with negative  Euler characteristic  $\chi(\Sigma)=2-2\mathsf{g}-\mathsf{r}<0$. 
Suppose we are given a projective structure in $c\on{Proj}(\Sigma)$, with developing map $\varphi\colon \wt{\Sigma}\to \Omega\subset S^2$ and associated morphism $\varrho\in \Hom(\Gamma,G)$.  
Each boundary circle $\CC\subset \p\Sigma$ is homotopic to a 
unique geodesic loop $\lambda_\CC\subset \Sigma$. Indeed, the restriction 
of $\varphi$ to a component $\wt{\CC}\subset \p\wt{\Sigma}$ mapping to $\CC$ is a nondegenerate curve 
$\gamma\colon \wt\CC\to S^2$ with positive hyperbolic monodromy, and $\lambda_\CC$ is the image of the line segment 
connecting the limit points $\gamma(\pm\infty)$. Let $\Sigma_\CC\subset \Sigma$ be the  principal half-annulus given as the image of $\Omega^\gamma\subset \Omega$. Removing from $\Sigma$ all such half annuli (except for the geodesic itself), we obtain a surface with projective structure having \emph{geodesic boundary}, called the 
\emph{core}
\[ \Sigma_{\on{core}}=\Sigma-\bigsqcup_{\CC\subset \p\Sigma}(\Sigma_\CC-\lambda_\CC).\]
 In the case of a (complete) hyperbolic structure (as in Example \ref{ex:hyperbolic}), this is the usual \emph{convex core} of the surface, and the annuli $\Sigma_\CC$ are the \emph{funnels} (trumpets) -- see for example \cite[Section 2.4.1]{bor:spec}.
 
 The projective structure on $\Sigma_{\on{core}}$ can be transferred back to $\Sigma$, by choice of a diffeomorphism 
 $\Sigma_{\on{core}}\cong \Sigma$ (which we take to be the identity outside collar neighborhood of $\lambda_\CC$). 
 We hence obtain a canonical map 
 \[ c\mf{P}(\Sigma)\to c\mf{P}_{\on{geod}}(\Sigma)\]
to the finite-dimensional deformation space of convex projective structures with geodesic boundary (up to diffeomorphisms isotopic the identity and preserving the boundary, but not necessarily fixing the boundary). 
The latter space comes a natural map to $(A_+)^r$, taking such a projective structure to the conjugacy class of the monodromy around the boundary components $\CC_1,\ldots,\CC_r$. Gluing back the half-annuli, we arrive at the description as a fiber product
over $(A_+)^r$, 
\begin{equation}\label{eq:fiberproduct}
c\mf{P}(\Sigma)=c\mf{P}_{\on{geod}}(\Sigma)\times_{(A_+)^r}\underbrace{ \times_{i=1}^{\mathsf{r}}(\D_3(\CC_i)_\Delta)}.
\end{equation}
Goldman's theory \cite{gol:con}  introduces Fenchel-Nielsen parameters on the space $c\mathfrak{P}_{\on{geod}}(\Sigma)$, for a given 
 pants decomposition of the surface, and a collection of `model seams' \cite[Section 10.6.2]{far:pri}. 
 This involves a copy of $A_+\times A=\R_{>0}^2\times \R^2$ for each of 
 the $3\mathsf{g}+\mathsf{r}-3$ gluing circles, 
 two `internal' parameters for each of the  $2\mathsf{g}+\mathsf{r}-2$ pants, and one 
 copy of $A_+$ for each boundary circle.   By combining this description with \eqref{eq:fiberproduct}, 
 we obtain 
 \[ c\mathfrak{P}(\Sigma)\cong \underbrace{(\R^2)^{2\mathsf{g}+\mathsf{r}-2}}_{\mbox{pants}}\times \underbrace{(\R_{>0}^2\times \R^2)^{3\mathsf{g}+\mathsf{r}-3}}
 _{\mbox{gluing circles}} \times 
 \underbrace{ \times_{i=1}^{\mathsf{r}}(\D_3(\CC_i)_\Delta)}_{\mbox{boundaries}}.\]
\end{example}

\begin{remark}
According to Labourie \cite{lab:fla} and Loftin \cite{lof:aff}, the deformation space of convex projective structures on a compact oriented surface $\Sigma$ without boundary, with $\chi(\Sigma)<0$,  is also the deformation space of 
complex structures $J$  together with a $J$-holomorphic cubic differential. Similarly \cite{ovs:pro}, for $\CC$ a compact oriented 1-manifold, elements of $\ca{R}_3(\CC)$ are equivalent to a projective structures on $\CC$  together with a cubic differential on $\CC$. It  would be interesting to have a combination of these results, for projective structures  with nondegenerate boundary. 
\end{remark}

\subsection{Groupoid action}\label{subsec:groupoidaction}
Given a developing map $\varphi\colon \wt\Sigma\to S^2$ with associated monodromy $\varrho\in \Hom(\Gamma,G)$, let $[\varphi]\in \mf{P}(\Sigma)$ denote its equivalence class modulo the action of $G$ and the action of $\on{Diff}_\oz(\Sigma,\p\Sigma)$. We obtain a groupoid $\L(\Sigma)\rra \mf{P}(\Sigma)$, with arrows 
 \[ [\varphi_1]\longleftarrow [\varphi_0]\]
given by isomonodromic deformations  of developing maps, i.e., homotopies $\varphi_t$ of developing maps for fixed 
$\varrho$.  Since any such  homotopy defines an isomonodromic deformation $\partial \varphi_t$ of the boundary restrictions, we obtain a morphism of groupoids 
\[ 
\begin{tikzcd}
[column sep={5.5em,between origins},
row sep={4em,between origins},]
\L(\Sigma)   \arrow[d,shift left]\arrow[d,shift right]\arrow[r]   & \ca{S}_3(\p\Sigma)    \arrow[d,shift left]\arrow[d,shift right] \\
\mf{P}(\Sigma)\arrow [r]  & \ca{R}_3(\p\Sigma)
\end{tikzcd}
\]
Here $\S_3(\p\Sigma)\rra \ca{R}_3(\p\Sigma)$ denotes the direct product of the groupoids $\S_3(\CC)\rra \ca{R}_3(\CC)$ as $\CC$ ranges over boundary components of $\Sigma$. 

We may think of $\L(\Sigma)$ as the action groupoid for a \emph{local} action of the groupoid $\S_3(\p\Sigma)$. 
To have a global groupoid action, the map 
\begin{equation}\label{eq:action} \L(\Sigma)\to \S_3(\p\Sigma)\times_{\ca{R}_3(\p\Sigma)}\mf{P}(\Sigma).\end{equation}
(given by boundary restriction together with the source map for $\L(\Sigma)$)  would have to be a bijection. 
This map is locally $1-1$, due to the fact that `small' homotopies of the boundary values $\p\varphi$ can be extended to 
homotopies of $\varphi$. But globally, it fails to be surjective in general: For example, $\p\varphi$ could be homotopic to a nondegenerate quasi-periodic map that is no longer injective. This problem does not arise for \emph{convex} projective structures
with positive hyperbolic boundary monodromies:
\begin{proposition}
The restriction of $\L(\Sigma)$ to the deformation space $c\mf{P}(\Sigma)$  is an action groupoid. 	That is, 
\eqref{eq:action} restricts to a bijection over $c\mf{P}(\Sigma)$.
\end{proposition}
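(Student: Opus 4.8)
The plan is to prove two things: that the map \eqref{eq:action} is injective over $c\mf{P}(\Sigma)$, and that it is surjective there. Injectivity amounts to showing that an isomonodromic deformation of a convex projective structure is determined by the induced deformation of its boundary curves (together with its source), and surjectivity amounts to showing that any isomonodromic deformation of the boundary curves extends to one of the whole structure. The essential point for both is the fiber-product description \eqref{eq:fiberproduct}: a convex projective structure with nondegenerate, positive hyperbolic boundary is, up to isotopy, assembled from a convex structure with \emph{geodesic} boundary together with the boundary curves $\gamma_i\in\D_3(\CC_i)_\Delta$, glued over the length parameters in $(A_+)^{\mathsf r}$. So the plan is to reduce everything to a statement about deformations of the half-annuli, keeping the core $\Sigma_{\on{core}}$ essentially rigid.

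First I would set up the correspondence between arrows of $\L(\Sigma)$ over $c\mf{P}(\Sigma)$ and data on the core plus boundaries. Given a homotopy $\varphi_t$ of developing maps with fixed monodromy $\varrho$, each $\varphi_t$ is still convex (convexity is open and closed among nondegenerate structures with fixed positive hyperbolic boundary monodromy — this is exactly the content of the convexity discussion in Section \ref{sec:curves}, since the boundary curves stay in a fixed positive triangle), so $\varphi_t$ has a well-defined core $\Sigma_{\on{core}}(t)$ and half-annuli $\Sigma_{\CC_i}(t)=\Omega^{\gamma_i(t)}/\langle g_i(t)\rangle$. Because $\varrho$ is fixed, the holonomy of each boundary circle is constant, hence $g_i(t)$ is a fixed conjugacy class in $A_+$; moreover the core, being a convex projective structure with geodesic boundary and fixed boundary holonomy conjugacy classes, lies in a \emph{finite-dimensional} deformation space, and its isomonodromic deformation class is rigid — an isomonodromic family in a finite-dimensional space with constant holonomy representation (up to conjugation) is constant up to the $G$-action and isotopy. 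This is the step where I pin down that all the motion is in the half-annuli, i.e., in the curves $\gamma_i(t)\in\D_3(\CC_i)_\Delta$, which is precisely the data of an arrow of $\S_3(\CC_i)$.

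With this in hand, injectivity of \eqref{eq:action} follows: an arrow of $\L(\Sigma)$ over $c\mf{P}(\Sigma)$ is determined by its source $[\varphi_0]$ together with the isomonodromic deformation classes of the $\gamma_i$, i.e., by its image in $\S_3(\p\Sigma)\times_{\ca{R}_3(\p\Sigma)}c\mf{P}(\Sigma)$. For surjectivity I would run the gluing construction of Example \ref{ex:annulus} in families: given $[\varphi_0]\in c\mf{P}(\Sigma)$ and arrows $(\gamma_i(1)\leftarrow\gamma_i(0))$ in $\S_3(\CC_i)$ compatible with the boundary of $\varphi_0$, choose isomonodromic homotopies $\gamma_i(t)$ (using that $\D_3^{[2]}(\CC)$ is defined by homotopies through fixed-monodromy curves), note these can be arranged to keep the monodromy in a fixed $A_+$-conjugacy class and hence, after the residual $A$-action, literally fixed; then replace the half-annuli $\Sigma_{\CC_i}$ of $\varphi_0$ by $\Omega^{\gamma_i(t)}/\langle g_i\rangle$ and glue them back to the unchanged core along a collar of $\lambda_{\CC_i}$, exactly as in \eqref{eq:fiberproduct}. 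The gluing diffeomorphism can be chosen to depend smoothly on $t$ and to be the identity at $t=0$, producing a homotopy $\varphi_t$ of developing maps with the prescribed monodromy and the prescribed boundary behavior, hence an arrow of $\L(\Sigma)$ lifting the given element.

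The main obstacle I expect is the rigidity claim for the core in the first step — specifically, verifying that an isomonodromic deformation of a convex projective structure with geodesic boundary and fixed boundary-holonomy conjugacy classes is trivial up to isotopy and the $G$-action. This should follow from the fact that $c\mf{P}_{\on{geod}}(\Sigma)$ is finite-dimensional with the holonomy map to the character variety a local homeomorphism onto an open set (Choi–Goldman), so that the connected components of the fibers of the holonomy-conjugacy-class map are points; but one has to be careful about how `fixing $\varrho$' (not just its conjugacy class) interacts with the $G$-quotient and with the twist parameters along interior gluing circles, since those can vary while the global holonomy is conjugated. I would handle this by observing that in \eqref{eq:fiberproduct} the interior twist and internal parameters are functions of $\varrho$ up to conjugation — they are read off from the holonomy representation — so fixing $\varrho$ up to conjugation forces them constant, leaving only the boundary curves free. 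The remaining work is routine checking that all the chosen homotopies and gluings can be made smooth in the parameter and compatible at the endpoints.
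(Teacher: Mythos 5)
Your argument is correct and rests on the same structural idea as the paper's proof: decompose a convex structure into its core and the boundary half-annuli, show that an isomonodromic deformation freezes the core, and identify the residual freedom with isomonodromic deformation of the boundary curves $\gamma_i\in\D_3(\CC_i)_\Delta$. Where you differ is in how rigidity of the core is established. You flag this as the main obstacle and resolve it by importing the finite-dimensionality of $c\mf{P}_{\on{geod}}(\Sigma)$ together with (local) injectivity of the holonomy map into the character variety, which forces an isomonodromic family of geodesic-boundary structures to be constant up to isotopy and the $G$-action; this is what obliges you to worry about twist parameters and about the gap between fixing $\varrho$ and fixing its conjugacy class. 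The paper's argument is more elementary and purely geometric: the endpoints $\gamma_i(\pm\infty)$ are fixed points of the boundary monodromies $\varrho(\cdot)$, and since they vary continuously in a discrete set along the homotopy they are literally constant; hence the geodesic segments $s^{\gamma_i}$, and with them the developing image $\Omega_{\on{core}}$ itself (not merely its isotopy class), are unchanged. This buys two things: no appeal to Choi--Goldman or to the Fenchel--Nielsen parametrization is needed, and one gets for free the pointwise (rather than up-to-conjugation) rigidity of the core's developing image, which is what is actually used when the deformed half-annuli are glued back in. Your surjectivity step, gluing the regions $\Omega^{\gamma_i(t)}$ onto the unchanged core in families, is sound and is essentially what the paper summarizes as ``an isomonodromic deformation of $\varphi$ is equivalent to an isomonodromic deformation of the union of all boundary curves''; the one ingredient left implicit in both treatments is that the target of an arrow of $\S_3(\CC)$ with convex source is again convex and lies in the same positive triangle, which is supplied by the component analysis of $\mu^{-1}(g)$ in Section~\ref{subsec:positivehyper}.
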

\begin{proof}	
Consider a convex projective structure on $\Sigma$, with developing map $\varphi\colon\wt\Sigma\to S^2$ and monodromy $\varrho\in \Hom(\Gamma,G)$, with image $\Omega=\phi(\wt\Sigma)$. 
For each boundary component $\CC$, with a choice of pre-image $\wt{\CC}$, the developing map restricts to a nondegenerate quasi-periodic curve $\gamma\colon\wt\CC\to S^2$ 
with positive hyperbolic monodromy $g_\gamma\in G$. The end points $\gamma(\pm\infty)$ are fixed points of $g_\gamma$, and hence are unchanged under homotopies of 
the boundary values $\p\varphi=\varphi|_{\p\wt\Sigma}$. This implies that $\Omega_{\on{core}}=\varphi(\Sigma_{\on{core}})$
is unchanged under any isomonodromic deformation of $\varphi$. The full region $\Omega$ is the union of 
$\Omega_{\on{core}}$ with the regions  $\Omega^\gamma\subset \Delta^\gamma\subset S^2$, and an 
isomonodromic deformation of $\varphi$ is equivalent to an isomonodromic deformation of the union of all boundary curves 
$\gamma$. 
\end{proof}

The groupoid action on $c\mf{P}(\Sigma)$ may also be seen in terms of the description \eqref{eq:fiberproduct}, 
and is given by the action of 
\begin{equation}\label{eq:subgroupoid} \D^{[2]}_3(\CC)_\Delta/A\subset \S_3(\CC)
\end{equation}
on $\D_3(\CC)_\Delta$, for each boundary component $\CC$.

\section{Symplectic structure}\label{sec:symplectic}
Throughout this section, $\Sigma$ denotes a compact, connected, oriented surface with boundary $\p\Sigma$ (possibly empty). 
The symplectic structure on $\mathfrak{P}(\Sigma)$ will ultimately be obtained from the Atiyah-Bott symplectic structure on the space of connection on a principal bundle, using reduction. Special care is required due to the presence of boundary terms.

\subsection{$\sigma$-positive connections}
Consider a pair $(P,\sigma)$ of a 
principal $G=\SL(3,\R)$-bundle and a $G$-equivariant \emph{developing section}
\begin{equation}\label{eq:sigmamap}
\begin{tikzcd}
[column sep={6.5em,between origins},
row sep={4em,between origins},]
P\arrow[d]\arrow[r,"\sigma"]   & S^2=G/H\\
\Sigma& 
\end{tikzcd}
\end{equation}
Over the boundary, we also fix a lift  of $\p\sigma\colon \p P\to S^2$ to a map (cf.~ \eqref{eq:gammalift}) 
\begin{equation}\label{eq:hattau}
\begin{tikzcd}
[column sep={7.5em,between origins},
row sep={4em,between origins},]
\p P\arrow[d]\arrow[r,"\wh{\p\sigma}"]   &\on{Fl}^+(\R^3)=G/B \\
\p \Sigma& 
\end{tikzcd}
\end{equation}
  Since $\sigma$ is equivalent to a reduction of structure group  to $H\subset G$, the choice of $(P,\sigma)$ amounts to a choice of principal $H$-bundle $P_H$. 
Since $\pi_1(H)=\pi_1(\SO(2))=\Z$ we see that for $\p\Sigma=\emptyset$, 
these are classified by an integer (\emph{Euler number}) 
\begin{equation}
e(P,\sigma)=e(P_H)\in \Z.
\end{equation}
If $\Sigma$ has non-empty boundary, the choice of $\wh{\p\sigma}$ reduces the structure group of $\p P$
to the contractible group $B$, which determines a class of trivializations of $\p P_H\supset \p P_B$ and hence 
$e(P,\sigma)$  is defined. Here, and in what follows, 
we omit reference to the lift \eqref{eq:hattau}, for ease of notation.
Given the data \eqref{eq:sigmamap}, we define: 
\begin{definition}\label{def:positive}
	A connection $\theta\in \Omega^1(P,\g)$ is called \emph{$\sigma$-positive} if: 
	\begin{enumerate}
		\item The horizontal foliation is transverse to $\sigma$-level sets,  
		$\ker\theta\cap \ker(T\sigma)=0$, and the resulting bundle map 
		\[ T\sigma|_{\ker(\theta)}\colon \ker(\theta)\to T S^2\] is orientation preserving. 
		\item The boundary connection $\p\theta\in \Omega^1(\p P,\g)$ is $\p\sigma$-positive (cf.~ Definition \ref{def:taupositive}).
	\end{enumerate}	
The space of $\sigma$-positive connections will be denoted $\A^\sigma(P)$. 
\end{definition}

Note that $\sigma$-positivity is an open condition: If a connection is $\sigma$-positive, then so are its small perturbations.
The pullback of connection to the boundary give a map 
\[ \A^\sigma(P)\to \A^{\wh{\p\sigma}}(\p P).\]
Let $\chi(\Sigma)$ denote the Euler characteristic of $\Sigma$. 
\begin{proposition}\label{prop:eulernumber}
A necessary condition for $(P,\sigma)$ (with given lift \eqref{eq:hattau} of $\p\sigma$) 
to admit a $\sigma$-positive connection $\theta$ is that 
\begin{equation}\label{eq:eulerchar} e(P,\sigma)=\chi(\Sigma).\end{equation}
\end{proposition}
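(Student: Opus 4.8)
The plan is to compute the Euler number $e(P,\sigma) = e(P_H)$ by exhibiting a section of the associated $S^2$-bundle whose zero locus (or, more precisely, whose self-intersection data) can be read off from the geometry of a $\sigma$-positive connection. The key observation is that a $\sigma$-positive connection $\theta$ makes the developing section $\sigma$ into an immersion on each horizontal leaf, orientation-preservingly onto $TS^2$; this is precisely the data of a flat $(G,S^2)$-structure on $\Sigma$ in the sense of Section \ref{subsec:gxstructures}, and hence the projective structure so determined has a developing map $\varphi\colon \wt\Sigma \to S^2$ whose equivariance morphism $\varrho$ is the holonomy of $\theta$. The Euler number of $P_H$ is then the obstruction to reducing further, and should equal the "self-intersection number" of the developing section, i.e. the degree of the Gauss-type map measuring how the tangent plane to the image rotates — which by the transversality condition (a) is controlled by the tangent bundle of $\Sigma$ itself.

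First I would trivialize the picture locally: over a chart $U \subset \Sigma$ a $\sigma$-positive connection gives a chart $\phi\colon U \to S^2$ of the projective structure, and the reduction $P_H \to \Sigma$ is, over $U$, the pullback $\phi^* (\text{frame bundle of } S^2 \text{ at the } H\text{-level})$ — concretely, $P_H|_U$ consists of the choices of vector in $\ker\theta$ mapping to a chosen nonzero tangent vector at $\phi(m)$, so $P_H$ is isomorphic over $U$ to the pullback of the unit tangent bundle of $S^2$ along $\phi$. Globally, condition (a) says $T\sigma|_{\ker\theta}$ identifies the rank-2 bundle $\ker\theta$ with the pullback $\sigma^* T(G/H)$ along $\sigma$ of the vertical tangent bundle of the associated $S^2$-bundle $P \times_G S^2 \to \Sigma$, compatibly with orientations. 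But $\ker\theta \cong \pi^* T\Sigma$ as oriented rank-2 bundles (the connection splits $TP$), so taking Euler classes gives $e(T\Sigma) = \sigma^* e(\text{vertical bundle})$, evaluated on $[\Sigma]$.

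Next I would identify $\sigma^* e(\text{vertical}) \,[\Sigma]$ with $e(P_H)$. The vertical tangent bundle of $P\times_G S^2$ is the bundle associated to $TS^2 = G\times_H \mathfrak{g}/\mathfrak{h}$, and the developing section $\sigma$ corresponds to a reduction to $H$; pulling back the Euler class of the associated vector bundle along a reduction section recovers the Euler class $e(P_H) \in H^2(\Sigma;\Z)$ of the $H$-reduction, because $H = (K\cap H)AN$ retracts onto $SO(2)$ acting as rotations on $\mathfrak{g}/\mathfrak{h} \cong \R^2$. Hence $e(P,\sigma) = e(P_H) = e(T\Sigma)[\Sigma] = \chi(\Sigma)$, which is exactly \eqref{eq:eulerchar}. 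For the boundary case one argues the same way with the chosen trivialization class of $\p P_H$ coming from $\wh{\p\sigma}$: since $B$ is contractible, $\p P_B$ and hence $\p P_H$ is canonically trivialized, and the relative Euler class of $P_H$ against this boundary trivialization equals the relative Euler number of $T\Sigma$ against the boundary framing induced by $\ker\theta$ — which, being the tangent framing along $\p\Sigma$, again computes $\chi(\Sigma)$ by Poincaré–Hopf for surfaces with boundary.

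The main obstacle I anticipate is bookkeeping the boundary contribution carefully: one must check that the boundary trivialization of $P_H$ determined by the lift $\wh{\p\sigma}$ to $G/B$ agrees, under the isomorphism $\ker\theta \cong \pi^*T\Sigma$, with the outward-normal-or-tangential framing of $T\Sigma|_{\p\Sigma}$ for which Poincaré–Hopf gives $\chi(\Sigma)$ rather than some shifted count. This hinges on the fact that the flag lift $\wh{\p\sigma}$ records precisely the first subspace $E_1 = \R\cdot\gamma$ together with $E_2 = \operatorname{span}(\gamma,\gamma')$ of the nondegenerate boundary curve, so the induced framing of the vertical $\R^2$ is the one tangent to $\p\Sigma$ in the correct orientation; I would verify this in the local model \eqref{eq:ds-step1}, where the positivity of $r_1$ pins down the framing's orientation. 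Everything else is a routine Chern–Weil / obstruction-theory computation, and the necessity of the condition is then immediate.
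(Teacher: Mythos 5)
Your proposal is correct and follows essentially the same route as the paper: you identify $T\Sigma$ (via $\ker\theta\cong\pi^*T\Sigma$ and the transversality/orientation condition) with the associated bundle $P_H\times_H T_vS^2$, use that $H$ deformation retracts onto $\SO(2)$ acting by rotations on $T_vS^2\cong\R^2$ to conclude $e(P_H)=e(P_H\times_H T_vS^2)=e(T\Sigma)=\chi(\Sigma)$, and handle the boundary case by matching the $B$-trivialization from $\wh{\p\sigma}$ with the tangential framing of $T\Sigma|_{\p\Sigma}$. The paper's proof is the same argument stated more tersely (deferring the boundary-framing check to the hyperbolic analogue in the cited reference), so no further comparison is needed.
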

\begin{proof}
Suppose first that $\p\Sigma=\emptyset$. 	The action of the stabilizer $H$ of $v=(1\colon 0\colon 0)$ on $T_v S^2\cong \R^2$ restricts to the usual rotation action of $\SO(2)\subset H$. 	
Hence, 
\[ e(P,\sigma)=e(P_H)=e(P_H\times_H T_vS^2)\]
(where the last entry is the usual Euler number of an oriented rank 2 vector bundle). If $\theta$ is a $\sigma$-positive connection, then 
$T\sigma$ defines an isomorphism of oriented vector bundles $T\Sigma\to	P_H\times_H T_vS^2$. Hence, in this case 
$e(P,\sigma)=e(T\Sigma)$ is the Euler characteristic. The argument extends to a possibly non-empty boundary: here 
 $e(T\Sigma)$ is defined using 
a homotopy class of trivializations of $T\Sigma|_{\p\Sigma}$ for which the boundary 
direction $T\p\Sigma$ is fixed. (See \cite[Proposition 4.9]{al:symteich} for details in the hyperbolic case.) 
\end{proof}

\begin{remark}
	For the group $G'=\on{PSU}(1,1)$ acting on the closed Poincar\'{e} disk $\ol{\DD}$, one may similarly consider principal $G'$-bundles $P'$ with 
	equivariant maps
	\begin{equation}\label{eq:sigmamap2}
	\xymatrix{
		P'  \ar[r]^{\sigma'}\ar[d] & \ol{\DD}\\
		\Sigma & 
	}
	\end{equation}
	(We require that $\sigma'$ is a morphism of manifolds with boundary, in the sense that 	
	the pullback of a boundary defining function for $\p\DD$ is a boundary defining function for $\p P'$.)  
	These are again classified by integers $e(P',\sigma')\in \Z$ (see \cite{al:symteich}). Using the 
	inclusion $\ol{\DD}\hra S^2$ from the 
	Klein-Beltrami model, the pair $(P',\sigma')$ lifts to a pair $(P,\sigma)$, with $e(P',\sigma')=e(P,\sigma)$. 
\end{remark}

\subsection{Projective structures from flat connections}
Fix $(P,\sigma)$, together with the lift $\wh{\p\sigma}$,  
satisfying $e(P,\sigma)=\chi(\Sigma)$.  We shall denote by $\Gau(P,\sigma)$ the group of gauge transformations preserving $\sigma$, and such that the boundary restriction preserves the homotopy class of the lift $\wh{\p\sigma}$. The action of 
$\Gau(P,\sigma)$ on connections preserves the subspace of $\sigma$-positive connections. 

Given a \emph{flat} principal connection $\theta$, the first condition in Definition \ref{def:positive}  means that $\sigma$ restricts to a local diffeomorphism from the $\theta$-horizontal leaves to $S^2$. As explained in Section \ref{subsec:gxstructures},
this determines a projective structure on the $\theta$-horizontal foliation, which then descends to $\Sigma$.  If $\Sigma$ has non-empty boundary, the second condition in Definition \ref{def:positive}  ensures that the boundary will be nondegenerate. This gives a canonical map 
\begin{equation}\label{eq:quotientmap}
\A^\sigma_{\on{flat}}(P) \to \on{Proj}(\Sigma).
\end{equation}
\begin{proposition}\label{prop:firstquotient}
	The group $\Gau(P,\sigma)$ acts freely on $\A^\sigma_{\on{flat}}(P)$, with 
	\[ \on{Proj}(\Sigma)=\A^\sigma_{\on{flat}}(P)/\Gau(P,\sigma).\]
\end{proposition}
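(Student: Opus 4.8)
The plan is to verify the two assertions separately: freeness of the $\Gau(P,\sigma)$-action, and the identification of the quotient with $\on{Proj}(\Sigma)$ via the map \eqref{eq:quotientmap}. Throughout, I will use the correspondence between projective structures and triples $(P,\theta,\sigma)$ developed in Section \ref{subsec:gxstructures}: a $\sigma$-positive flat connection $\theta$ is precisely a flat connection for which the transversality condition \eqref{eq:transversality} holds (with correct orientation, and correct boundary behaviour), so that $(P,\theta,\sigma)$ defines a projective structure on $\Sigma$ with nondegenerate boundary, and conversely.

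\textbf{Freeness.} First I would show the action is free. Suppose $u\in\Gau(P,\sigma)$ fixes a $\sigma$-positive flat connection $\theta$. Since $u$ preserves $\theta$, it permutes the $\theta$-horizontal leaves; since $u$ also preserves $\sigma$, i.e. $\sigma\circ u^{-1}=\sigma$, and since $\sigma$ restricts to a \emph{local diffeomorphism} from each horizontal leaf to $S^2$ (this is exactly the first condition in Definition \ref{def:positive}), the map $u$ must act on each leaf covering the identity of $S^2$. A gauge transformation is determined by a single equivariant map $P\to G$; restricting to a horizontal leaf $s(\wt\Sigma)$ and using that $\sigma\circ s=\varphi$ is the developing map together with $\varphi\circ(u\text{-action})=\varphi$ and equivariance, one concludes that the corresponding $G$-valued function is constant equal to the identity, using connectedness of $\wt\Sigma$ and the fact that the stabilizer in $G$ of a developing map (which has nondegenerate image, hence image not contained in any proper projective subspace) is trivial. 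Hence $u=\on{id}$.

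\textbf{Surjectivity and injectivity of the induced map on quotients.} Next I would show \eqref{eq:quotientmap} descends to a bijection $\A^\sigma_{\on{flat}}(P)/\Gau(P,\sigma)\to\on{Proj}(\Sigma)$. For surjectivity: given a projective structure on $\Sigma$ with nondegenerate boundary, Section \ref{subsec:gxstructures} produces a canonical triple $(P',\theta',\sigma')$ with a flat connection satisfying transversality; the lift $\wh{\p\sigma'}$ is the canonical one from \eqref{eq:gammalift}. By Proposition \ref{prop:eulernumber} the invariant $e(P',\sigma')$ equals $\chi(\Sigma)$, which is also $e(P,\sigma)$; since isomorphism classes of pairs $(P,\sigma)$ (equivalently principal $H$-bundles $P_H$, with the boundary trivialization data) over $\Sigma$ are classified by this integer, there is a bundle isomorphism $(P',\sigma')\cong(P,\sigma)$ compatible with the boundary lifts up to homotopy; transporting $\theta'$ through this isomorphism and, if necessary, adjusting the boundary lift by an element of $\Gau(P,\sigma)$ as in Claim 1 of the proof of Proposition \ref{prop:ds}, gives a $\sigma$-positive flat connection on $P$ mapping to the given projective structure. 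For injectivity: if two $\sigma$-positive flat connections $\theta_0,\theta_1$ yield the same projective structure, then the two triples $(P,\theta_i,\sigma)$ are isomorphic as triples-with-developing-section by the uniqueness statement in Section \ref{subsec:gxstructures} (the triple is canonically recovered from the projective structure); such an isomorphism is a gauge transformation preserving $\sigma$ and the boundary lift, i.e. an element of $\Gau(P,\sigma)$ carrying $\theta_0$ to $\theta_1$.

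\textbf{Main obstacle.} I expect the main difficulty to be the bookkeeping around the boundary: one must check that the lift $\wh{\p\sigma}$ and its homotopy class interact correctly with everything — that the canonical flag lift of a nondegenerate boundary curve is homotopic to the chosen $\wh{\p\sigma}$ (this is where $\p\sigma$-positivity, part (b) of Definition \ref{def:positive}, enters), that $\Gau(P,\sigma)$ is large enough to absorb the homotopy-class ambiguity (using an argument in the spirit of Claim 1 in Proposition \ref{prop:ds}, where a loop in $L_\oz(K\cap H)$ is used to normalize the flag lift), and that diffeomorphisms isotopic to the identity fixing $\p\Sigma$ play no role here because we are working upstairs before quotienting by $\on{Diff}_\oz(\Sigma,\p\Sigma)$. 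Away from the boundary the argument is the standard rigidity of $(G,X)$-structures; the boundary refinement is the genuinely new point.
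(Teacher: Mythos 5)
Your proposal is correct and follows essentially the same route as the paper: freeness via rigidity of the developing section (a gauge transformation fixing a $\sigma$-positive flat connection is constant along horizontal leaves, and no nontrivial element of $G$ fixes the open image of a developing map pointwise), and surjectivity via the Euler-number classification of the pairs $(P,\sigma)$. The only differences are presentational --- the paper localizes the freeness argument to a single projective chart where you argue globally on leaves, and you spell out the injectivity of the induced map on quotients, which the paper leaves implicit.
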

\begin{proof}
	If two connections $\theta,\theta'$ are related by a gauge transformation in $\Gau(P,\sigma)$, the resulting projective structure does not change. This shows that \eqref{eq:quotientmap} is constant along  $\Gau(P,\sigma)$-orbits. As explained above, 
	every projective structure with nondegenerate boundary determines a triple $(P_1,\sigma_1,\theta_1)$, where $e(P_1,\sigma_1)=\chi(\Sigma)$. In particular, there is an isomorphism of principal $H$-bundles 
	$(P_1)_H\to P_H$.  (If the boundary is non-empty, then the  isomorphism can be chosen to be compatible with the trivializations along the boundary.) In turn, it  determines an isomorphism $P_1\to P$ taking $\sigma_1$ to $\sigma$. This shows that the map \eqref{eq:quotientmap} is surjective.   
	
	It remains to show that the action is free. By choosing projective charts, it suffices to prove this claim for the standard projective structure on an open subset $U\subset S^2$, with $P=U\times G$, $\sigma(m,g)=g^{-1}\cdot m$ and with the 
	trivial connection, given by the connection 1-form $\mathsf{A}=0$. Suppose a gauge transformation $\psi\in C^\infty(U,G)$ 
	preserves $\sigma$, and satisfies $\psi\bullet A=0$. The second condition means that  $\psi$ is a \emph{constant} gauge transformation, given by $(m,g)\mapsto (m,ag)$ for some \emph{fixed} $a\in G$. But then $(\psi\cdot\sigma)(m,g)=
	\sigma(m,a^{-1}g)=g^{-1}\cdot am$ equals $\sigma(m,g)$ for all $m,g$ if and only if $a\cdot m=m$ for all $m\in U$. That is, $a=e$. 
\end{proof}
For a trivial bundle $P=\Sigma\times G$, principal connections $\theta$ are described in terms of their connection 1-forms $\mathsf{A}$, while  
$\sigma(m,g)=g^{-1}\cdot f(m)$ for a map $f\colon \Sigma\to S^2$.  If $f(m)=\psi(m)\cdot (1\colon 0\colon 0)$ for some $\psi\colon \Sigma\to G$, we can use a gauge transformation by $\psi^{-1}$ to 
replace $f$ with the constant function to the base point $(1\colon 0\colon 0)\in S^2$.

 \begin{example}
 	The tautological projective structure on $S^2$ is described by the trivial bundle $S^2\times G$ and trivial connection $\mathsf{A}=0$, with 
 	$\sigma(m,g)=g^{-1}\cdot m$. On the affine chart $\Sigma=\{(1:u:v)|\ u,v\in \R\}$, we have 
 	$m=\psi(m)\cdot (1\colon 0\colon 0)$ with 
 	\[ \psi(u,v)=\left(\begin{array}{ccc} 1 & 0 &0 \\ u&1&0\\ v & 0& 1
 	\end{array}\right).\]
   After gauge transformation by $\psi^{-1}$, we see that the projective structure is also described by 
   $\sigma(m,g)=g^{-1}\cdot (1\colon 0\colon 0)$ and 
 	\begin{equation}\label{eqref:A}
 	\mathsf{A}=\psi^{-1}\d\psi=\left(\begin{array}{ccc} 0 & 0 &0 \\ \d u&0&0\\ \d v & 0 & 0
 	\end{array}\right).
 	\end{equation}	
 \end{example}

\begin{example}[Principal annulus]
We use the notation from from Example \ref{ex:principalannulus}.  By construction, the annulus $\Sigma$ is a quotient of 
the plane $\wt{\Sigma}=\R\times \R$, with coordinates $(x,y)$, with developing map 
$\varphi\colon \wt{\Sigma}\to S^2$ given by 
\[\varphi(x,y)= (e^{xc_1}:-e^{xc_2}y:e^{xc_3})\]
This may be written as $\varphi=\psi\cdot (1\colon 0\colon 0)$ 
with 
\[\psi(x,y)=\left(\begin{array}{ccc} e^{xc_1}& 0 &0 \\ 0&e^{xc_2}&0\\ 0 & 0 & e^{xc_3}
\end{array}\right)
\left(\begin{array}{ccc} 1 & 0 &0 \\ -y&1&0\\ 1 & 0& 1
\end{array}\right).\]
After gauge transformation by $\psi^{-1}$, we obtain the 
connection 1-form $\mathsf{A}=\psi^{-1}\d\psi$, 
\begin{equation}\label{eq:Aforannulus}
\mathsf{A}=
\left(\begin{array}{ccc} c_1& 0 &0 \\ -y(c_2-c_1)&c_2&0\\ (c_3-c_1) & 0 & c_3
\end{array}\right)\d x-
\left(\begin{array}{ccc} 0 & 0 &0 \\ 1&0&0\\ 0 & 0 & 0
\end{array}\right)\ \d y.
\end{equation}
\end{example}

\begin{proposition}
	Let $P=\Sigma\times G$ with developing section $\sigma(m,g)=g^{-1}\cdot (1\colon 0\colon 0)$. A connection 
	1-form $\mathsf{A}\in \Omega^1(\Sigma,\g)$ is $\sigma$-positive if and only if 
	\[ \mathsf{A}_{21}\wedge \mathsf{A}_{31}\in \Omega^2(\Sigma)\]
	is a volume form, compatible with the given orientation. 
\end{proposition}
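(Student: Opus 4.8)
The plan is to compute the transversality condition of Definition \ref{def:positive}(a) directly in coordinates, reducing it to a statement about the two-form $\mathsf{A}_{21}\wedge\mathsf{A}_{31}$. First I would recall that for the trivial bundle $P=\Sigma\times G$ with $\sigma(m,g)=g^{-1}\cdot v_0$, where $v_0=(1\colon 0\colon 0)$, the horizontal subspace $\ker\theta$ at a point $(m,g)$ is the graph of $-\mathsf{A}_m\colon T_m\Sigma\to\g$ inside $T_m\Sigma\oplus\g$ (identifying the vertical tangent space with $\g$ via right translation). The developing map along the horizontal leaf through $(m,e)$ is then $m\mapsto \sigma(m,h(m))=h(m)^{-1}\cdot v_0$, where $h$ solves $h^{-1}\d h=\mathsf{A}$. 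Hence $T\sigma|_{\ker\theta}$ at $m$, pushed to $T_{v_0}S^2$, is (up to the action of $h$) given by $X\mapsto -\rho(\mathsf{A}_m(X))\cdot v_0$, where $\rho$ denotes the infinitesimal action of $\g$ on $S^2$ and we use that $T_{v_0}S^2\cong\g/\h$.

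Next I would identify $T_{v_0}S^2$ explicitly. Since $H$ is the stabilizer of $v_0=e_1$, with $\h$ consisting of matrices whose first column is a multiple of $e_1$, the quotient $\g/\h$ is spanned by the images of the elementary matrices $E_{21}$ and $E_{31}$ (the entries in the first column below the diagonal). Concretely $\rho(\xi)\cdot v_0$ is represented by the vector $(\xi_{21},\xi_{31})\in\R^2$ under this basis. Therefore $T\sigma|_{\ker\theta}(X)$ corresponds to $-(\mathsf{A}_{21}(X),\mathsf{A}_{31}(X))$, and the condition $\ker\theta\cap\ker(T\sigma)=0$ at $m$ says precisely that the linear map $X\mapsto(\mathsf{A}_{21}(X),\mathsf{A}_{31}(X))$ from $T_m\Sigma$ to $\R^2$ is an isomorphism. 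This is equivalent to $\mathsf{A}_{21}\wedge\mathsf{A}_{31}\neq 0$ at $m$, i.e.\ that it is a volume form; the orientation-preserving requirement in Definition \ref{def:positive}(a) pins down the sign, matching the orientation on $S^2$ used to orient $T_{v_0}S^2\cong\R^2$ via the basis $E_{21},E_{31}$.

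Finally I would address the boundary condition, Definition \ref{def:positive}(b): one must check that $\p\sigma$-positivity of the boundary connection follows once $\mathsf{A}_{21}\wedge\mathsf{A}_{31}$ is a positive volume form near $\p\Sigma$. Here I would invoke Claim 2 in the proof of Proposition \ref{prop:ds}: after the normalization there, the $\wh{\p\sigma}$-positive boundary connections are exactly those whose connection matrix has strictly positive subdiagonal entries $r_1,\dots,r_{n-1}$, and in our $n=3$ situation the relevant positivity reduces to positivity of $\mathsf{A}_{21}$ restricted to $\p\Sigma$ together with the transversality already established; the subdiagonal positivity is automatic from the volume-form condition once we note that along $\p\Sigma$ the one-form $\mathsf{A}_{31}$ must have nonzero restriction (otherwise $\mathsf{A}_{21}\wedge\mathsf{A}_{31}$ would vanish on boundary directions), so one may choose the standard lift $\wh{\p\sigma}$ to be compatible.

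The main obstacle I expect is the orientation bookkeeping: making sure that the orientation on $T_{v_0}S^2$ induced by the ordered basis $(E_{21}\cdot v_0, E_{31}\cdot v_0)$ agrees with the orientation of $S^2$ fixed earlier (via the triangulation / standard affine chart $(1\colon u\colon v)$), so that ``orientation preserving'' in Definition \ref{def:positive}(a) translates without a sign error into ``$\mathsf{A}_{21}\wedge\mathsf{A}_{31}$ compatible with the orientation of $\Sigma$''. Comparing with the explicit model computation \eqref{eqref:A}, where $\mathsf{A}_{21}=\d u$ and $\mathsf{A}_{31}=\d v$ and $\d u\wedge\d v$ is the standard orientation of the affine chart, resolves this and serves as a consistency check; the reconciliation of the boundary normal form with this chart convention is the only slightly delicate point.
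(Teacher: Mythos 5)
Your treatment of condition (a) of Definition \ref{def:positive} is essentially the paper's own proof: compute the horizontal lift $v\mapsto(v,-\iota_v\mathsf{A})$, identify $T_{v_0}S^2\cong\g/\h$ with the span of the $(2,1)$ and $(3,1)$ entries, and conclude that transversality plus orientation is equivalent to $\mathsf{A}_{21}\wedge\mathsf{A}_{31}$ being a positive volume form. (Your intermediate sign differs from the paper's because you take $h^{-1}\d h=\mathsf{A}$ for the horizontal section rather than $\d h\,h^{-1}=-\mathsf{A}$; the two minus signs in the correct convention cancel, and your consistency check against \eqref{eqref:A} does land on the right overall sign.)

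The genuine problem is your third paragraph. Condition (b) of Definition \ref{def:positive} — $\p\sigma$-positivity of the boundary connection, i.e.\ nondegeneracy of the boundary curve — does \emph{not} follow from the volume-form condition, and your argument for it fails. Nondegeneracy of a curve in $S^2$ is a second-order condition: in the normal form of Claim 2 of Proposition \ref{prop:ds} it requires positivity of \emph{both} subdiagonal entries $r_1$ and $r_2$, and the entry $r_2$ (living in the $(3,2)$ slot) is entirely invisible to $\mathsf{A}_{21}\wedge\mathsf{A}_{31}$, which only sees the first column. Concretely, near a boundary component $\{v=0\}$ with coordinates $(u,v)$, the flat connection $\mathsf{A}=E_{21}\,\d u+E_{31}\,\d v$ has $\mathsf{A}_{21}\wedge\mathsf{A}_{31}=\d u\wedge\d v$ a positive volume form, yet its boundary restriction integrates to the curve $t\mapsto(1\colon t\colon 0)$, which is degenerate. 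Your claim that ``$\mathsf{A}_{31}$ must have nonzero restriction to $\p\Sigma$'' is also false in this example (and in the paper's own half-annulus example, where $\mathsf{A}_{31}=-\d v$ up to the gauge factor vanishes on boundary directions). The correct reading — consistent with the paper's proof, which addresses only the transversality clause — is that the proposition characterizes condition (a); condition (b) is an independent open requirement on the $1$-jet of $\mathsf{A}$ along $\p\Sigma$ and must be imposed separately, as the paper does in its subsequent example by exhibiting the boundary connection in Drinfeld--Sokolov normal form.
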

\begin{proof}
	In terms of $T_{(m,e)}P=T_m\Sigma\times\g$, the horizontal lift is given by $v\mapsto (v,-\iota_v(\mathsf{A}))$. On the other hand, the 
	differential of the map $G\to S^2,\ g\mapsto g^{-1}\cdot (1\colon 0\colon 0)$ at $e$ takes  $\xi\in \g$ to 
	$-(\xi_{21},\xi_{31})^\top\in \R^2$ (the last two entries from the first column of $\xi$). Hence, the condition is that 
	$v\mapsto \iota(v)(\mathsf{A}_{21},\mathsf{A}_{31})^\top$ is an orientation preserving isomorphism. 
	Equivalently, $\mathsf{A}_{21}\wedge \mathsf{A}_{31}$ is a volume form. 
\end{proof}

\begin{example}
Let $L\in \ca{R}_3(S^1)$ be a differential operator  $L=\partial^3+a_1\partial+a_0$ with coefficient functions $a_0, a_1$. Consider the half-annulus $S^1\times [0,\infty)$ with coordinates $u,v$. 
The connection 1-form 
\[ \mathsf{A}=\left(\begin{array}{ccc} 1 & 0 &0 \\ 0&1&0\\ -v & 0 & 1
\end{array}\right)\bullet 
\left(\begin{array}{ccc} 0&0&-a_0\\ 1&0& -a_1\\ 0  & 1 & 0
\end{array}\right)\d u\]
(where $\bullet$ indicates a gauge transformation, $g\bullet \mathsf{A}=\Ad_g \mathsf{A}-\d g g^{-1}$)
is flat, and is $\sigma$-positive near the boundary. (A calculation shows that $\mathsf{A}_{21}\wedge \mathsf{A}_{31}=(1+v a_0(u))\d u\wedge\d v$.) 
Its pullback to the boundary is the Drinfeld-Sokolov normal form of the connection corresponding to $L$.  
\end{example}

\subsection{Construction of the symplectic structure}
For the time being, let $G$ be any Lie group admitting an $\Ad$-invariant nondegenerate symmetric bilinear form $\cdot$ on its Lie algebra $\g$. For every principal $G$-bundle  $P\to \Sigma$ over a compact oriented surface (possibly with boundary), the choice of this bilinear form defines the \emph{Atiyah-Bott symplectic structure} \cite{at:mo} on the space $\A(P)$ of principal connections. 
Identifying the tangent spaces to $\A(P)$ with 1-forms valued in the adjoint bundle $\g(P)=P\times_G\g$, the Atiyah-Bott form is given by wedge product followed by integration:  $\int_\Sigma \alpha\cdot \beta$ for $\alpha,\beta\in \Omega^1(\Sigma,\g(P))$. 
 The symplectic structure is preserved under the action 
 \[  \on{Aut}_+(P)\circlearrowright \A(P)\]
 of the group of principal bundle automorphisms with orientation preserving base map. The action admits an affine moment map, 
 with a bulk contribution involving the curvature $F^\theta\in \Omega^2(\Sigma,\g(P))$, and a boundary contribution involving the pullback $\partial\theta$ of the connection. In more detail, identify the Lie algebra $\mf{aut}(P)$ with sections of the Atiyah algebroid $\on{At}(P)=TP/G$, and recall that  a connection defines a splitting $s^\theta\colon \on{At}(P)\to \g(P)$. Then  the  moment map component for $\zeta\in \mf{aut}(P)$ is given by 
 %
 \begin{equation}\label{eq:gaugemoment}
 \A(P)\to \R,\ \ \theta\mapsto -\int_\Sigma F^\theta\cdot s^\theta(\zeta)+\int_{\p\Sigma}\mbox{bdry terms}.
 \end{equation}
 The boundary terms depend only on $\p\theta$ and $\zeta|_{\p\Sigma}$, and 
 vanish if $\zeta$ vanishes along $\p\Sigma$.    
  See \cite[Appendix B]{al:symteich} for a detailed discussion. 
  The Atiyah-Bott structure on connections descends to a symplectic structure on 
 \begin{equation}\label{eq:loopgroupspace}
 \M(P)=\A_{\on{flat}}(P)/\Gau(P,\p P),
 \end{equation}
 the moduli space  of flat connections, up to  gauge transformations that are trivial along the boundary. This space 
 carries a  residual gauge action of  $\Gau_\oz(\p P)$, with moment map $\M(P)\to \A(\p P),\ [\theta]\mapsto \p\theta$. 
 
In our case, where  $G=\SL(3,\R)$, we take the nondegenerate symmetric bilinear form on $\g=\mf{sl}(3,\R)$ to be the trace form $(\xi,\eta)\mapsto \on{tr}(\xi\eta)$.  
 By Proposition \ref{prop:firstquotient}, we have that  $\on{Proj}(\Sigma)$ is the quotient of 
 $\A^\sigma_{\on{flat}}(P)$ under $\Gau(P,\sigma)$, and 
 $\mathfrak{P}(\Sigma)$ is obtained by taking another quotient by $\Diff_\oz(\Sigma,\p\Sigma)$. 
 This  suggests a construction of the symplectic form on $\mathfrak{P}(\Sigma)$ via reduction with respect to 
 the subgroup of  automorphisms preserving $\sigma$, and with 
 base map in $\Diff_\oz(\Sigma,\p\Sigma)$. However, if $\p\Sigma$ has non-empty boundary, this is not  directly a symplectic quotient: 
 automorphisms whose base map fixes the boundary might still act non-trivially on $P|_{\p\Sigma}$, and 
 hence will produce boundary terms in the moment map. 
 
 Following the strategy in \cite{al:symteich}, we introduce an intermediate space
\begin{equation}\label{eq:fiberproduct}
\M^\sigma(P)=\A^\sigma_{\on{flat}}(P)/\Aut_\oz(P,\p P,\sigma)
\end{equation}
where  $\Aut_\oz(P,\p P,\sigma)\subset \Aut_+(P,\sigma)$ are the automorphisms
preserving $\sigma$, with base map homotopic to the identity, and fixing 
$\p P$ \emph{pointwise}. This space carries a residual action of $\Gau_\oz(\p P,\p\sigma)$, and an equivariant map to 
$\A^{\p\sigma}(\p P)\subset \A(\p P)$. 

\begin{proposition}\label{lem:hatdef}
	The space \eqref{eq:fiberproduct} is a symplectic reduction, 
	\begin{equation}\label{eq:hatdef}
	\M^\sigma(P)=\A^\sigma(P)\qu \Aut_\oz(P,\p P,\sigma).\end{equation}
The inclusion of $\sigma$-positive flat connections descends to a $\Gau_\oz(\p P,\p\sigma)$-equivariant local 
symplectomorphism,  
\begin{equation}\label{eq:amap} \M^\sigma(P)\to \M(P)\end{equation}
%
intertwining the maps to $ \A(\p P)$. 
\end{proposition}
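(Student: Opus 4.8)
The plan is to realize $\M^\sigma(P)$ as a Marsden--Weinstein reduction of the open set $\A^\sigma(P)\subset\A(P)$, following the strategy of \cite{al:symteich}, and then to read the map to $\M(P)$ off the inclusion of $\sigma$-positive flat connections. First I would identify the moment map for the $\Aut_\oz(P,\p P,\sigma)$-action on $\A^\sigma(P)$: it is the restriction of the affine Atiyah--Bott moment map \eqref{eq:gaugemoment}, and since every element of $\Aut_\oz(P,\p P,\sigma)$ fixes $\p P$ pointwise, the boundary terms in \eqref{eq:gaugemoment} vanish, leaving
\[ \mu\colon \A^\sigma(P)\to \aut_\oz(P,\p P,\sigma)^*,\qquad \mu(\theta)(\zeta)=-\int_\Sigma F^\theta\cdot s^\theta(\zeta). \]

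The step I expect to carry the most weight is the following consequence of $\sigma$-positivity. Viewing $\sigma$ as a reduction $P_H\subset P$, one has $\aut(P,\sigma)=\Gamma(\At(P_H))$, with $\At(P_H)\subset\At(P)$ a subbundle of rank $\dim\Sigma+\dim H=2+6=8=\dim\g$. The splitting $s^\theta\colon\At(P)\to\g(P)$ has kernel the horizontal subbundle, while at a point $p\in P_H$ the fibre of $\At(P_H)$ is $T_pP_H/H=\ker(T_p\sigma)/H$; hence $\At(P_H)\cap\ker(s^\theta)$ corresponds fibrewise to $\ker(T\sigma)\cap\ker\theta$, which vanishes by Definition \ref{def:positive}(a). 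By the rank count, $s^\theta$ therefore restricts to a \emph{fibrewise isomorphism} $\At(P_H)\xrightarrow{\ \sim\ }\g(P)$. Consequently, as $\zeta$ runs over $\aut_\oz(P,\p P,\sigma)$ the section $s^\theta(\zeta)$ runs over exactly the sections of $\g(P)$ vanishing on $\p\Sigma$; since the trace form is nondegenerate, $\mu(\theta)=0$ if and only if $F^\theta=0$, so $\mu^{-1}(0)=\A^\sigma_{\on{flat}}(P)$. Freeness of the action on this level set follows as in Proposition \ref{prop:firstquotient}: an automorphism in $\Aut_\oz(P,\p P,\sigma)$ fixing a $\sigma$-positive flat $\theta$ preserves the associated projective structure and fixes $\p\Sigma$ pointwise, so its base diffeomorphism restricts to the identity on each boundary curve; as those curves are nondegenerate, the local developing data pins the base map down as the identity, and Proposition \ref{prop:firstquotient} then forces the automorphism itself to be trivial. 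Applying reduction to this free action at level $0$ yields the reduced $2$-form on $\M^\sigma(P)=\mu^{-1}(0)/\Aut_\oz(P,\p P,\sigma)$, which is \eqref{eq:hatdef}.

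For the map \eqref{eq:amap}, I would first check that the inclusion $\A^\sigma_{\on{flat}}(P)\hookrightarrow\A_{\on{flat}}(P)$ descends. If $\theta'=F^*\theta$ with $F\in\Aut_\oz(P,\p P,\sigma)$, choose a path $F_t$ from the identity to $F$ inside $\Aut_\oz(P,\p P,\sigma)$; then $\theta_t=F_t^*\theta$ is a path of flat connections, all agreeing with $\theta$ along $\p P$, with $\dot\theta_t=-d_{\theta_t}(s^{\theta_t}\zeta_t)$ where $\zeta_t\in\aut_\oz(P,\p P,\sigma)$, so $s^{\theta_t}\zeta_t$ is a section of $\g(P)$ vanishing on $\p\Sigma$; integrating, $\theta$ and $\theta'$ differ by a gauge transformation in $\Gau_\oz(P,\p P)$ and hence define the same class in $\M(P)$. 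Equivariance for $\Gau_\oz(\p P,\p\sigma)$ is then immediate, since the residual action on $\M^\sigma(P)$ is induced by pulling back under a $\sigma$-preserving bulk extension of a boundary gauge transformation, which is in particular a valid bulk extension for the $\M(P)$-action, and the map itself is induced by the identity on connections; compatibility with the two maps to $\A(\p P)$, $[\theta]\mapsto\p\theta$, is immediate for the same reason.

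Finally I would show \eqref{eq:amap} is a local symplectomorphism. Since $\A^\sigma_{\on{flat}}(P)$ is open in $\A_{\on{flat}}(P)$, at a flat $\sigma$-positive $\theta$ both spaces have tangent space $\{a\in\Omega^1(\Sigma,\g(P)):d_\theta a=0\}$; and by the isomorphism $\At(P_H)\cong\g(P)$ together with $F^\theta=0$ (so that no curvature term appears in the infinitesimal action), the infinitesimal $\aut_\oz(P,\p P,\sigma)$-action coincides with $\{d_\theta\xi:\ \xi\in\Gamma(\g(P)),\ \xi|_{\p\Sigma}=0\}$, which is precisely the infinitesimal $\Gau_\oz(P,\p P)$-action defining $\M(P)$. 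Hence $\M^\sigma(P)$ and $\M(P)$ have identified tangent spaces at corresponding points, so \eqref{eq:amap} is a local diffeomorphism. That it is symplectic follows from the commuting square relating the quotient maps $\A^\sigma_{\on{flat}}(P)\to\M^\sigma(P)$ and $\A_{\on{flat}}(P)\to\M(P)$ to the inclusion of connections: both pullbacks of the respective reduced $2$-forms to $\A^\sigma_{\on{flat}}(P)$ equal the restriction of the Atiyah--Bott form, and a quotient map is a surjective submersion, so the forms agree; nondegeneracy of the form on $\M^\sigma(P)$ is then inherited from that on $\M(P)$. The genuine obstacles are the bundle-theoretic transversality step above (where the orientation condition in Definition \ref{def:positive}(a) must also be tracked, so that the isomorphism $\At(P_H)\cong\g(P)$ is the correct one) and the usual functional-analytic care needed to justify the infinite-dimensional reduction, for which I would follow the treatment in \cite{al:symteich}.
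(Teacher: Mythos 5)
Your proof is correct and follows essentially the same route as the paper: restrict the Atiyah--Bott moment map (boundary terms vanish since $\p P$ is fixed pointwise), use $\sigma$-positivity to show $s^\theta|_{\on{At}(P_H)}$ is an isomorphism so that the zero level set is exactly $\A^\sigma_{\on{flat}}(P)$, and then identify the $\Aut_\oz(P,\p P,\sigma)$- and $\Gau(P,\p P)$-orbit directions on flat connections to get the local symplectomorphism to $\M(P)$. The extra details you supply (the rank count for the fibrewise isomorphism, the integration of the infinitesimal action into $\Gau_\oz(P,\p P)$, and the freeness argument) are consistent with, and slightly more explicit than, the paper's treatment.
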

\begin{proof}
	The Lie algebra of $\on{Aut}(P,\sigma)$ are the $G$-invariant vector fields on $P$ preserving $\sigma$, or equivalently are tangent to $P_H$. The vector fields are the sections of a sub-Lie algebroid $\on{At}(P,\sigma)\cong \on{At}(P_H)$ of the Atiyah algebroid $\on{At}(P)=TP/G$. The space 
	$\mf{aut}(P,\p P,\sigma)$ consists of sections $\zeta$ of  $\on{At}(P,\sigma)$ vanishing along $\p\Sigma$. The moment map for $\Aut_\oz(P,\p P,\sigma)$ is obtained from \eqref{eq:gaugemoment} by restriction, and hence is simply 
	\[ \theta\mapsto -\int_\Sigma F^\theta\cdot s^\theta(\zeta),\] 
	not involving a 
	boundary term. 	A connection $\theta$ satisfies the first condition in Definition \ref{def:positive} if and only if the restriction 
	\begin{equation}\label{eq:siso} s^\theta|_{\on{At}(P,\sigma)}\colon \on{At}(P,\sigma)\to \g(P)\end{equation}
	is an isomorphism (cf.~ \cite[Proof of Proposition 6.2]{al:symteich}). We hence see that the integral vanishes for all $\zeta\in \mf{aut}(P,\p P,\sigma)$ if and only if $F^\theta=0$, i.e, $\theta$ is flat. That is, the zero level set for the 
	$\Aut_\oz(P,\p P,\sigma)$-action is exactly $\A^\sigma_{\on{flat}}(P)$, with null foliation given by 
	$\Aut_\oz(P,\p P,\sigma)$-orbit directions. 
	
On the other hand, the null foliation of $\A_{\on{flat}}(P)$ is also given by the $\Gau(P,\p P)$-orbit directions. It follows that locally, the 
	$\Aut_\oz(P,\p P,\sigma)$-orbit directions and  $\Gau(P,\p P)$-orbit directions coincide. 
	Every element 
	of  $\Aut_\oz(P,\p P,\sigma)$ is homotopic to an element of $\Gau(P,\p P,\sigma)$, by deforming the base map. Thus, if two $\sigma$-positive connections are in the same $\Aut_\oz(P,\p P,\sigma)$-orbit then they are also in the same 
	$\Gau(P,\p P)$-orbit. 
	Hence, the inclusion of $\sigma$-positive connections 
	descends to a local symplectomorphism \eqref{eq:amap}. 
\end{proof}

The deformation space of projective structures is obtained as the quotient 
 \begin{equation}\label{eq:defsigma}
\mathfrak{P}(\Sigma)=\M^\sigma(P)/\Gau_\oz(\p P,\p\sigma).
\end{equation}
The symplectic structure does not directly descend since it is not $\Gau_\oz(\p P,\p\sigma)$-basic. 
However, note that the moment map for the action of $\Gau_\oz(\p P,\p\sigma)$ is given by the map 
$\M^\sigma(P)\to \A(\p P),\ \ [\theta]\mapsto \p\theta$ followed by the quotient map to $\A(\p\P,\p\sigma)$:
\begin{equation}\label{eq:pt}
 \M^\sigma(P)\to  \A(\p P)/\on{ann}(\gau(\p P,\p\sigma)),\ \ [\theta]\mapsto [\p\theta]
\end{equation} 
Here the brackets indicate equivalence classes of connections. By Proposition \ref{prop:singleorbit}, the 
 image of \eqref{eq:pt} is a single coadjoint $\Gau(\p P,\p\sigma)$-orbit 
 \begin{equation}\label{eq:O}
 \O=\A^{\p\sigma-\on{reg}}(\p P)/\on{ann}(\gau(\p P,\p\sigma)).
 \end{equation}
 Using the `shifting trick' from symplectic geometry, we may therefore write \eqref{eq:defsigma} as a symplectic quotient
 with respect to $\O$. To summarize: 
\begin{proposition}\label{prop:shiftingtrick} 
	The space $\mathfrak{P}(\Sigma)$ is a 
	 symplectic quotient, 
	\begin{equation}\label{eq:symquot}
	\mathfrak{P}(\Sigma)=	\mathfrak{P}(\Sigma)_\O=	
	(\M^\sigma(P)\times \O^-)\qu \Gau_\oz(\p P,\p\sigma).
	\end{equation}
\end{proposition}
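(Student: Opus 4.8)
The plan is to obtain this as a direct application of the \emph{shifting trick} of symplectic reduction, in its version for affine moment maps. Recall the general principle: if a group $K$ acts in a Hamiltonian way on a symplectic manifold $(M,\omega)$ with affine-equivariant moment map $\Phi\colon M\to\gau^*$ (here $\gau=\operatorname{Lie}K$), and $\O$ is an affine-coadjoint orbit carrying its Kirillov--Kostant--Souriau form, then the orbit reduction $\Phi^{-1}(\O)/K$ is symplectomorphic to the point reduction $(M\times\O^-)\qu_{0}K$: the $K$-orbit of $m\in\Phi^{-1}(\O)$ corresponds to the $K$-orbit of $(m,\Phi(m))$, and concretely the zero level set of the moment map for the diagonal $K$-action on $M\times\O^-$ is the graph of $\Phi|_{\Phi^{-1}(\O)}$, which is $K$-equivariantly identified with $\Phi^{-1}(\O)$.

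First I would collect the inputs. By \eqref{eq:defsigma} one has $\mathfrak{P}(\Sigma)=\M^\sigma(P)/\Gau_\oz(\p P,\p\sigma)$. By Proposition \ref{lem:hatdef}, $\M^\sigma(P)$ is symplectic and carries a Hamiltonian action of $K:=\Gau_\oz(\p P,\p\sigma)$ whose moment map, after using the trace form to identify $\gau(\p P,\p\sigma)^*$ with $\A(\p P)/\on{ann}(\gau(\p P,\p\sigma))$, is the map $\Phi\colon[\theta]\mapsto[\p\theta]$ of \eqref{eq:pt}. Proposition \ref{prop:singleorbit}, applied to each boundary circle, asserts that the image of $\Phi$ is the single affine-coadjoint orbit $\O$ of \eqref{eq:O}; equivalently $\Phi^{-1}(\O)=\M^\sigma(P)$. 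The shifting trick with $M=\M^\sigma(P)$ then gives
\[
(\M^\sigma(P)\times\O^-)\qu\Gau_\oz(\p P,\p\sigma)\ \cong\ \Phi^{-1}(\O)/K\ =\ \M^\sigma(P)/K\ =\ \mathfrak{P}(\Sigma),
\]
which equips $\mathfrak{P}(\Sigma)$ with a symplectic form; a different choice of $(P,\sigma)$ and of the lift \eqref{eq:hattau} gives a canonically isomorphic picture, so the form is independent of these auxiliary data.

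It then remains to argue that the reduction on the left is a genuine (infinite-dimensional) symplectic manifold. For smoothness of the quotient I would use that $K$ acts freely on $\M^\sigma(P)$, which is inherited from the freeness of $\Gau(P,\sigma)$ on $\A^\sigma_{\on{flat}}(P)$ in Proposition \ref{prop:firstquotient} together with the freeness of the $K$-action on $\O$ recorded in Proposition \ref{prop:singleorbit}; this rules out extra stabilizers appearing when passing from $\A^\sigma_{\on{flat}}(P)$ to $\M^\sigma(P)$. Nondegeneracy of the reduced $2$-form is the standard matching of null directions: on the zero level set (the graph of $\Phi$) the form $\pr_1^*\omega_{\M^\sigma}+\pr_2^*\omega_{\O^-}$ has null foliation exactly the $K$-orbit directions, since $\Phi$ is a submersion onto $\O$ and the $K$-orbits exhaust $T\O$ (Proposition \ref{prop:singleorbit} once more).

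The main obstacle is this last, analytic point: one must know that $\M^\sigma(P)$, the orbit $\O$, and the Fr\'echet--Lie group $K$ are tame enough for the Marsden--Weinstein procedure to apply---that $\Phi$ is a submersion with a well-behaved level set, that the $K$-action admits slices, and that the resulting form is weakly nondegenerate. As in \cite{al:symteich}, I would dispatch these using the explicit models already at hand: the Drinfeld--Sokolov slice of Proposition \ref{prop:ds} makes $\O$ and its $K$-action completely concrete, and the local symplectomorphism $\M^\sigma(P)\to\M(P)$ of Proposition \ref{lem:hatdef} transports questions about $\M^\sigma(P)$ to the well-understood moduli space of flat connections over a surface. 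Granting these, the shifting-trick identification is purely formal.
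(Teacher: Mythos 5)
Your proposal is correct and follows essentially the same route as the paper: the paper's "proof" is precisely the discussion preceding the proposition, namely that the moment map for the $\Gau_\oz(\p P,\p\sigma)$-action on $\M^\sigma(P)$ is the boundary restriction \eqref{eq:pt}, that its image is the single orbit $\O$ by Proposition \ref{prop:singleorbit}, and that the shifting trick then identifies the plain quotient \eqref{eq:defsigma} with the symplectic quotient \eqref{eq:symquot}. Your additional remarks on freeness and nondegeneracy are consistent with the paper's framework (which, as you note, treats \eqref{eq:symquot} as the \emph{definition} of the symplectic structure on $\mathfrak{P}(\Sigma)$).
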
 

We use Equation \eqref{eq:symquot} to define a symplectic structure on $\mathfrak{P}(\Sigma)$. 
\begin{remark}
If $\p\Sigma=\emptyset$, the second step is not needed, and we directly have 
\[ 	\mathfrak{P}(\Sigma)=\A^\sigma(P)\qu \Aut_\oz(P,\sigma).\]
\end{remark}

\begin{remark}
For the case $\p\Sigma=\emptyset$, Goldman \cite{gol:symaff} gives an alternative construction of the symplectic structure on 
$\mf{P}(\Sigma)$ as an infinite-dimensional symplectic quotient. It would be interesting to generalize Goldman's approach to the setting with nondegenerate boundary. 
\end{remark}


\section{Moment map property}\label{sec:momentmap}
Recall some notions from symplectic geometry.  A \emph{Lagrangian relation} $L\colon M_1\da M_2$ between symplectic manifolds $M_1,M_2$ is a Lagrangian submanifold $L\subset M_2\times M_1^-$, where the superscript indicates the opposite symplectic structure. The action of a symplectic groupoid $\S\rra B$ on a symplectic manifold $M$, along a
map $\Phi\colon M\to B$, is called \emph{Hamiltonian} if the action map is a Lagrangian relation 
\[ L\colon S\times M \da M.\] 
The notion of a Hamiltonian action of a symplectic groupoid is due to Mikami-Weinstein \cite{mik:mom}, who observed that ordinary Hamiltonian actions of Lie groups $G$ on symplectic manifolds are equivalent to Hamiltonian actions of the cotangent groupoid $T^*G\rra \g^*$. 

In Section \ref{subsec:groupoidaction}, we described a  `local' groupoid action of  $\S_3(\p\Sigma)\rra \ca{R}_3(\p\Sigma)$
on the deformation space $\mathfrak{P}(\Sigma)$ of projective structures 
along the map 
$\Psi\colon \mathfrak{P}(\Sigma)\to \ca{R}_3(\p\Sigma)$. This `local' action was defined in terms of 
a subgroupoid
\[ \L(\Sigma)\subset \mf{P}(\Sigma)\times (\S_3(\p \Sigma)\times \mf{P}(\Sigma))^-.\]
We will now show that the local groupoid action is Hamiltonian, in the sense that $\L(\Sigma)$ 
defines a Lagrangian relation.  We shall prove this by comparing with similar groupoids for the spaces $\M(P)$ and $\M^\sigma(P)$. 
Our strategy is described by the diagram 
\[ 
\begin{tikzcd}
[column sep={8.5em,between origins},
row sep={4em,between origins},]
\S(\p P)\times \M(P) \arrow[r,dashed,"\L(P)"]   & \M(P)\\
\S^{\p\sigma}(\p P)\times \M^\sigma(P) \arrow[u]\arrow[d]\arrow[r,dashed,"\L^\sigma(P)"]   & \M^\sigma(P)\ar[u]\ar[d]\\
\S_3(\p \Sigma)\times \mf{P}(\Sigma) \arrow[r,dashed,"\L(\Sigma)"]   &  \mf{P}(\Sigma)
\end{tikzcd}
\]
where the dotted arrows are Lagrangian relations.
In the diagram, $\L(P)$ is given by the action of the symplectic groupoid $\S(\p P)\rra \A(\p P)$ on $\M(P)$. 
The middle arrow is obtained from the top row by `restriction'. The bottom row is obtained from the  middle row by Drinfeld-Sokolov reduction.

\subsection{The groupoids $\S(Q),\S^{\wh{\tau}}(Q),\S_3(\CC)$}
Let $Q\to \CC$ be a principal $G$-bundle over an abstract circle. The space $\A(Q)$ of principal connections is an infinite-dimensional 
Poisson manifold, with symplectic leaves the orbits of the identity component of the gauge group $\Gau_\oz(Q)$.  
(An isomorphism $Q\cong S^1\times G$ identifies the gauge group with the loop group.)  
Its symplectic groupoid is the action groupoid  
\[ \S(Q)=\Gau_\oz(Q)\times \A(Q)\rra \A(Q),\]
with 2-form $\omega_\S$ obtained from its identification with the (twisted) cotangent bundle of $\Gau_\oz(Q)$.  (See \cite[Example A.12]{al:coad}.)  
It admits an alternative description in terms of a space 
$\P(Q)$ of \emph{quasi-periodic sections} of the pullback bundle $\wt{Q}\to \wt{\CC}$. Here $t\in \Gamma(\wt{Q})$ is 
called quasi-periodic if it satisfies $\kappa^*t=\mu(t)\cdot t$ for some \emph{monodromy} $\mu(t)\in G$.
The principal action of $G$ on $P$ induces an action of $G$ on quasi-periodic sections, with quotient map 
\[ p\colon \P(Q)\to \A(Q)\] 
taking $t\in\P(Q)$  to the unique connection $\theta$  for which $t$  is parallel. Let $\P^{[2]}(Q)$ consist of pairs $(t_1,t_0)$ of 
quasi-periodic sections that are related by isomonodromic homotopy. 
Equivalently, the unique gauge transformation taking $t_0$ to $t_1$ lies in $\Gau_\oz(Q)$. 
We hence have $\P^{[2]}(Q)\cong \Gau_\oz(Q)\times \P(Q)$, which descends to an identification 
\begin{equation}\label{eq:alternatedescription} \S(Q)\cong \P^{[2]}(Q)/G\rra \A(Q).\end{equation}
As shown in \cite{al:coad}, the space $\P(Q)$ carries a distinguished $\Aut(Q)\times G$-invariant 2-form $\varpi_\P$, with nice properties.  Furthermore, this 2-form establishes a Morita equivalence of $\S(Q)$ with a quasi-symplectic groupoid over the universal cover of $G$.  The following is a direct consequence of this result: 
\begin{proposition}
	The pullback of $\omega_\S$ to $\P^{[2]}(Q)$ is the difference 	$\pr_1^*\varpi_\P-\pr_2^*\varpi_P$, where 
	$\pr_i\colon \P^{[2]}(Q)\to \P(Q)$ are the two projections. 
\end{proposition}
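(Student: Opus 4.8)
The plan is to reduce the statement to the corresponding statement about the groupoid $\S_n(\CC) = \D_n^{[2]}(\CC)/G$ of quasi-periodic curves, for which the analogous identity $\pr_1^*\varpi_\D - \pr_2^*\varpi_\D$ descending to $\omega_\S$ was established in \cite{al:coad,kha:1}, together with the Morita-equivalence package for $\P(Q)$ cited just above the statement. First I would recall that the identification \eqref{eq:alternatedescription}, $\S(Q) \cong \P^{[2]}(Q)/G$, was obtained precisely by exhibiting $\P(Q)$ as a space carrying an $\Aut(Q)\times G$-invariant 2-form $\varpi_\P$ which implements a Morita equivalence between the symplectic groupoid $\S(Q)\rra \A(Q)$ and a quasi-symplectic groupoid over $\wt{G}$; the content of the proposition is simply spelling out what "Morita equivalence" means at the level of the 2-form $\omega_\S$.

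Concretely, I would argue as follows. The space $\P^{[2]}(Q) = \P(Q)\times_{\A(Q)\text{-comp}} \P(Q)$ sits over $\S(Q)$ via the quotient by the diagonal $G$-action, and simultaneously maps to $\A(Q)$ in two ways, $\sz = p\circ\pr_2$ and $\tz = p\circ\pr_1$ (source and target after descent). Under the identification $\P^{[2]}(Q)\cong \Gau_\oz(Q)\times\P(Q)$ (the unique gauge transformation carrying $t_0$ to $t_1$, which lies in $\Gau_\oz(Q)$), the $G$-action is on the $\P(Q)$-factor alone, and the quotient recovers $\S(Q)=\Gau_\oz(Q)\times\A(Q)$. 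The key step is the computation $d(\pr_1^*\varpi_\P - \pr_2^*\varpi_\P) = \pr_1^*(d\varpi_\P) - \pr_2^*(d\varpi_\P)$: since $\varpi_\P$ is the 2-form from the Morita datum, $d\varpi_\P$ is pulled back from the Cartan 3-form on $G$ via the monodromy map $\mu\colon\P(Q)\to G$ (this is exactly the relative-closedness built into the quasi-symplectic structure), and because $\pr_1$ and $\pr_2$ have the \emph{same} monodromy on $\P^{[2]}(Q)$ — that is the definition of isomonodromic homotopy — the two pullbacks of $d\varpi_\P$ cancel. Hence $\pr_1^*\varpi_\P - \pr_2^*\varpi_\P$ is closed, and a direct check on tangent vectors (using that $\varpi_\P$ is $G$-invariant and that its contraction with the generating vector fields of the $G$-action is controlled by $\mu$, again from the Morita data) shows it is $G$-basic, hence descends to a closed 2-form on $\S(Q)$. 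Finally I would verify that this descended 2-form satisfies the axioms characterizing $\omega_\S$ — multiplicativity over $\A(Q)$ and nondegeneracy, equivalently that it agrees with the canonical (twisted) cotangent 2-form on $\Gau_\oz(Q)\cong T^*\Gau_\oz(Q)|_{\text{level}}$ under \eqref{eq:alternatedescription} — which again is part of what \cite{al:coad} proves, so that uniqueness of the multiplicative symplectic form with the correct infinitesimal data forces the two to coincide.

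I expect the main obstacle to be bookkeeping rather than a conceptual difficulty: one must be careful that $\varpi_\P$ is only invariant and relatively closed (not closed or basic on its own), so neither pullback $\pr_i^*\varpi_\P$ individually descends, and the cancellations of the $G$-moment-map terms and of the $\mu$-pulled-back Cartan 3-form happen only after forming the \emph{difference} and restricting to the fiber product where both projections share a monodromy. Making this precise requires unwinding the explicit formula for $\varpi_\P$ from \cite{al:coad,kha:1} and tracking how the $G$-action and the monodromy interact — in particular identifying the "anomaly" 1-forms $\iota_{\xi_\P}\varpi_\P$ for $\xi\in\g$ and confirming they are equal for $\pr_1$ and $\pr_2$ on $\P^{[2]}(Q)$. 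Once that matching is in place, the descent and the identification with $\omega_\S$ are formal.
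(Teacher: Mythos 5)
Your proposal is correct and follows essentially the same route as the paper, which states this proposition as a direct consequence of the Morita-equivalence package for $(\P(Q),\varpi_\P)$ established in \cite{al:coad}: the identity $\d\varpi_\P=q^*\eta$ together with invariance and the moment-map condition for the $G$-action give closedness and basicness of $\pr_1^*\varpi_\P-\pr_2^*\varpi_\P$ on the isomonodromic fiber product, and the descended form is identified with $\omega_\S$ exactly as you outline. The paper offers no further argument beyond the citation, so your more detailed unwinding of the descent is consistent with, and fills in, what the paper leaves implicit.
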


Given $\tau\colon Q\to S^2$ with a lift $\wh{\tau}\colon Q\to \on{Fl}^+(\R^3)$, we can consider a related space 
$\P^{\wh{\tau}}(Q)$ of quasi-periodic sections such that the corresponding connection $\theta$ is $\wh\tau$-positive. (Cf. Definition \ref{def:taupositive}.)  
Let $\P^{\wh\tau,[2]}(Q)$ consist of pairs of such sections, which furthermore are related by an isomonodromic homotopy 
of such sections. Then 
\begin{equation}\label{eq:alternatedescription1} \S^{\wh{\tau}}(Q)\cong \P^{\wh\tau,[2]}(Q)/G\rra \A^{\wh{\tau}}(Q)\end{equation}
is a subgroupoid of $\S(Q)$. Since $\wh{\tau}$-positivity is an open condition, this subgroupoid is open in $\S(Q)$, and in particular is symplectic. The group $\Gau_\oz(Q,\tau)$ acts on $\A^{\wh{\tau}}(Q)$, with quotient $\ca{R}_3(\Sigma)$. Likewise, the quotient of $\S^{\wh{\tau}}(Q)$ under the action of $\Gau_\oz(Q,\tau)\times \Gau_\oz(Q,\tau)$ is the symplectic groupoid 
$\ca{S}_3(\Sigma)\rra \ca{R}_3(\Sigma)$. Similar to the discussion before Proposition \ref{prop:shiftingtrick}, the symplectic structure does not descend, but instead is obtained via the shifting trick. That is, 
\begin{equation}
 \ca{S}_3(\Sigma)=\big(\S^{\wh{\tau}}(Q)\times (\O^-\times \O)\big)\qu \big(\Gau_\oz(Q,\tau)\times \Gau_\oz(Q,\tau)\big).
\end{equation}
(In this last equality, we may replace $\S^{\wh{\tau}}(Q)$ with $\S(Q)$.) For more details on this construction of the symplectic groupoid, see \cite{kha:1}.

\subsection{The (local) groupoid actions}
Returning to our setting from Section \ref{sec:symplectic}, consider a compact oriented surface $\Sigma$ with boundary, a principal $G$-bundle $P\to \Sigma$, a developing section $\sigma\colon P\to S^2$, and a lift 
$\wh{\p\sigma}$ of the boundary restriction such that $e(P,\sigma)=\chi(\Sigma)$. We also fix a universal cover 
$\wt{\Sigma}\to \Sigma$ with group $\Gamma$ of deck transformations; the pullback bundle is denoted $\wt{P}\to \wt{\Sigma}$.  

The symplectic groupoid
\[ \S(\p P)=\Gau_\oz(\p P)\times \A(\p P)\rra \A(\p P)\] 
is  the direct product over all $\S(\p P|_{\CC_i})\rra \A(\p P|_{\CC_i})$ as $\CC_i$ ranges over all components of $\p\Sigma$. 
As in \eqref{eq:alternatedescription}, it may be described in terms of pairs $(t_1,t_0)$ of $\Gamma$-quasi-periodic sections 
$t\colon \p \wt\Sigma\to \p\wt{ P}$. 

The action of $\Gau_\oz(\p P)$ on the moduli space $\M(P)$ is equivalent to  an action of the symplectic groupoid $\S(\p P)$. 
In particular, its graph defines a Lagrangian subgroupoid
\[ \L(P)\subset \M(P)\times \big(\S(\p P)\times \M(P)\big)^-.\]
It is convenient to regard  $\M(P)$ as the quotient, under the action of $\Gau(P,\p P)\times G$, of the space of $\Gamma$-quasi-periodic sections 
$s\colon \wt{\Sigma}\to \wt{P}$. Letting $[s]$ denote the equivalence class of such a section, the subgroupoid $\L(P)$ 
consists of all 
\begin{equation}\label{eq:equivalenceclasses}
([s_1],[(t_1,t_0)],[s_0])
\end{equation}
such that $s_0$ is homotopic to $s_1$ with fixed monodromy, and $t_0,t_1$ are the 
boundary values of $s_0,s_1$. 

The space $\M^\sigma(P)$ (cf. \eqref{eq:fiberproduct}) can similarly be described as 
equivalence classes of sections $s$, with the additional requirement that $s$ is $\sigma$-positive in the sense that 
the connection corresponding to $s$ is 
$\sigma$-positive. As in the previous section, define the (open) symplectic subgroupoid
$\S^{\wh{\p\sigma}}(\p P)\subset \S(\p P)$. Its `local' groupoid action on $\M^\sigma(P)$ is given by the
subgroupoid
\begin{equation}
\L^\sigma(P)\subset \M^\sigma(P)\times \big(\S^{\wh{\p\sigma}}(\p P)\times \M^\sigma(P)\big)^-,
\end{equation}
consisting of equivalence classes \eqref{eq:equivalenceclasses}, but with $s_0,s_1$ being $\sigma$-positive, and the isomonodromic homotopy between them being through $\sigma$-positive  sections. From this description it is evident that the local symplectomorphisms $\M^\sigma(P)\to \M(P)$ and $\S^{\wh{\p\sigma}}(\p P)\subset \S(\p P)$ restrict to a 
map, locally 1-1, $\L^\sigma(P)\to \L(P)$. In particular, $\L^\sigma(P)$ is Lagrangian. 

\begin{theorem}
The subgroupoid 
	\begin{equation}\label{eq:lsigma}
	\L(\Sigma)\subset \wh{\mf{P}}(\Sigma)\times \big(\S_3(\p\Sigma)\times \wh{\mf{P}}(\Sigma)\big)^- 
	\end{equation}
is Lagrangian. That is, $\L(\Sigma)$ defines a (local) Hamiltonian action of a symplectic 
groupoid.  
\end{theorem}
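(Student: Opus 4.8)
The plan is to obtain the Lagrangian property of $\L(\Sigma)$ by transport through the two-step reduction that defines $\mf{P}(\Sigma)$, using the diagram preceding the theorem. We already know that $\L(P)$ is Lagrangian (it is the graph of a Hamiltonian action of the symplectic groupoid $\S(\p P)\rra \A(\p P)$ on the symplectic manifold $\M(P)$, by the Mikami--Weinstein framework and the standard fact that a gauge-group action on a moduli space of flat connections is Hamiltonian for the associated symplectic groupoid). We also know from the discussion just before the theorem that $\L^\sigma(P)$ is Lagrangian: it maps to $\L(P)$ by a locally $1$--$1$ map which is, on each factor, a local symplectomorphism (Proposition \ref{lem:hatdef} for $\M^\sigma(P)\to\M(P)$, and openness of $\wh\tau$-positivity for $\S^{\wh{\p\sigma}}(\p P)\subset\S(\p P)$), and the Lagrangian condition is local. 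So the content of the theorem is precisely that the bottom row of the diagram, obtained from the middle row by simultaneous Drinfeld--Sokolov reduction on all boundary circles, is again Lagrangian.

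First I would set up the reduction carefully. Write $\K=\Gau_\oz(\p P,\p\sigma)$ for the residual boundary gauge group acting on $\M^\sigma(P)$, with moment map the composite $\M^\sigma(P)\to\A(\p P)\to\A(\p P)/\on{ann}(\gau(\p P,\p\sigma))$ landing in the single coadjoint orbit $\O$ of \eqref{eq:O}. By Proposition \ref{prop:shiftingtrick} we have $\mf{P}(\Sigma)=(\M^\sigma(P)\times\O^-)\qu\K$, and likewise, by the displayed formula for $\ca S_3(\p\Sigma)$, the source groupoid is $\S_3(\p\Sigma)=(\S^{\wh{\p\sigma}}(\p P)\times(\O^-\times\O))\qu(\K\times\K)$. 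The relation $\L^\sigma(P)$ sits inside $\M^\sigma(P)\times(\S^{\wh{\p\sigma}}(\p P)\times\M^\sigma(P))^-$, and one checks from the explicit description \eqref{eq:equivalenceclasses} that it is invariant under the diagonal $\K\times\K\times\K$-action and that its image under the three boundary-restriction maps lands in the ``matched'' submanifold of $\O\times(\O^-\times\O)\times\O^-$ where the four orbit-coordinates agree pairwise in the way dictated by source/target compatibility. I would then invoke the general principle of \emph{reduction of Lagrangian relations / Lagrangian subgroupoids}: if a symplectic groupoid $\S\rra B$ acts Hamiltonianly on $M$, and $\K$ is a group acting freely and Hamiltonianly on everything compatibly with all the structure maps, with moment map image in a single coadjoint orbit, then the reduced action map $\S\qu\K \times M\qu\K \da M\qu\K$ is again a Lagrangian relation. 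Concretely, $\L(\Sigma)$ is the image of $(\L^\sigma(P)\times(\text{matched part of }\O\times\O^-\times\O\times\O^-))\cap(\text{zero level})$ under the reduction quotient, and one verifies it is coisotropic and isotropic by the usual level-set/quotient bookkeeping, reducing $\dim$ and using that $\L^\sigma(P)$ was Lagrangian and the $\K$-actions are free.

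The technical heart — and the step I expect to be the main obstacle — is the careful verification that the two reductions (the boundary-gauge reduction producing $\mf{P}(\Sigma)$, and the corresponding reduction producing $\ca S_3(\p\Sigma)$) are \emph{compatible} with the relation $\L^\sigma(P)$, i.e.\ that the null directions match up and no new degeneracy or codimension defect is introduced. Since $\K=\Gau_\oz(\p P,\p\sigma)$ is infinite-dimensional and all the spaces in sight are infinite-dimensional manifolds (some possibly non-Hausdorff), ``Lagrangian'' must be interpreted in the weak sense appropriate to this paper (isotropic and coisotropic for the weak symplectic forms, matching \cite{al:symteich,kha:1}), and the freeness of the $\K$-action on the relevant level sets must be used to guarantee that the quotient is again a (local) manifold along which the reduced $2$-form is defined. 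I would handle this exactly as in \cite{al:symteich}: pull the whole situation back to the level of $\Gamma$-quasi-periodic sections $s\colon\wt\Sigma\to\wt P$, where $\L^\sigma(P)$, the $\K$-actions, and the moment maps are all given by transparent formulas (homotopy with fixed monodromy, restriction to the boundary, Drinfeld--Sokolov normal form on each boundary circle), and check the three conditions — invariance, matched moment-map image, and the dimension/null-direction count — by hand in that presentation. The identification $\O=\A^{\p\sigma\text{-reg}}(\p P)/\on{ann}(\gau(\p P,\p\sigma))$ from Proposition \ref{prop:singleorbit} is what makes the shifting trick applicable on the groupoid side as well as the base side, and I would flag that this single-orbit statement (already proved) is precisely what prevents the reduction from producing the spurious global obstructions that arise for non-convex structures.
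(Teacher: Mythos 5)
Your proposal follows essentially the same route as the paper: the paper likewise establishes that $\L^\sigma(P)$ is Lagrangian via the locally $1$--$1$ comparison with $\L(P)$, observes that $\L(\Sigma)$ is the image of $\L^\sigma(P)$ under the quotient maps $\M^\sigma(P)\to\mf{P}(\Sigma)$ and $\S^{\wh{\p\sigma}}(\p P)\to\S_3(\p\Sigma)$ viewed as symplectic reductions at the orbit $\O$, and concludes. Your write-up simply spells out the reduction-of-Lagrangian-relations bookkeeping that the paper leaves implicit.
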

\begin{proof}
The space $\L(\Sigma)$ was 	described in Section \ref{subsec:groupoidaction}
 in terms of equivalence classes $[(\phi_1,\phi_0)]$ of developing maps
$\wt{\Sigma}\to S^2$. Every such developing map comes from a $\sigma$-positive quasi-periodic section 
$s\colon \wt{\Sigma}\to \wt{P}$; two sections give the same developing map if they are related by 
$\Gau(P,\sigma)$. Consequently, $\L(\Sigma)$ is just the image of $\L^\sigma(P)$ under the quotient maps 
\[ \M^\sigma(P)\to {\mf{P}}(\Sigma),\ \ \ \S^{\wh{\p\sigma}}(\p P)\to \S_3(\p\Sigma).\]
As we saw, these quotient maps can also be seen as reductions with respect to the orbit $\O$. We conclude 
that $\L(\Sigma)$ is Lagrangian. 
\end{proof}

\section{Goldman twists}\label{sec:goldmantwists}
Suppose $\lambda\subset \Sigma$ is an oriented simple loop, not homotopic to a boundary 
component. Associated with $\lambda$ is the $A$-action (twist) on the space $c\mathfrak{P}(\Sigma)$, as described in Section
\ref{subsec:twists}. For the case without boundary, it was shown in \cite{gol:con} and \cite{kim:sym} that the $A$-action is 
Hamiltonian, with moment map the logarithm of the holonomy 
\[ \on {Hol}_\lambda\colon c\mathfrak{P}(\Sigma)\to A_+.\]
(The holonomy is defined up to conjugacy, but each positive hyperbolic element is conjugate to a unique element of $A_+$). 
We will verify this fact for our description of the symplectic structure, with a possibly non-empty boundary. We shall use 
\[ \xi_1=\on{diag}(-\f{1}{2},0,\f{1}{2}),\ \ \xi_2=\on{diag}(-\f{1}{3},\f{2}{3},-\f{1}{3})\]
as a basis for the Lie algebra $\mf{a}=\on{Lie}(A)$. Let 
$\on{diag}(e^{c_1},e^{c_2},e^{c_3})\in A_+$ be the monodromy around $\lambda$. 

\begin{theorem}\label{th:goldmantwist}
	Given a simple loop $\lambda$ in $\Sigma$, not homotopic to a boundary component, the function 
	\[ H_i=\on{tr}(\log(\on{Hol}_\lambda)\xi_i)\colon c\mathfrak{P}(\Sigma)\to \R\]
	is a Hamiltonian for the twist flow given by $\xi_i$. 
\end{theorem}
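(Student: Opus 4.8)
The plan is to compute the Hamiltonian vector field of $H_i$ using the description of the symplectic structure via reduction, and match it with the infinitesimal Goldman twist. First, I would work upstream of all the quotients: by Proposition \ref{prop:shiftingtrick}, $\mathfrak{P}(\Sigma)$ is a symplectic quotient of $\M^\sigma(P)\times\O^-$, and by Proposition \ref{lem:hatdef} together with the construction in Section \ref{sec:symplectic}, tangent vectors to $c\mathfrak{P}(\Sigma)$ at a given projective structure are represented by pairs $(\alpha,\dot\xi)$ where $\alpha\in\Omega^1(\Sigma,\g(P))$ is a variation of a $\sigma$-positive flat connection $\theta$ satisfying the linearized flatness equation $\d_\theta\alpha=0$, modulo the image of $\d_\theta$ applied to sections of $\At(P,\sigma)$ vanishing on $\p\Sigma$, plus the coisotropic reduction along the boundary orbit $\O$. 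The Atiyah-Bott pairing is $\int_\Sigma\alpha\cdot\beta$, plus the contribution from $\O$, which I would keep track of but expect to be irrelevant here since $\lambda$ is not homotopic to a boundary component.

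Next I would express the holonomy function concretely. Cutting $\Sigma$ along $\lambda$ (or picking a tubular neighborhood modeled on the principal annulus, Example \ref{ex:principalannulus}), the monodromy of $\theta$ around $\lambda$ is computed by parallel transport, and $\log\on{Hol}_\lambda$, normalized to lie in $\mathfrak{a}$, gives $H_i=\on{tr}(\log(\on{Hol}_\lambda)\,\xi_i)$. The variation of holonomy under a variation $\alpha$ of the flat connection is the standard formula: $\delta\on{Hol}_\lambda$ is an integral of $\alpha$ along $\lambda$ conjugated by partial parallel transports. Differentiating the trace and using $\Ad$-invariance of the trace form together with the fact that $\xi_i$ commutes with $\on{Hol}_\lambda$ (both diagonal), this collapses to $\langle dH_i,\alpha\rangle=\oint_\lambda \alpha\cdot \xi_i$, i.e.\ pairing the restriction of $\alpha$ along $\lambda$ with the constant section $\xi_i$ of $\g(P)|_\lambda$ (using the flat trivialization near $\lambda$ in which the holonomy is in $A_+$).

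The crux is then to identify the tangent vector $\beta$ on $c\mathfrak{P}(\Sigma)$ satisfying $\iota_\beta\omega = dH_i$, and show it equals the infinitesimal twist by $\xi_i$. The infinitesimal twist, by the description in Section \ref{subsec:twists} using the cut-off diffeomorphism $\mathsf{F}$ built from the $A$-action on the principal annulus, is represented by a variation $\beta$ of the connection supported in a collar of $\lambda$, of the form $\d_\theta(f\cdot\xi_i)$ where $f$ is a bump function jumping from $0$ to $1$ across $\lambda$; this is a "large gauge transformation" that is not globally exact, so defines a nonzero class. Plugging this $\beta$ into the Atiyah-Bott form against an arbitrary linearized-flat $\alpha$ and integrating by parts, $\int_\Sigma \d_\theta(f\xi_i)\cdot\alpha = -\int_\Sigma f\,\xi_i\cdot\d_\theta\alpha + (\text{boundary of collar terms}) = \oint_\lambda \xi_i\cdot\alpha$, using $\d_\theta\alpha=0$ and the jump of $f$ across $\lambda$. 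This matches $\langle dH_i,\alpha\rangle$ from the previous step, so $\iota_\beta\omega=dH_i$, completing the proof.

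I expect the main obstacle to be bookkeeping of the boundary and quotient corrections: one must check that the twist vector $\beta=\d_\theta(f\xi_i)$, which naively looks like a reducible direction, descends to a genuine nonzero tangent vector on $c\mathfrak{P}(\Sigma)$ (this is exactly why $\lambda$ must not be homotopic to a boundary component and why $f$ cannot be chosen globally locally constant), and that the pairing receives no spurious contribution from the $\O$-factor in the shifting-trick quotient \eqref{eq:symquot} nor from the boundary terms in the Atiyah-Bott moment map \eqref{eq:gaugemoment}. Since the collar of $\lambda$ can be taken disjoint from $\p\Sigma$, and the variation is compactly supported there, I expect these corrections to vanish identically, but verifying this cleanly against Definitions \ref{def:positive} and \ref{def:taupositive} is where the care lies. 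A secondary point is confirming that the normalization of $\log\on{Hol}_\lambda$ into $\mathfrak{a}$ (rather than an arbitrary conjugate) is the one compatible with the flat trivialization used in the holonomy-variation formula, so that $\xi_i$ really is parallel along $\lambda$; this is where the choice $c_1<c_2<c_3$ and the specific basis $\xi_1,\xi_2$ of $\mathfrak a$ enter.
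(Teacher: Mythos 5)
Your proposal is correct and follows essentially the same route as the paper: both reduce to a local computation in a collar of $\lambda$ modeled on the principal annulus, identify the infinitesimal twist with the cut-off coboundary $\d_\theta(\chi\,\hat\xi_i)$ (your $\alpha_i$ is exactly the paper's explicit \eqref{eq:Aforshear}, whose ``lower triangular'' correction to $\xi_i\chi'(y)\d y$ is precisely the discrepancy between the constant $\xi_i$ and the parallel section you flag in your secondary point), and pair it against an arbitrary variation to recover $\d H_i$. The only cosmetic difference is that the paper normalizes the arbitrary tangent vector so that the holonomy derivative can be read off directly from the $\d x$-component along $y=0$, rather than invoking the general variation-of-holonomy formula plus Stokes as you do.
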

Writing $\on{Hol}_\lambda=(e^{c_1},e^{c_2},e^{c_3})\in A_+$ with 
$ c_1<c_2<c_3,\ \sum c_i=0$, we have 
\[ H_1=\hh (c_3-c_1),\ H_2=c_2.\]
This agrees with \cite[Section 1.8]{gol:con} up to normalization. 

To prove the theorem, fix a pair $(P,\sigma)$ and list $\wh{\p\sigma}$ for the construction of the symplectic structure. 
Near $\lambda$, we may choose a trivialization of $P$ such that $\sigma$ is given by 
$g^{-1}\cdot (1\colon 0\colon 0)$. These data allow us to describe projective structures, near $\lambda$,  
by connections 1-form $\mathsf{A}$. A convex projective structure having $\lambda$ as a closed geodesic is modelled
by the principal annulus, with connection 1-form $\mathsf{A}$ given by Equation  \eqref{eq:Aforannulus}. We shall need 
connection 1-forms describing the images of this projective structure under the twist flows.

Fix  $\epsilon>0$, and 
let $\chi\in C^\infty(\R)$ be a function that is equal to $0$ for $y\le -\epsilon$ and equal to $1$ for 
$y\ge \epsilon$. The maps 
\[ \mathsf{F}_i(t)\colon (x,y)\mapsto \exp(t \chi(y)\xi_i))\cdot (x,y)\] 
are diffeomorphisms for $|t|$ sufficiently small, and represent twists by $\exp(t\xi_i)\in A$. 
\begin{lemma}
	The pullback of the projective structure under these diffeomorphisms corresponds to connection 1-forms 
	$\mathsf{A}_i(t)=\mathsf{A}+t\alpha_i$ with 
	\begin{equation}\label{eq:Aforshear}
	\alpha_1= 
	\left(\xi_1+\left(\begin{array}{ccc}0&0&0\\
	-\f{1}{2}y
	&0&0\\ 1&0&0\end{array}\right)
	\right)\chi'(y)\d y,\ \ \ \ \ 
	\alpha_2=
	\left(\xi_2+\left(\begin{array}{ccc}0&0&0\\
	-y&0&0\\0&0&0\end{array}\right)
	\right)
	\chi'(y)\d y.
	\end{equation}
\end{lemma}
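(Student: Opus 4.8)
The plan is to write the developing map of the pulled-back structure explicitly, then bring the resulting triple back to the standard gauge and read off the connection $1$-form. On the universal cover $\R^2$ of the principal annulus the pullback of the projective structure by $\mathsf{F}_i(t)$ has developing map $\varphi\circ\mathsf{F}_i(t)$, where $\varphi(x,y)=\psi(x,y)\cdot(1\colon 0\colon 0)$ with $\psi(x,y)=\diag(e^{xc_1},e^{xc_2},e^{xc_3})\,u(y)$ and $u(y)=\left(\begin{smallmatrix}1&0&0\\-y&1&0\\1&0&1\end{smallmatrix}\right)$, as in Example~\ref{ex:principalannulus}. (Note $\mathsf{F}_i(t)$ descends to the annulus: the $A$-action commutes with the deck transformation $(x,y)\mapsto(x+1,y)$, both elements of the abelian group $A$, and $\chi$ depends only on $y$.) Since the twist $A$-action on the annulus is by construction the one transported from the $A$-action on $S^2$ via the equivariance of $\varphi$, treating $\exp(t\chi(y)\xi_i)$ as a fixed element of $A$ at each point gives
\[ (\varphi\circ\mathsf{F}_i(t))(x,y)=\exp\!\big(t\chi(y)\xi_i\big)\cdot\varphi(x,y)=\psi_i(t)(x,y)\cdot(1\colon 0\colon 0),\qquad \psi_i(t):=\exp\!\big(t\chi(y)\xi_i\big)\,\psi(x,y). \]

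Next I would re-trivialize. The first column of $\psi_i(t)$ is still a positive multiple of $e_1$, so a gauge transformation by $\psi_i(t)^{-1}$ returns the developing section to the standard form $\sigma(m,g)=g^{-1}\cdot(1\colon 0\colon 0)$, and, exactly as for the tautological and principal-annulus examples above, the associated connection $1$-form becomes $\mathsf{A}_i(t)=\psi_i(t)^{-1}\,\d\psi_i(t)$. Expanding this Maurer--Cartan form and using that $\xi_i\in\mathfrak{a}$ commutes with itself, so that $\exp(-t\chi\xi_i)\,\d\!\exp(t\chi\xi_i)=t\,\chi'(y)\,\xi_i\,\d y$, yields
\[ \mathsf{A}_i(t)=\psi^{-1}\d\psi+t\,\chi'(y)\,\psi^{-1}\xi_i\psi\,\d y=\mathsf{A}+t\,\chi'(y)\,\big(u(y)^{-1}\xi_i\,u(y)\big)\,\d y, \]
the last equality because $\xi_i$ commutes with the diagonal factor $\diag(e^{xc_j})$, so $\psi^{-1}\xi_i\psi=u(y)^{-1}\xi_i\,u(y)$ does not depend on $x$. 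Hence $\alpha_i=\chi'(y)\,\big(u(y)^{-1}\xi_i\,u(y)\big)\,\d y$.

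Finally I would carry out the two elementary conjugations, using $u(y)^{-1}=\left(\begin{smallmatrix}1&0&0\\y&1&0\\-1&0&1\end{smallmatrix}\right)$: a short matrix multiplication gives $u(y)^{-1}\xi_1 u(y)=\xi_1+\left(\begin{smallmatrix}0&0&0\\-\tfrac12 y&0&0\\1&0&0\end{smallmatrix}\right)$ and $u(y)^{-1}\xi_2 u(y)=\xi_2+\left(\begin{smallmatrix}0&0&0\\-y&0&0\\0&0&0\end{smallmatrix}\right)$, which are precisely the matrices displayed in \eqref{eq:Aforshear}. I expect the only genuinely non-mechanical point to be the identity $(\varphi\circ\mathsf{F}_i(t))(x,y)=\exp(t\chi(y)\xi_i)\cdot\varphi(x,y)$: it rests on the twist being, by definition, the deformation produced by transporting the $A$-action from $S^2$ via $\varphi$, and on the observation that the point-dependent group element $\exp(t\chi(y)\xi_i)$ may be treated as a fixed element of $A$ at each $(x,y)$. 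The rest is bookkeeping.
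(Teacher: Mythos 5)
Your proposal is correct and arrives at exactly the displayed formulas; I verified the conjugation $u(y)^{-1}\xi_i\,u(y)$ and it reproduces \eqref{eq:Aforshear}. The route is the same in substance as the paper's (pull back, then re-gauge to the standard trivialization), but organized differently in a way that is arguably cleaner: the paper computes $\mathsf{F}_i(t)^*\mathsf{A}$ and then exhibits explicit compensating gauge transformations $\exp(-bt\chi\xi_2)$, resp.\ $\exp(-t\chi\xi_2)$, whereas you work at the level of the developing map, using $\varphi\circ\mathsf{F}_i(t)=\exp(t\chi(y)\xi_i)\cdot\varphi$ to get the factorization $\psi_i(t)=\exp(t\chi(y)\xi_i)\psi$ and then simply reading off the Maurer--Cartan form $\psi_i(t)^{-1}\d\psi_i(t)=\mathsf{A}+t\chi'(y)\,u(y)^{-1}\xi_i u(y)\,\d y$. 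This makes the correct gauge representative automatic rather than something to be found, at the cost of having to justify the pointwise-equivariance identity, which you do correctly (the group element $\exp(t\chi(y)\xi_i)$ is evaluated at the source point, so ordinary $A$-equivariance of $\varphi$ applies). One sentence should be repaired: ``the first column of $\psi_i(t)$ is still a positive multiple of $e_1$'' is not what you mean (that column is the lift of the new developing map, not a multiple of $e_1$); the relevant fact is that $\psi_i(t)\cdot(1\colon 0\colon 0)=\varphi\circ\mathsf{F}_i(t)$, so gauging by $\psi_i(t)^{-1}$ returns $\sigma$ to the constant form $g^{-1}\cdot(1\colon 0\colon 0)$. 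This is a wording slip, not a gap.
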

\begin{proof}
	The expressions are computed as $\mathsf{F}_i(t)^*\mathsf{A}$, followed by a  gauge transformation, arranging that 
		$\mathsf{A}_i(t)$ agrees  with $\mathsf{A}$ for $|y|\ge \eps$. The calculation shows that we may take 
	\[ \mathsf{A}_1(t)=\exp(-bt \chi \xi_2)\bullet \mathsf{F}_1(t)^*\mathsf{A},\ \ 
	\mathsf{A}_2(t)=\exp(-t \chi \xi_2)\bullet \mathsf{F}_2(t)^*\mathsf{A}\]
	where $b=\f{1}{2}-\f{c_2-c_1}{c_3-c_1}$. 
\end{proof}

Observe that $\alpha_i$ do not depend on the $c_i$, and that they  
have the special form $\alpha_i=\xi_i\chi'(y)\d y+\mbox{lower triangular}$.

\begin{proof}[Proof of Theorem \ref{th:goldmantwist}]
	The claim is that 
	\begin{equation}\label{eq:mommap}
	-\iota(\xi_i^\sharp)\omega=\d H_i
	\end{equation}
	where the vector field $\xi_i^\sharp$ on $c\mathfrak{P}(\Sigma)$ 
	is the generator of the twist flow defined by $\xi_i$.  	We shall verify this identity at a given convex projective structure, represented in terms of a $\sigma$-positive connection $\theta$.
	The twist flow may be realized in terms of a change of $\theta$ that is localized near the given geodesic $\lambda$. We may hence 
	work in the local model, given by the principal annulus, with the trivial $G$-bundle and $\sigma$ in its standard form, and with the connection 1-form $\mathsf{A}$ given by \eqref{eq:Aforannulus}, and with the twist flow described by the forms 
	$\alpha_i$ from \eqref{eq:Aforshear}. 
	
	Consider now an arbitrary tangent vector to $c\mf{P}(\Sigma)$ at the given projective structure. It is 
	represented by a 1-parameter family of projective structures, and hence is given 
	near $\lambda$ by a 1-parameter 
	family of connection 1-forms $\mathsf{A}_t$, with $\mathsf{A}_0=\mathsf{A}$. We may assume that $\lambda$ (i.e., the submanifold $y=0$) remains a geodesic for all $t$, and that the pullback of $\mathsf{A}_t$ to the submanifold $y=0$ has the standard form  given by the first term in \eqref{eq:Aforannulus}, with the $c_i$ depending on $t$.  Thus, 
	\[ \log\on{Hol}_\lambda(A_t)=\on{diag}(c_1(t),c_2(t),c_3(t))\in\mf{a}.\]
	%
	The $t$-derivative of the family of connections $\mathsf{A}_t$ is of the form 
	\[ \f{d}{d t}|_{t=0}\mathsf{A}_t=(\zeta+\mbox{lower triangular}) \d x+ [\ldots]\d y.\]
	where $\zeta=\f{d}{d t}|_{t=0} \on{diag}(c_1(t),c_2(t),c_3(t))$. Hence, 
	\[ \int \on{tr}(\alpha_i \f{d}{d t}|_{t=0}\mathsf{A}_t)=
	-\int \on{tr}(\xi_i \zeta)\chi'(y)\d x\d y=-\on{tr}(\xi_i \zeta)
	=-\f{d}{d t}|_{t=0} \on{tr}(\xi_i \log\on{Hol}_\lambda(A_t))
	.\]
	This verifies the moment map property \eqref{eq:mommap}. 
\end{proof}

\bibliographystyle{amsplain} 

\def\cprime{$'$} \def\polhk#1{\setbox0=\hbox{#1}{\ooalign{\hidewidth
			\lower1.5ex\hbox{`}\hidewidth\crcr\unhbox0}}} \def\cprime{$'$}
\def\cprime{$'$} \def\cprime{$'$} \def\cprime{$'$} \def\cprime{$'$}
\def\polhk#1{\setbox0=\hbox{#1}{\ooalign{\hidewidth
			\lower1.5ex\hbox{`}\hidewidth\crcr\unhbox0}}} \def\cprime{$'$}
\def\cprime{$'$} \def\cprime{$'$} \def\cprime{$'$} \def\cprime{$'$}
\providecommand{\bysame}{\leavevmode\hbox to3em{\hrulefill}\thinspace}
\providecommand{\MR}{\relax\ifhmode\unskip\space\fi MR }
\providecommand{\MRhref}[2]{%
	\href{http://www.ams.org/mathscinet-getitem?mr=#1}{#2}
}
\providecommand{\href}[2]{#2}

\end{document}